\newlist{encase}{enumerate}{1}
\setlist[encase]{label=Case \arabic*, leftmargin=2cm}
\numberwithin{equation}{section}
\newtheorem{theorem}{Theorem}[section]
\newtheorem{lemma}[theorem]{Lemma}
\newtheorem{proposition}[theorem]{Proposition}
\newtheorem{definition}[theorem]{Definition}
\newtheorem{remark}{Remark}
\renewenvironment{proof}[1][\proofname]{\par
  \pushQED{\qed}%
  \normalfont \topsep6\p@\@plus6\p@\relax
  \trivlist
  \item[\hskip\labelsep
        \bfseries
    #1\@addpunct{.}]\ignorespaces
}{%
  \popQED\endtrivlist\@endpefalse
}
\newsavebox{\measure@tikzpicture}
  \def\tikz@width{#1}%
  \def\tikzscale{1}\begin{lrbox}{\measure@tikzpicture}%
  \edef\tikzscale{\pgfmathresult}%
\title{Multivariate multifractal analysis of  L\'evy functions. Part II: Validity of the multifractal formalism}
\author[1]{St\'ephane Jaffard \thanks{stephane.jaffard@u-pec.fr}}
\author[2]{Lingmin Liao \thanks{lmliao@whu.edu.cn}}
\author[1]{Qian Zhang \thanks{qian.zhang@u-pec.fr}}
\affil[1]{{\small{Université Paris Est Créteil, Université Gustave Eiffel, CNRS, LAMA UMR8050, F-94010 Créteil, France}}}
\affil[2]{{\small{School of Mathematics and Statistics, Wuhan University, Wuhan, China}}}
\begin{document}
\maketitle
\begin{abstract}
In this article, we determine the multivariate multifractal Legendre spectra of shifted L\'evy functions.
%which are derived when the multiresolution quantities used are defined by oscillations. 
This allows us to explore how the validity of the multivariate multifractal formalism depends on the shift parameter. This article is a continuation of \cite{jaffardliaoqian} where the corresponding bivariate multifractal spectra were explored. 
%: we show how this validity may depend on the shift parameter.
\\{\textbf{Keywords}:} {multifractal Legendre spectrum, multifractal formalism, L\'evy function, $n$-th order oscillations}
\end{abstract}

%%%%%%%%%%%%%%%%%%%%%%%%%%%%%%%%%%%%%%%%%%%%%%%%%%%%%%%%%%%%%%%%%%%%%%%%%%%%%%%%%%%%%%%%%%%%%%%%%%
\section{Introduction}
%%%%%%%%%%%%%%%%%%%%%%%%%%%%%%%%%%%%%%%%%%%%%%%%%%%%%%%%%%%%%%%%%%%%%%%%%%%%%%%%%%%%%%%%%%%%%%%%%%
This is the second part of 2-parts papers concerning the bivariate multifractal analysis of shifted L\'evy functions. 
The discussion begins by recalling the definition of these functions.  Consider first the $1$-periodic 'saw-tooth' function defined by 
\begin{equation}\nonumber
\begin{aligned}
\{x\}=
\left\{
             \begin{array}{lr}
            x-\lfloor x \rfloor - \frac{1}{2},&\quad \text{if}\ x\ \notin\ \mathbb{Z},\\
            0,&\quad \text{if}\ x\ \in\ \mathbb{Z},
             \end{array}
\right.
\end{aligned}
\end{equation}
where $\lfloor x \rfloor$ denotes the integer part of the real number $x$.

\begin{definition}
Let  $\alpha > 0$ and $b \in \mathbb{N}$ with $ b \geqslant 2$;  the {L\'evy function}  $L_\alpha^b$ is defined by 
\begin{equation} \label{lef}  \forall x \in \mathbb{R}, \qquad 
L_\alpha^b(x)=\sum_{i=1}^{\infty}\frac{\{b^ix\}}{b^{\alpha i}} .
\end{equation}
\end{definition}

In Part I, we studied the bivariate multifractal spectra of shifted L\'evy functions \cite{jaffardliaoqian}, and in this second part, we determine their bivariate Legendre spectra. Comparing the two  will  improve our understanding of the domain of validity of the bivariate multifractal formalism.

The successes of multifractal analysis have been primarily limited to the examination of individual signals or images.  Considering the increasing availability of collections of related signals for numerous contemporary real-world applications, the restriction to univariate analysis represents a significant drawback. The study of multivariate multifractal analysis aims to establish a theoretical foundation by investigating the properties and limitations of the natural extension of univariate formalism to multivariate formalism.  

 In practice, one often analyzes with related signals for which a time shift of unknown value is present. Typical examples are supplied by EEG signals recorded at several locations in the brain:  these signals  encapsulate  information issued from different parts of the brain which are delayed by an unknown quantity \cite{Frontiers2020}. In such situations, it is important to understand how this time shift affects the joint multifractal properties of the two signals. Our purpose in this paper is to investigate this question on a particular toy model: we will consider shifted L\'evy functions  and study the dependency of the bivariate spectra as a function of  the shift.

Multivariate multifractal analysis is a simultaneous multifractal analysis of several pointwise regularity exponents derived from one or several functions or sample paths of stochastic processes. The fundamental definitions introduced in Part I are briefly recalled below. Suppose that the $d$ functions $f_i(1\leqslant i\leqslant d)$ defined on $\mathbb{R}$ are associated with pointwise regularity exponents $h_{f_i}(x)$ respectively. Given $H=(H_1, H_2,\cdots,H_n)\in \mathbb{R}^d$, one is  interested in the level sets
\begin{equation} \nonumber
\begin{aligned}
E_{f_1,f_2,\cdots,f_d}(H)=\ \big\{x \in \mathbb{R}^d: h_{f_1}(x) = H_1, h_{f_2}(x) = H_2,\cdots,h_{f_d}(x) = H_d\big\}.
\end{aligned}
\end{equation}
The multivariate multifractal spectrum is a (multivariate) function defined as
\begin{equation} \nonumber
\mathcal{D}_{f_1,f_2,\cdots,f_d}: H\longmapsto \dim_H\left(E_{f_1,f_2,\cdots,f_d}(H)\right),
\end{equation}
where $\dim_H$ stands for the Hausdorff dimension.
The support of the multivariate multifractal spectrum consists of the set of the vectors $H$, such that $E_{f_1,f_2,\cdots,f_d}(H)  \neq \emptyset$.

The multifractal Legendre spectrum is now extended to the multivariate setting. The main distinction from previous articles on the subject lies in the choice of multiresolution quantities: oscillations of order $n$ are employed, and $d$ functions $f_i(1\leqslant i\leqslant d)$ are considered,  each associated with its respective oscillation $d_{\lambda}^{n,(i)}$. To define the oscillations, the notation of finite differences  of order $M$ is recalled, following the inductive construction in \cite[p23]{clausel2008quelques}.
 
 \begin{definition}
Let $f$ be a locally bounded function. Let $h\in \mathbb{R}$, $x\in \mathbb{R}$, and $M\geqslant 1$. Then
%The oscillations of $f$ or order $M$ are defined by recursion: 
 \begin{equation}
    \nonumber
    \Delta_f^1(x,h)=f(x+h)-f(x),
\end{equation}
and, if $M\geqslant 2$, then
\begin{equation}
\begin{aligned}
     \nonumber
    \Delta_f^M(x,h)=\Delta_f^{M-1}(x+h,h)-\Delta_f^{M-1}(x,h).
\end{aligned}
\end{equation}
\end{definition}

Let $j\in\mathbb{Z}$ and $k \in \mathbb{Z}$. Denote by $\lambda(=\lambda(j,k))$ the $b$-adic interval $\left[\frac{k}{b^j},\frac{k+1}{b^j}\right)$ and $3\lambda$ the interval of the same center and three times wider. For clarity, it is assumed throughout that the fractions of the form $\frac{k}{b^j}(\forall j)$ are in reduced form , i.e. $\gcd(k,b)=1,$ ($k\wedge b=1)$. The behavior of L\'evy functions is primarily governed by the scale parameter $ j $, while the specific value of $k $ plays no essential role. In the text, the rational number $\frac{k}{b^j} $ is used to represent a generic point at scale $ j $, where $k$ is any admissible integer satisfying the coprimality condition. For $x\in\mathbb{R}$, let $\lambda_j(x)$ be the unique $b$-adic interval of width $b^{-j}$ which contains $x$. The { \em $M$-th order oscillation}  of $f$ on $3 \lambda$ is
\begin{equation}
   \nonumber
d_{\lambda}^M=\sup\limits_{(x,h):\ x,\ x+Mh \ \in 3\lambda}|\Delta_f^M(x,h)|.
\end{equation}
For all $1\leqslant i\leqslant d$, the oscillations $d_{\lambda}^{M,(i)}$ corresponding to the functions $f_i$ are defined on the same ($b$-adic) grid. 

%{ \color{red} il y a un probleme de notation: l'indice $i$ en haut designe parfois l'ordre de l'oscillation et parfois la i-eme fonction consideree; il faut reprendre les notations et verifier partout apres qu'il n'y a pas de probleme } 
%As explained in \cite[Section 1]{qianjump}, using oscillations, we  extend the multifractal analysis into multivariate setting. Using first order oscillations is sufficient when dealing with  functions  such that for all $x$, $  h_f(x) \leqslant 1$. 

In all generality, in order to perform the multifractal analysis of smooth functions, the definition of the oscillation has to be replaced by   differences of order larger than 1; however, in the case of L\'evy functions,  the derived Legendre spectra  do not depend on the value of $M \geqslant 2$, even if the H\"older exponent takes values larger than 2,  see \cite{qianjump}. This explains why, in the following, the index $M$ in the notation of oscillation will often be dropped, in which case it is implicitly assumed that $M=2$. Moreover, as proved in \cite[section 2.2]{qianjump}, the  Legendre spectra  based on different $b$-adic splittings do not depend on the value of $b$.  Therefore, without loss of generality, the  $b$-adic segmentations  will be used for the study of  $b$-adic L\'evy functions \eqref{lef} in the multivariate setting.

Denote by $\Lambda_j$ the collection of $b$-adic intervals of width $b^{-j}$ and by $\Lambda$ the collection of all $b$-adic intervals. Let  $d_{\lambda}^{(i)}$ denote the oscillation of $f_i$ on the interval $3\lambda$ for $1\leqslant i \leqslant d$;
the {\em multivariate multifractal structure function} is 
\begin{equation} \label{sructurefunction}
    \forall r=(r_1,r_2,\cdots,r_d)\in \mathbb{R}^d,\  S(r,j)=b^{-j}\sum\limits_{\lambda\in\Lambda_j}\prod\limits_{i=1}^d\big(d_{\lambda}^{(i)}\big)^{r_i}.
\end{equation}

The {\em scaling function}  of $(f_1, \dots , f_d) $ is defined as
\begin{equation} \label{scalingfunction}
 \zeta(r)=\liminf\limits_{j\to+\infty}\frac{\log(S(r,j))}{\log(b^{-j})}.
\end{equation}
The {\em multivariate multifractal Legendre spectrum} is
\begin{equation} \label{mml}
    \forall H\in\mathbb{R}^d, \ \mathcal{L}_{f_1,f_2,\cdots,f_d}(H)=\inf_{r\in\mathbb{R}^d}(1-\zeta(r)+H\cdot r),
    \end{equation}
where $H\cdot r$ denotes the usual scalar product in $\mathbb{R}^d$.

In the univariate setting, the  first formulation of the multifractal formalism was proposed in the seminal article of U. Frisch and G. Parisi \cite{frisch1980fully}. The  formulation of its multivariate extension, also based on increments,    was introduced by C. Meneveau et al. in \cite{meneveau1990joint}. In the realm of hydrodynamic turbulence, this extended formalism is commonly referred to as "grand canonical," drawing inspiration from thermodynamics, which serves as the foundational source for the intuitions and constructions underpinning multifractal analysis \cite{arneodo1998singularity}.  The formulation we use in the present paper is based on oscillations instead of  increments. This constitutes a major difference, especially for the estimation of negative moments (i.e. when the vector $r$   in \eqref{mml} has negative components)  where it is well known that oscillations lead to numerically stable estimations, and it is not the case for increments (see e.g. \cite{jaffard2007wavelet,Jaffard2015,lashermes2008comprehensive}  and references therein for  discussions of the instabilities that show up when estimating negative moments). 
Notably, in sharp contrast  with the univariate  case,  the key  upper bound 
\begin{equation} \label{uppbou} \forall H\in \mathbb{R}, \qquad 
    \mathcal{D}(H)\leqslant \mathcal{L}(H),  
\end{equation} does not extend in all generality to the multivariate setting, see \cite{jaffard2019multifractal} (note however that some partial generic results remain valid,  see e.g.  \cite{ben2016baire,abid2017prevalent}). The Legendre spectrum does not yield an upper bound for the bivariate multifractal spectrum in several cases;   this  drawback has been the object of several studies, see in particular \cite{jaffard2019multifractal} where conditions are derived under which the upper bound is valid;  more recently, in \cite{seuret2024multivariate}, S. Seuret investigated the bivariate multifractal analysis of pairs of Borel probability measures, proving that, in this case too,   the Legendre spectrum does not yield an upper bound for the bivariate multifractal spectrum.

The multifractal formalism has not been extensively explored in practical applications. One of the  goals of this article is to verify the numerical accuracy of this formalism on some toy examples. 
This research aims to provide concrete evidence of the robustness and reliability of multifractal approaches based on oscillations when applied to real data.

Our main purpose in this paper is to contribute to the  comprehension of this question in the following way: the bivariate multifractal formalism is studied in the particular case of a L\'evy function \eqref{lef} and its shift by a quantity $y$. More precisely, the analysis focuses on how the result depends on the arithmetic properties of the shift $y$. It is shown that the result does not depend only on the $b$-adic approximation properties of $y$, but actually on the precise sequence of digits of $y$ of its expansion in basis $b$. Our analysis reveals that, for  almost all shift $y$, the bivariate  multifractal formalism does not hold, even in  this simplified situation involving a function and its translates. Moreover, the bivariate Legendre spectrum is bounded above by the bivariate multifractal spectrum. It highlights an unexpected inverse dependency, marks a fundamental divergence from the univariate framework where \eqref{uppbou}  holds. This limitation of the validity of the multifaractal formalism has critical implications for practical applications, notably in non-stationary signal processing or complex systems modeling, where the bivariate interpretation of multifractal spectra is often essential. 
%In application, data are often stored with possible shifts in the value of which is unknown; an example is supplied by EEG signals recorded at different locations, which involve time lags between the reception of information from different brain areas.
% { \color{blue} signaler ici ce que donne le formalisme multifaractal pour presque tout $y$ et ce que ca implique pour les applications} 

Our paper is organized as follows. In Section \ref{section2}, the main results of our paper are presented, which concern the determination of the multivariate multifractal Legendre spectra of shifted L\'evy functions; in particular, in view of the results collected in Part I, this allows  to discuss the validity or invalidity of the multifractal formalism depending on the value taken by the shift parameter $y$. Then the proofs concerning the multivariate Legendre spectra are supplied in Section \ref{section5}. Finally, in the conclusion (Section \ref{sec:concl}), the numerical results supplied by the respectively wavelet-based and oscillation-based multifractal formalisms are compared. 

%%%%%%%%%%%%%%%%%%%%%%%%%%%%%%%%%%%%%%%%%%%%%%%%%%%%%%%%%%%%%%%%%%%%%%%%%%%%%%%%%%%%%%%%%%%%%%%%%%
\section{Statement of  the main results} \label{section2}
%%%%%%%%%%%%%%%%%%%%%%%%%%%%%%%%%%%%%%%%%%%%%%%%%%%%%%%%%%%%%%%%%%%%%%%%%%%%%%%%%%%%%%%%%%%%%%%%%%
% We mainly consider the spectrum of L\'evy functions in the following.

To state results on the multivariate multifractal Legendre spectra of shifted L\'evy functions, it is necessary introduce some notations.
 Let $\mathcal{A}=\{0,1,\cdots,b-1\}$. Every number $x$ in $[0,1)$ admits a $b$-ary expansion
\begin{equation} \label{numx} 
x=\sum\limits_{i=1}^{\infty} \frac{\varepsilon_i}{b^i},\quad\forall \varepsilon_i\ \in\mathcal{A}
\end{equation}
which is unique except when $x$ is not a $b$-adic rational number, in which case two distinct expansions exist. 
Conversely, to an  infinite word $\varepsilon=\varepsilon_1\varepsilon_2\cdots$, is associated the real number $x$ given by \eqref{numx} through  the projection map $\Pi:\mathcal{A}^{\mathbb{N}}\longmapsto[0,1)$   defined by 
\begin{equation}\nonumber
\Pi(\varepsilon)=\sum\limits_{i=1}^{\infty} \frac{\varepsilon_i}{b^i},\quad \forall \varepsilon=\varepsilon_1\varepsilon_2\cdots\in\mathcal{A}^{\mathbb{N}}.
\end{equation}
This map  is one-to-one except for the $b$-adic rationals (where it is two-to-one). Let $y=\frac{k}{b^m}$ be a $b$-adic rational number, $(\text{with}\ \gcd(k,b)=1, \ \text{which will be denoted as}\ k\wedge b=1)$. Then the $b$-ary expansion $\varepsilon_1\varepsilon_2\cdots$ of $\frac{k}{b^m}$ either terminates with zeros or becomes eventually constant with digit $(b-1)$'s. More precisely, it falls into one of the following two cases:
\begin{itemize}
\item $\forall k>m; \quad \varepsilon_m= a,\ \varepsilon_k=0,$ 
\item
$\forall k>m,  \quad \varepsilon_m = a-1,\ \varepsilon_k=b-1. $
\end{itemize}
where $a\in \mathcal{A}\backslash\{0\}$.

Throughout this paper, when referring to the $b$-ary expansion of the $b$-adic rational number $\frac{k}{b^m}$, we will, unless otherwise specified, adopt the convention that the expansion satisfies $\varepsilon_m=a$, $\varepsilon_k=0,\forall k>m$. The alternative representation, with $\varepsilon_m=a-1$, $\varepsilon_k=b-1,\forall k>m$, leads to the same conclusions and can be treated similarly.

 \bigskip
%In the following, we perform the bivariate multifractal analysis.
Define the shifted L\'evy function by $L_{\alpha}^{b,y}(x)=L_{\alpha}^{b}(x-y)$. The corresponding bivariate multifractal spectra $\mathcal{D}_{L_{\alpha_1}^b, L_{\alpha_2}^{b,y}}$ were estimated in \cite{jaffardliaoqian}. 

Note that  $\{ x \} = -\{ 1-x \} $, implying that  $L_{\alpha}^b (x) = -L_{\alpha}^b (1-x) $ and the operation $x \rightarrow 1-x$ changes the digit $\varepsilon_i $ in the $b$-ary expansion of $x$ and $1-x$ as follows: $\varepsilon_i $ in the $b$-ary expansion of $x$ becomes $\varepsilon_{b-i} $ in the $b$-ary expansion $1-x$. Thus, replacing the shift $y$ by $1-y$ does not affect the bivariate multifractal spectrum. 

The bivariate multifractal spectrum of the L\'evy function $L_{\alpha_1}^{b}$ and the translated L\'evy function $L_{\alpha_2}^{b,y},$ was given in Part I, see  \cite{jaffardliaoqian}; we recall some of these results.

\begin{theorem} \cite[Theorem 2.1]{jaffardliaoqian}\label{hausdorffdimension} Let $\alpha_1, \alpha_2>0$. For almost every $y$, the bivariate multifractal spectrum $\mathcal{D}_{L_{\alpha_1}^b,L_{\alpha_2}^{b,y}}$
     of  the L\'evy functions $L_{\alpha_1}^{b}$ and $L_{\alpha_2}^{b,y}$,  is given by
$$
\mathcal{D}_{L_{\alpha_1}^b,L_{\alpha_2}^{b,y}}(H_1, H_2)=
  \begin{cases}
    \min\left\{\frac{H_1}{\alpha_1}, \frac{H_2}{\alpha_2}\right\},       & \quad {\rm if}\ (H_1 , H_2)\in  [0,\alpha_1]\times \big[0,\alpha_2],\\
-\infty       & \quad {\rm else}.
  \end{cases}
$$ 
\end{theorem}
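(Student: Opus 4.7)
The plan is to reduce the bivariate multifractal spectrum to a joint $b$-adic Diophantine approximation problem and then compute the Hausdorff dimension of the resulting intersection of limsup sets. First, I would recall the classical characterization of the pointwise H\"older exponent of $L_\alpha^b$: for every $x$ that is not a $b$-adic rational,
$$h_{L_\alpha^b}(x) = \frac{\alpha}{\tau(x)}, \qquad \tau(x) := \limsup_{m \to \infty} \frac{-\log_b \operatorname{dist}(x,\ b^{-m}\mathbb{Z})}{m},$$
which expresses the balance between the jumps of size $b^{-\alpha m}$ at $b$-adic rationals of denominator $b^m$ and the smooth background contribution. Since $L_{\alpha_2}^{b,y}(x) = L_{\alpha_2}^b(x-y)$, one has $h_{L_{\alpha_2}^{b,y}}(x) = \alpha_2/\tau(x-y)$, so that
$$E_{L_{\alpha_1}^b,\ L_{\alpha_2}^{b,y}}(H_1,H_2) = \bigl\{x \in \mathbb{R}: \tau(x) = \alpha_1/H_1,\ \tau(x-y) = \alpha_2/H_2\bigr\}.$$
The universal bound $\tau \geq 1$ immediately yields the support inclusion $(H_1,H_2)\in[0,\alpha_1]\times[0,\alpha_2]$.

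The upper bound then follows from the Jarn\'ik--Besicovitch theorem: each univariate level set $\{x: \tau(x)= \tau_i\}$ has Hausdorff dimension $1/\tau_i = H_i/\alpha_i$, and $E_{L_{\alpha_1}^b,\ L_{\alpha_2}^{b,y}}(H_1,H_2)$ is contained (up to translation) in both of them, giving $\dim E \leq \min(H_1/\alpha_1,H_2/\alpha_2)$. The matching lower bound is the main obstacle. A naive Marstrand-type intersection argument would only yield $H_1/\alpha_1 + H_2/\alpha_2 - 1$, strictly smaller than $\min$ in general; the improvement comes from the highly structured limsup nature of Jarn\'ik--Besicovitch sets together with typical arithmetic properties of $y$. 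Assuming without loss of generality that $H_1/\alpha_1 \leq H_2/\alpha_2$, I would construct a Cantor-type subset of $E(H_1,H_2)$ of dimension $\min$ by selecting a sparse sequence of scales $(m_k)$ and prescribing the $b$-ary digits of $x$ on carefully chosen windows around each $m_k$: some windows force $x$ close to a $b$-adic rational (by zeroing the corresponding digits of $x$) to control $\tau(x)$, while other windows force $x-y$ close to a $b$-adic rational (by matching the digits of $x$ with those of $y$) to control $\tau(x-y)$; the two types of windows are interleaved at disjoint scales so as not to interfere. The fact that, for almost every $y$, the base-$b$ digits of $y$ are normal plays an essential role in ensuring that this construction is consistent at every scale.

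The hard part will be proving that the resulting Cantor-type set has Hausdorff dimension exactly $\min(H_1/\alpha_1,H_2/\alpha_2)$. This requires a ubiquity or mass-transference argument in the spirit of Beresnevich--Velani, adapted to the translated setting, together with a careful bookkeeping of the density of free digits across the interleaved windows: because the constraints on $\tau(x)$ and on $\tau(x-y)$ are imposed at disjoint scales, they do not compound as in Marstrand's formula, and the asymptotic density of free digits saturates at $1/\max(\tau_1,\tau_2) = \min(H_1/\alpha_1,H_2/\alpha_2)$. Verifying this saturation, and simultaneously ruling out, using the genericity of $y$, any accidental alignment between the $b$-adic rationals $p/b^m$ and their translates $p/b^m - y$ that would create unexpected Diophantine coincidences, is the technical heart of the argument.
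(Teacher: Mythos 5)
This theorem is not proved in the present paper: it is quoted verbatim from Part I \cite{jaffardliaoqian}, and Section \ref{section5} only proves the Legendre-spectrum results, so there is no in-document proof to compare yours against line by line. That said, your outline is the standard route and almost certainly the one taken in Part I: the identification $h_{L^b_\alpha}(x)=\alpha/\tau(x)$ at non-$b$-adic points, the Jarn\'ik--Besicovitch upper bound $\min\{H_1/\alpha_1,H_2/\alpha_2\}$ obtained from each marginal separately, and a lower bound via a Cantor set built by interleaving, at disjoint and very sparse scales, windows where the digits of $x$ are set to $0$ (forcing $\tau(x)\geqslant\tau_1$) with windows where the digits of $x$ copy those of $y$ (forcing $\tau(x-y)\geqslant\tau_2$); since the two families of constrained windows never overlap, the liminf of local dimensions of the natural measure is $1/\max\{\tau_1,\tau_2\}$ rather than the Marstrand value, and the mass distribution principle gives the matching lower bound. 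Two caveats on what you have left unproved. First, the genuinely delicate step is not the dimension count but the verification that each type of window does not accidentally improve the \emph{other} approximation rate: a zero-window for $x$ is simultaneously a copy-window for $x-y$ precisely on the positions where $y$ itself carries the digit $0$ (and symmetrically), and this is exactly where the hypothesis on $y$ enters --- if $y$ has runs of identical digits of length comparable to their starting position, the two constraints collapse onto the diagonal, as happens for $b$-adic rational $y$ in Proposition \ref{examplebadicrational}. Your appeal to normality of $y$ is sufficient for this (normality forces runs of constant digits starting at position $m$ to have length $o(m)$), but it is stronger than needed and not the natural formulation; the relevant almost-sure condition is the sublinear growth of the digit blocks of $y$ (in the notation of this paper, $w_k=o(u_k)$), which also holds for every non-$b$-adic rational, consistent with the second case of Proposition \ref{examplebadicrational}. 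Second, you must also secure the exact equalities $\tau(x)=\tau_1$ and $\tau(x-y)=\tau_2$ (not just inequalities) on a full-dimensional subset, which requires controlling the free digits between windows; this is routine but should be stated. With these points filled in, the argument is complete and is, as far as one can tell, essentially the proof of \cite{jaffardliaoqian}.
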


\begin{proposition}\cite[Proposition 2.4]{jaffardliaoqian}\label{examplebadicrational}
Let $\alpha_1, \alpha_2>0$. For any $y \in \mathbb{Q} $, the following dichotomy holds for the bivariate multifractal spectrum $\mathcal{D}_{L_{\alpha_1}^b,L_{\alpha_2}^{b,y}}$
     of  the L\'evy function $L_{\alpha_1}^{b}$ and the translated L\'evy function $L_{\alpha_2}^{b,y}$:
\begin{itemize}
    \item 
if $y$ is a $b$-adic rational number, then the bivariate spectrum takes values on the diagonal defined by $\frac{H_1}{{\alpha_1}} = \frac{H_2}{{\alpha_2}}$, and  
\begin{equation}
    \begin{aligned}
        \nonumber
        \mathcal{D}_{L_{{\alpha_1}}^b,L_{\alpha_2}^{b,y}}(H_1, H_2)=
  \begin{cases}
    \frac{H_1}{{\alpha_1}},       & \quad {\rm if}\ H_2 = \frac{{\alpha_2}}{{\alpha_1}}H_1,\ \text{and}\ H_1\in [0,\alpha_1],\\
-\infty       & \quad {\rm else}.
  \end{cases}
    \end{aligned}
\end{equation}
\item If $y$ is a non-$b$-adic rational number, then the bivariate spectrum is 
$$
\mathcal{D}_{L_{{\alpha_1}}^b,L_{{\alpha_2}}^{b,y}}(H_1, H_2)=
  \begin{cases}
    \min\left\{\frac{H_1}{{\alpha_1}}, \frac{H_2}{{\alpha_2}}\right\},       & \quad {\rm if}\ (H_1 , H_2)\in  [0,{\alpha_1}]\times \big[0,{\alpha_2}],\\
-\infty       & \quad {\rm else}.
  \end{cases}
$$
\end{itemize}
\end{proposition}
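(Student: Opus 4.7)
The starting point is the well-known description of the pointwise H\"older exponent of a L\'evy function: for $x$ not $b$-adic rational, $h_{L_\alpha^b}(x)=\alpha/\beta_b(x)$, where $\beta_b(x)\in[1,+\infty]$ is the $b$-adic approximation rate of $x$, read off from the asymptotic lengths of runs of digits $0$ (or $b-1$) in the $b$-ary expansion. At a $b$-adic rational $k/b^j$ with $k\wedge b=1$, the function carries a jump of size comparable to $b^{-\alpha j}$, and this jump dominates the oscillation $d_\lambda$ on $3\lambda$ whenever $3\lambda$ contains such a rational. The univariate spectrum $\dim_H\{x:h_{L_\alpha^b}(x)=H\}=H/\alpha$ for $H\in[0,\alpha]$ then reduces to the classical dimension formula $\dim_H\{x:\beta_b(x)=\beta\}=1/\beta$.

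For the first item, take $y=k_0/b^{m_0}$ with $k_0\wedge b=1$. For every $j\geqslant m_0$ and every integer $k$, the identity $k/b^j-y=(k-k_0 b^{j-m_0})/b^j$ shows that $x\mapsto x-y$ permutes the $b$-adic rationals while shifting the reduced denominator scale by at most $m_0$. Hence $\beta_b(x-y)=\beta_b(x)$ for every $x$, and
\[
h_{L_{\alpha_2}^{b,y}}(x)\;=\;\frac{\alpha_2}{\beta_b(x-y)}\;=\;\frac{\alpha_2}{\beta_b(x)}\;=\;\frac{\alpha_2}{\alpha_1}\,h_{L_{\alpha_1}^b}(x).
\]
Consequently $E_{L_{\alpha_1}^b,L_{\alpha_2}^{b,y}}(H_1,H_2)$ is empty unless $H_2=(\alpha_2/\alpha_1)H_1$, in which case it coincides with the univariate level set $E_{L_{\alpha_1}^b}(H_1)$ of Hausdorff dimension $H_1/\alpha_1$, which settles the diagonal spectrum.

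For the second item, assume $y\in\mathbb{Q}$ is not of the form $k/b^m$, so its $b$-ary expansion is eventually periodic with a non-trivial period. The upper bound
\[
\dim_H E_{L_{\alpha_1}^b,L_{\alpha_2}^{b,y}}(H_1,H_2)\;\leqslant\;\min\!\Bigl\{\tfrac{H_1}{\alpha_1},\tfrac{H_2}{\alpha_2}\Bigr\}
\]
is immediate from the inclusion $E_{L_{\alpha_1}^b,L_{\alpha_2}^{b,y}}(H_1,H_2)\subseteq E_{L_{\alpha_1}^b}(H_1)\cap E_{L_{\alpha_2}^{b,y}}(H_2)$ and the univariate dimension formula above. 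The real content of the proposition, and the main obstacle, is the matching lower bound.

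To produce it, the plan is to construct a Cantor-type subset of $E_{L_{\alpha_1}^b,L_{\alpha_2}^{b,y}}(H_1,H_2)$ of the prescribed dimension by designing the $b$-ary digits of $x$ by hand. Setting $\beta_i=\alpha_i/H_i$, we need $x$ with $\beta_b(x)=\beta_1$ and $\beta_b(x-y)=\beta_2$ simultaneously. Choose two disjoint increasing sequences of scales $(j_n^{(1)})$ and $(j_n^{(2)})$ and, alternating between them, insert at each $j_n^{(1)}$ a long block of $0$'s directly in the expansion of $x$ (realising the rate $\beta_1$ for $x$), and at each $j_n^{(2)}$ a block of digits in $x$ chosen so that, after subtraction of the periodic digits of $y$ with the induced carries, a long block of $0$'s appears in the expansion of $x-y$ (realising the rate $\beta_2$ for $x-y$). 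The non-triviality of the period of $y$ is exactly what makes this digit-matching solvable at every position; it is also the structural reason the first case is different, since a $b$-adic $y$ forces the digits of $x$ and $x-y$ to agree past some point, so the two conditions cannot be decoupled. A standard mass distribution / Frostman argument applied to a natural measure on this Cantor construction then delivers the lower bound $\min(1/\beta_1,1/\beta_2)$. The main technical difficulty is the careful bookkeeping needed to check that the engineered constraints do not \emph{exceed} the targeted rates, i.e.\ that carry propagation between the engineered blocks does not accidentally create a faster unintended $b$-adic approximation of $x$ or of $x-y$.
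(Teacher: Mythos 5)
This proposition is quoted verbatim from Part I (\cite[Proposition 2.4]{jaffardliaoqian}) and is stated in the present paper without proof, so there is no in-document argument to compare yours against; I can only assess your proposal on its own terms. Your strategy is the natural one and appears sound: reduce both pointwise exponents to $b$-adic approximation rates via $h_{L_\alpha^b}(x)=\alpha/\beta_b(x)$; for $b$-adic $y=k_0/b^{m_0}$ observe that $k/b^j\mapsto k/b^j-y$ is a scale-preserving bijection of reduced fractions for $j>m_0$ (since $k-k_0b^{j-m_0}\equiv k\pmod b$), so $\beta_b(x-y)=\beta_b(x)$ and the spectrum collapses to the diagonal; for non-$b$-adic rational $y$ combine the trivial intersection upper bound with a digit-engineered Cantor set. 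The one place where the sketch leaves genuine work is the lower bound, and specifically the justification that it equals $\min\{H_1/\alpha_1,H_2/\alpha_2\}$ rather than the additive codimension value $H_1/\alpha_1+H_2/\alpha_2-1$ — which is exactly what the Legendre spectrum of Theorem \ref{multitheofinal} gives for such $y$, and the gap between the two is the point of this paper, so this is where a referee will look hardest. The mechanism that produces the $\min$ is that your two digit constraints (runs of $0$'s in $x$ at scales $j_n^{(1)}$, and blocks of $x$'s digits matching those of $y$ at scales $j_n^{(2)}$ so that $x-y$ acquires runs of $0$'s after the borrows) must be imposed along \emph{lacunary} interleaved sequences with $j_{n+1}/j_n\to\infty$; then the natural measure on the resulting Cantor set satisfies a Frostman estimate with exponent $\min\{1/\beta_1,1/\beta_2\}$ because at any single scale one pays for at most one of the two constraints. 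You should state this scale separation explicitly — without it the construction could degrade to the additive value. Finally, the verification you defer (that a matching block creates no unintended long run of $0$'s or $(b-1)$'s in $x$, and a $0$-block in $x$ creates none in $x-y$) is where the non-triviality of the period of $y$ enters: the period may itself contain runs of $0$'s, but these have bounded length and therefore cannot raise $\beta_b$. With those two points made precise the argument is complete.
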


  For L\'evy functions,  Legendre spectra based on oscillations of any order $M \geqslant 2$ yield the same result, even for large values of $\alpha$, see \cite[Proposition 2.5]{qianjump}. 
  %Therefore  second order oscillations work for all the parameters $\alpha>0$ of L\'evy functions $L_{\alpha}^b$, see \cite[Proposition 2.5]{qianjump}.
  As a consequence, here and thereafter, we assume that the order of oscillations is $M=2$.
  
  To state our main results, we now define a digit-block sequence associated with the $b$-ary expansion of $y$, which is crucial for classifying the validity of the multifractal formalism. 
% Let $u_1=1$. Suppose that $\varepsilon_1=0$ in the $b$-ary expansion of $y$ and for all $k\geqslant 2$, $u_{k-1}$ has been defined. 
% \begin{equation}
% \begin{aligned}\label{rst}
%     &u_{2k}=\inf\left\{i:\ i\geqslant u_{2k-1},\ \text{and}\  \varepsilon_{i} \in \mathcal{A}\backslash \{0\}\right\},\\
%     &u_{2k+1}=\inf\left\{i:\ i\geqslant u_{2k},\ \text{and}\ \varepsilon_{i} =0\right\},\\
% \end{aligned}  
% \end{equation}
% Similarly, if $\varepsilon_1\neq0$ in the $b$-ary expansion of $y$, then we can define
% \begin{equation}
% \begin{aligned}
%   \label{rst2}
%   &u_{2k}=\inf\left\{i:\ i\geqslant u_{2k-1}\ \text{and}\ \varepsilon_{i}=0\right\},\\
%     &u_{2k+1}=\inf\left\{i:\ i\geqslant u_{2k},\ \text{and}\ \varepsilon_{i} \in \mathcal{A}\backslash \{0\}\right\}.
% \end{aligned}  
% \end{equation}
% For all $k\geqslant 1$, let 
% \begin{equation}
%     \label{w_k}
%     {w}_{k}=u_{k+1}-u_{k}.
% \end{equation}
Let $u_1=1$. Suppose that $\varepsilon_1$ is given in the $b$-ary expansion of $y$. Define recursively for all $k \geqslant 2$:  

\begin{equation}
\begin{aligned}\label{rst}
    &u_{k}=\inf\left\{i:\ i\geqslant u_{k-1},\ \text{and}\ \varepsilon_{i} \notin \mathcal{B}(u_{k-1})\right\},
\end{aligned}  
\end{equation}
where $\mathcal{B}(u_{k-1})$ represents the block containing index $u_{k-1}$, defined as follows:
\begin{itemize}
    \item If $\varepsilon_{u_{k-1}} = 0$, then $\mathcal{B}(u_{k-1})$ is the longest contiguous block of zeros starting at $u_{k-1}$.
    \item If $\varepsilon_{u_{k-1}} = b-1$, then $\mathcal{B}(u_{k-1})$ is the longest contiguous block of $(b-1)$'s starting at $u_{k-1}$.
    \item Otherwise, $\mathcal{B}(u_{k-1})$ is the longest contiguous block of digits from $\mathcal{A} \setminus \{0, b-1\}$ starting at $u_{k-1}$.
\end{itemize}

For all $k \geqslant 1$,  the length of the $k$-th block as  
\begin{equation}
    \label{w_k}
    w_k = u_{k+1} - u_k.
\end{equation}  
Clearly, $\{w_k\}$ forms a sequence of positive integers.

The following theorem gives the bivariate multifractal Legendre spectrum of $L_{\alpha_1}^{b}$ and $L_{\alpha_2}^{b,y}$.

% { \color{blue} On a $\{ x \} = -\{ 1-x \}  $  donc $L_{\alpha}^b (x)  = -L_{\alpha}^b (1-x)  $ et l'operation $x \rightarrow 1-x$ change $\varepsilon_i $ en $\varepsilon_{d-i} $ (si $b =2$, on permute les $0$ et les $1$). On en deduit donc que le resultat sur le spectre bivari\'e ne doit pas changer si les  la suite $\varepsilon_i $ associee a $y$ subit la meme permutation, quce qui ne me semble pas etre le cas. \\ 

% Et la meme remarque s'applique aussi sur les resultats de Part I: si certains resultats sont modifies quand on change $y$ et $1-y$, il y a un probleme.  } 

\begin{theorem}\label{multitheofinal}
       Let $y \in [0, 1]$. If y is a $b$-adic rational number, then $\{w_k\}$ as defined in \eqref{w_k} is a finite sequence, and the bivariate multifractal Legendre spectrum of $L_{\alpha_1}^{b}$ and $L_{\alpha_2}^{b,y}$ based on oscillations is 
        \begin{equation}
        \nonumber
\mathcal{L}_{L_{\alpha_1}^b,L_{\alpha_2}^{b,y}}(H_1, H_2)= \left\{
             \begin{array}{ll}
           \frac{H_1}{\alpha_1}=\frac{H_2}{\alpha_2},&\quad {\rm for} \quad \big(H_1, \frac{\alpha_2}{\alpha_1} H_1\big) \quad {\rm with} \quad \ H_1\in [0,  \alpha_1 ],\vspace{1ex}\\
            -\infty,&\quad {\rm else}.
             \end{array}
\right.
    \end{equation}
 If $y$ is not a $b$-adic rational number, then the following dichotomy holds: 

\begin{itemize}
    \item 
If $\{w_k\}$ is a bounded infinite sequence, then the bivariate multifractal Legendre spectrum based on the oscillations is 
\begin{equation}
        \nonumber
\mathcal{L}_{L_{\alpha_1}^b,L_{\alpha_2}^{b,y}}(H_1, H_2)= \left\{
             \begin{array}{ll}
           \frac{H_1}{\alpha_1}+\frac{H_2}{\alpha_2}-1,&\quad {\rm if}\ (H_1,H_2)\ \in\ [0,\alpha_1]\times [0,\alpha_2],\ \\&\quad \text{\rm and}\ \frac{H_1}{\alpha_1}+\frac{H_2}{\alpha_2}-1\geqslant 0,\vspace{1ex}\\
            -\infty,&\quad {\rm else}.
             \end{array}
\right.
    \end{equation}
 \item 
If $\{w_k\}$ is an unbounded infinite sequence, then the bivariate multifractal Legendre spectrum based on the oscillations is
\begin{equation}
        \nonumber
\mathcal{L}_{L_{\alpha_1}^b,L_{\alpha_2}^{b,y}}(H_1, H_2)= \begin{cases}
    \min\left\{\frac{H_1}{\alpha_1}, \frac{H_2}{\alpha_2}\right\},       & \quad {\rm if}\ (H_1,H_2) \in \left[0,\alpha_1\right]\times\left[0,{\alpha_2}\right],\\
-\infty       &\quad {\rm else}.
  \end{cases}
    \end{equation}
    \end{itemize}
 \end{theorem}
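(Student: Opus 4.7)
The plan is to compute $\mathcal{L}_{L_{\alpha_1}^b,L_{\alpha_2}^{b,y}}$ in two stages: first determine the scaling function $\zeta$ of \eqref{scalingfunction}, and then Legendre-transform via \eqref{mml}. From the oscillation analysis of L\'evy functions in \cite{qianjump} and Part I one has the pointwise estimate $d_\lambda^{(i)}\asymp b^{-\alpha_i\sigma_i(\lambda)}$, where
$$\sigma_1(\lambda) := \min\bigl\{s\ge 0 : 3\lambda \text{ contains a reduced } b\text{-adic rational } p/b^s\bigr\}$$
and $\sigma_2(\lambda)$ is defined analogously on $3\lambda - y$. Setting $N_j(s_1,s_2) := \#\{\lambda\in\Lambda_j : \sigma_i(\lambda)=s_i,\ i=1,2\}$, the structure function \eqref{sructurefunction} becomes
$$S(r,j)\asymp b^{-j}\sum_{s_1,s_2=0}^{j}N_j(s_1,s_2)\,b^{-\alpha_1 r_1 s_1-\alpha_2 r_2 s_2},$$
and the whole problem reduces to counting $N_j(s_1,s_2)$.

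The counting is Diophantine in nature: an interval $\lambda\in\Lambda_j$ is counted by $N_j(s_1,s_2)$ precisely when some pair of reduced numerators $(p_1,p_2)$ satisfies $\lvert p_1/b^{s_1}-p_2/b^{s_2}-y\rvert=O(b^{-j})$. Rewriting this difference in lowest form exhibits a reduced $b$-adic rational $q/b^M$ with $M=\max(s_1,s_2)$ approximating $y$ within $O(b^{-j})$, and a short combinatorial argument gives $\asymp b^{\min(s_1,s_2)}$ pairs $(p_1,p_2)$ per valid target $q/b^M$. The three cases of the theorem correspond to three regimes of $b$-adic approximability of $y$. If $y=m/b^p$ is $b$-adic, shifting preserves the $b$-adic grid past scale $p$ and one checks that $\sigma_1(\lambda)=\sigma_2(\lambda)$ for every $\lambda$ with $\sigma_1(\lambda)\ge p$, so $N_j$ is (up to $O(1)$ exceptional intervals) concentrated on the diagonal $s_1=s_2$. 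If $y$ is non-$b$-adic with $\{w_k\}$ bounded, the block-length constraint is equivalent to the uniform Diophantine lower bound $\lvert y-q/b^M\rvert\gtrsim b^{-M-O(1)}$ for every reduced $q/b^M$, forcing $N_j(s_1,s_2)=0$ unless $\max(s_1,s_2)\ge j-O(1)$. If $\{w_k\}$ is unbounded, the scales $j_m=u_{k_m+1}$ at the end of each long block produce super-approximations of $y$ by reduced $b$-adic rationals of scale $u_{k_m}-1$ with error $\le b^{-j_m}$, creating $\lambda$'s with both $\sigma_1(\lambda)$ and $\sigma_2(\lambda)$ much smaller than $j_m$ along this subsequence.

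Inserting the three counts into $S(r,j)$ and extracting the dominant exponential by a standard max-of-affine-forms argument yields $\zeta(r)=\min(\alpha_1 r_1+\alpha_2 r_2,\,1)$ in the $b$-adic case, $\zeta(r)=\min(\alpha_1 r_1+\alpha_2 r_2,\,1+\alpha_1 r_1,\,1+\alpha_2 r_2)$ in the bounded case, and the extra cap $\zeta(r)\le 1$ in the unbounded case, which arises as a $\liminf$ along the exceptional subsequence $(j_m)$. A direct Legendre transform of each scaling function --- minimized at $(r_1,r_2)=(1/\alpha_1,1/\alpha_2)$ in the last two regimes, on the diagonal $\alpha_2 H_1=\alpha_1 H_2$ in the first, with the $-\infty$ regions identified by pushing rays $r\to\infty$ along which $1-\zeta(r)+H\cdot r\to-\infty$ --- returns the three formulas of Theorem~\ref{multitheofinal}.

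The principal obstacle is the Diophantine counting lemma. The bounded direction needs the uniform lower bound $\lvert y-q/b^M\rvert\gtrsim b^{-M-O(1)}$, which must be derived by a case analysis on the three block categories (runs of $0$, runs of $b-1$, runs of intermediate digits) defining $\mathcal{B}(u_{k-1})$, since the carry/borrow mechanism in $\lvert y-q/b^M\rvert$ behaves differently in each case. The unbounded direction requires selecting the correct subsequence $(j_m)$ and verifying that the super-approximating pairs collectively depress $\liminf_j\log S(r,j)/\log b^{-j}$ all the way to the saturating cap $\zeta(r)\le 1$, rather than merely contributing a subdominant correction.
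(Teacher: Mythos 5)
Your overall architecture is the same as ours: estimate the pair of oscillations $(d_\lambda^{(1)},d_\lambda^{(2)})$ on each $3\lambda$, assemble the structure function, extract the scaling function as a minimum of affine forms, and Legendre-transform. Your three scaling functions $\min(\alpha_1r_1+\alpha_2r_2,\,1)$, $\min(\alpha_1r_1+\alpha_2r_2,\,1+\alpha_1r_1,\,1+\alpha_2r_2)$, and the latter with the extra cap $1$, agree with Lemmas \ref{lemma_structurefunc_rational} and \ref{lemma_structurefunc_irrational}, and your count $N_j(s_1,s_2)\asymp b^{\min(s_1,s_2)}$ on its support is exactly the count we use implicitly when we sum $\sim b^i$ intervals per jump scale $i$. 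The one substantive difference is that you replace the digit-by-digit carry analysis of $\frac{k_2}{b^i}+y$ (Lemmas \ref{bi_oscillation_rational} and \ref{bi_oscillation_irrational}) by the Diophantine condition $\lvert p_1/b^{s_1}-p_2/b^{s_2}-y\rvert=O(b^{-j})$, i.e. approximability of $y$ by reduced $b$-adic rationals of scale $\max(s_1,s_2)$.

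That translation is where the gap sits. The distance from $y$ to the nearest $b$-adic rational of scale at most $M$ equals $b^{-M}$ times the distance from $b^My$ to $\mathbb{Z}$, so it is abnormally small exactly when the digits of $y$ after position $M$ begin with a long run of $0$'s or of $(b-1)$'s; a long run of digits from $\mathcal{A}\setminus\{0,b-1\}$ produces \emph{no} super-approximation. Consequently your asserted equivalence between ``$\{w_k\}$ bounded'' and ``$\lvert y-q/b^M\rvert\gtrsim b^{-M-O(1)}$ for all reduced $q/b^M$'' holds in only one direction: a shift $y$ whose expansion contains unboundedly long runs of intermediate digits but only bounded runs of $0$'s and of $(b-1)$'s has $\{w_k\}$ unbounded in the sense of \eqref{w_k} yet is badly approximable by $b$-adic rationals, so your criterion places it in the ``bounded'' regime and outputs $\frac{H_1}{\alpha_1}+\frac{H_2}{\alpha_2}-1$, whereas the statement you are proving demands $\min\bigl(\frac{H_1}{\alpha_1},\frac{H_2}{\alpha_2}\bigr)$ for every unbounded $\{w_k\}$. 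To close this you must either handle the third block type separately (note that our own Cases 1 and 2 preceding Lemma \ref{bi_oscillation_irrational} only cover $0$-blocks and $(b-1)$-blocks, and in the unbounded case we explicitly pass to a subsequence of $0$-blocks) or justify that intermediate-digit blocks may be disregarded, which your Diophantine lemma as stated cannot do. A minor secondary point: in the unbounded regime the infimum of $1-\zeta(r)+H\cdot r$ is not attained at $(1/\alpha_1,1/\alpha_2)$, where $\zeta=1$ and the value is $\frac{H_1}{\alpha_1}+\frac{H_2}{\alpha_2}$, but at the endpoints $(1/\alpha_1,0)$ and $(0,1/\alpha_2)$ of the segment $\alpha_1r_1+\alpha_2r_2=1$; that is what produces the $\min$ in the final formula.
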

 The proof of Theorem \ref{multitheofinal} will be provided in Section \ref{section5.1}.
 \begin{remark}
    For almost every $y$, the bivariate Legendre spectrum is bounded above by the bivariate multifractal spectrum.
 \end{remark}

\begin{remark}
   If $y$ is a $b$-adic rational number, it follows from Proposition \ref{examplebadicrational} and Theorem \ref{multitheofinal} that the bivariate multifractal spectrum coincides with the bivariate multifractal Legendre spectrum based on the oscillations. If $\{w_k\}$ is an unbounded sequence, then by Theorem \ref{multitheofinal} and \cite[Theorems 2.1 and 2.2]{jaffardliaoqian},
   \begin{equation}
    \nonumber
    \mathcal{D}_{L_{\alpha_1}^b,L_{\alpha_2}^{b,y}}(H_1, H_2)\leqslant \mathcal{L}_{L_{\alpha_1}^b,L_{\alpha_2}^{b,y}}(H_1, H_2).
\end{equation}
But if $\{w_k\}$ is a bounded sequence, then the bivariate multifractal spectrum is a subcase of \cite[Theorem 2.3]{jaffardliaoqian}. Using Theorem \ref{multitheofinal}, it holds that
 \begin{equation}
    \label{LlesthanD}
    \mathcal{L}_{L_{\alpha_1}^b,L_{\alpha_2}^{b,y}}(H_1, H_2)\leqslant \mathcal{D}_{L_{\alpha_1}^b,L_{\alpha_2}^{b,y}}(H_1, H_2).
\end{equation}
Furthermore, combining \cite[Remark 1]{jaffardliaoqian}, it turns out that \eqref{LlesthanD} holds for almost every $y$. This analysis yields new counterexamples, effectively demonstrating that the bivariate multifractal Legendre spectra fail to establish an upper bound on the bivariate multifractal spectra, even in the very special setting of a function and its translates. Instead, it reveals an unexpected inverse correlation, diverging from conventional expectations. This phenomenon contrasts sharply with the univariate setting, where the upper bound  \eqref{uppbou}   always holds.
\end{remark}

% By Theorem \ref{multitheofinal}, the following result is immediate.
% \begin{corollary}\label{Legendrespectrum}
%     For the L\'evy function $L_{\alpha_1}^{b}$ and the translated L\'evy function $L_{\alpha_2}^{b,y},$ if $y \in \widetilde{F}_N$, then the bivariate multifractal Legendre spectrum based on the oscillations is given by
%     \begin{equation}
%         \nonumber
% \mathcal{L}_{L_{\alpha_1}^b,L_{\alpha_2}^{b,y}}(H_1, H_2)= \left\{
%              \begin{array}{ll}
%           \frac{H_1}{\alpha_1}+\frac{H_2}{\alpha_2}-1,&\quad \text{\rm if}\ (H_1,H_2)\ \in\ [0,\alpha_1]\times [0,\alpha_2],\\&\quad \text{\rm and}\ \frac{H_1}{\alpha_1}+\frac{H_2}{\alpha_2}-1\geqslant 0,\vspace{1ex}\\
%             -\infty,&\quad \text{\rm {else}}.
%              \end{array}
% \right.
%     \end{equation}
% \end{corollary}

% \begin{remark}
%     Theorem \ref{hausdorffdimension} and Corollary \ref{Legendrespectrum} yield that for any $y\in\widetilde{F}_N$, 
% \begin{equation}
%     \nonumber
%     \mathcal{L}_{L_{\alpha_1}^b,L_{\alpha_2}^{b,y}}(H_1, H_2)\leqslant \mathcal{D}_{L_{\alpha_1}^b,L_{\alpha_2}^{b,y}}(H_1, H_2).
% \end{equation}

% \end{remark}

Next, we give the multivariate multifractal spectrum for $n$ L\'evy functions. Recall that  $w_k$  is defined in \eqref{rst}; denote 
\begin{equation}
    \begin{aligned}
        \label{s123}
        &\mathcal{S}_1=\{y:\{w_k\}\ \text{is a bounded sequence}\},\\
        &\mathcal{S}_2=\{y:\{w_k\}\ \text{is an unbounded sequence}\},\\
        &\mathcal{S}_3=\{y: y\ \text{is a $b$-adic rational number}\}.
    \end{aligned}
\end{equation}
Let $n\geqslant 2$ and $\alpha_i>0$, $y_i\in[0,1)$ for $i=1,\cdots, n$. If $y_1=0$, and $y_i$ is $b$-adic rational for any $2\leqslant i\leqslant n$, then the  multivariate multifractal Legendre spectrum for $n$ L\'evy functions $L_{\alpha_i}^{b,-y_i}$ ($1\leqslant i\leqslant n$) is as follows.
\begin{theorem}
    \label{multivariatemultifractalLegendrespectrum_nfunctionsrational}
Let $L_{\alpha_i}^{b,-y_i}$ ($1\leqslant i\leqslant n$) be the $i$-th L\'evy function with shift $-y_i$. If $y_1=0$, and $y_i\in \mathcal{S}_3,(2\leqslant i \leqslant n)$, then the multivariate multifractal Legendre spectrum of these $n$ functions based on the oscillations is
\begin{small}
\begin{equation}
    \begin{aligned}
        \nonumber
    \mathcal{L}_{L_{\alpha_1}^{b,-y_1},L_{\alpha_2}^{b,-y_2},\cdots,L_{\alpha_n}^{b,-y_n}}(H_1, H_2,\cdots,H_n)=
    \left\{
             \begin{array}{ll}
           \frac{H_1}{\alpha_1},\ &\text{\rm if}\ (H_1,\cdots,H_n)\in [0,\alpha_1]\times\cdots\times[0,\alpha_n],\vspace{1ex}\\&H_i=\frac{\alpha_i}{\alpha_1}H_1,\ \text{\rm for}\ 2\leqslant i \leqslant n,\ \vspace{1ex}\\
            -\infty,&\text{\rm else}.
             \end{array}
\right.
    \end{aligned}
\end{equation}
\end{small}
\end{theorem}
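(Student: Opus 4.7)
The plan is to compute the multivariate scaling function $\zeta(r_1,\ldots,r_n)$ and then perform the Legendre transform in \eqref{mml}. The strategy extends the $b$-adic rational case of Theorem \ref{multitheofinal} from $n=2$ to arbitrary $n\geqslant 2$: the fact that every shift $y_i$ is $b$-adic rational aligns the scales at which the oscillations of the $n$ Lévy functions simultaneously become large, forcing the Legendre transform to be supported on a single diagonal ray.

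Fix $m_0$ such that $b^{m_0}$ is a common denominator of all the $y_i$. For $j>m_0$, the map $\lambda\mapsto\lambda+y_i$ is a bijection on $\Lambda_j$ which preserves the coprimality of numerators modulo $b$. Since a reduced $b$-adic rational $p/b^k$ is a jump of $L_{\alpha_i}^b$ of size $\asymp b^{-\alpha_i k}$, I would introduce a common effective scale $k(\lambda)$, defined as the smallest $k$ such that $3(\lambda+y_1)$ contains a reduced $b$-adic rational of denominator $b^k$. For $k>m_0$, translation by $y_i-y_1$ sends reduced $b$-adic rationals of scale $k$ to reduced $b$-adic rationals of scale $k$, so the same $k(\lambda)$ governs every function and
\begin{equation*}
d_\lambda^{(i)}\ \asymp\ b^{-\alpha_i k(\lambda)},\qquad\text{uniformly in }i.
\end{equation*}
The number of $\lambda\in\Lambda_j$ with $k(\lambda)=k$ is $\asymp b^k$ for $m_0<k\leqslant j+1$, while only $O(1)$ intervals have $k(\lambda)\leqslant m_0$. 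Writing $A(r)=\sum_{i=1}^n\alpha_i r_i$, the structure function satisfies
\begin{equation*}
S(r,j)\ \asymp\ b^{-j}\sum_{k=m_0+1}^{j+1}b^{k(1-A(r))}\ +\ O(b^{-j}),
\end{equation*}
which, just as in the univariate Lévy analysis, yields $\zeta(r)=\min(A(r),1)$.

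Next, I would compute the Legendre transform. On the half-space $\{A(r)\leqslant1\}$, the expression $1-\zeta(r)+H\cdot r=1+\sum_i(H_i-\alpha_i)r_i$ is affine in $r$ and is bounded below on that region only if $H-\alpha$ is a nonpositive multiple of $\alpha$. Symmetrically, on $\{A(r)\geqslant1\}$ the expression reduces to $H\cdot r$, which is bounded below only if $H$ is a nonnegative multiple of $\alpha$. Combining, $\mathcal{L}(H)$ is finite precisely on the diagonal ray $H=s\alpha$ with $s\in[0,1]$, where a direct optimization in either region yields the value $s=H_1/\alpha_1$. Off this ray, picking $v$ with $\sum_i\alpha_iv_i=0$ and $\sum_i(H_i-\alpha_i)v_i\neq 0$ drives the objective to $-\infty$ along $r_0+tv$, giving $\mathcal{L}(H)=-\infty$.

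The main obstacle is the alignment step: establishing $d_\lambda^{(i)}\asymp b^{-\alpha_i k(\lambda)}$ uniformly in $i$ requires careful treatment of the finitely many small-scale exceptions (where $k(\lambda)\leqslant m_0$), of the wrap-around induced by the $1$-periodicity of the sawtooth $\{\cdot\}$, and of the possibility that several jumps of different scales sit inside $3(\lambda+y_i)$, in which case the coarsest one dominates. Once this uniformity is secured, the counting and the Legendre computation follow the univariate template, and the proof of the bivariate $b$-adic case of Theorem \ref{multitheofinal} provides the matching template for the multivariate extension.
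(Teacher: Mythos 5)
Your proposal is correct and follows essentially the same route as the paper: both establish that the $b$-adic rational shifts align the effective oscillation scales of all $n$ functions (the paper via its digit-wise Lemma on $b$-ary expansions, you via the observation that translation by $q/b^{m_0}$ preserves reduced $b$-adic rationals of scale $k>m_0$), deduce $\zeta(r)=\min\left(\sum_{i=1}^n\alpha_i r_i,\,1\right)$ from the resulting geometric structure function, and then carry out the Legendre transform, which is finite exactly on the ray $H_i=\frac{\alpha_i}{\alpha_1}H_1$ with $H_1\in[0,\alpha_1]$, where it equals $\frac{H_1}{\alpha_1}$. Your recession-direction argument for the transform is a slightly cleaner packaging of the paper's explicit substitution $r_1=\bigl(1-\sum_{i\geqslant 2}\alpha_i r_i\bigr)/\alpha_1$ on the critical hyperplane, but the content is identical.
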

Theorem \ref{multivariatemultifractalLegendrespectrum_nfunctionsrational} will be proven in Section \ref{section5.2}.

If $y_1=0$, and $y_i(2\leqslant i\leqslant n)$ is not $b$-adic rational satisfying that the sequence $\{w_k\}$ defined in \eqref{w_k} is bounded in the $b$-ary expansion of $y_i$ for $2\leqslant i\leqslant m$, and the sequence $\{w_k\}$ is unbounded in the $b$-ary expansion of $y_i$ for $m+1\leqslant i\leqslant n$, then the  multivariate multifractal Legendre spectrum for $n$ L\'evy functions $L_{\alpha_i}^{b,-y_i}$ ($1\leqslant i\leqslant n$) is given by the following theorem.
\begin{theorem}
    \label{multivariatemultifractalLegendrespectrum_nfunctions}
Let $L_{\alpha_i}^{b,-y_i}$ ($1\leqslant i\leqslant n$) be the $i$-th L\'evy function with shift $-y_i$. If $y_1=0, y_i\in \mathcal{S}_3,(2\leqslant i\leqslant m),$ and $y_i\in \mathcal{S}_1,(m+1\leqslant i \leqslant n)$, then the multivariate multifractal Legendre spectrum of these $n$ functions based on the oscillations is given by
\begin{small}
\begin{equation}
    \begin{aligned}
        \nonumber
\mathcal{L}_{L_{\alpha_1}^{b,-y_1},L_{\alpha_2}^{b,-y_2},\cdots,L_{\alpha_n}^{b,-y_n}}(H_1, H_2,\cdots,H_n)=
    \left\{
             \begin{array}{ll}
           \frac{H_1}{\alpha_1}+\frac{H_{m+1}}{\alpha_{m+1}}-1,\ &\text{\rm if}\ (H_1,\cdots,H_n)\in [0,\alpha_1]\times\cdots\times[0,\alpha_n],\vspace{1ex}\\&H_i=\frac{\alpha_i}{\alpha_1}H_1,\ \text{\rm for}\ 2\leqslant i \leqslant m,\ \vspace{1ex} \\&H_i=\frac{\alpha_i}{\alpha_{m+1}}H_{m+1}, \ \text{\rm for}\ m+2\leqslant i \leqslant n,\ \vspace{1ex}\\&\text{\rm and}\ \frac{H_l}{\alpha_{1}}+\frac{H_{m+1}}{\alpha_{m+1}}-1\geqslant 0,\vspace{1ex}\\
            -\infty,&\text{\rm else}.
             \end{array}
\right.
    \end{aligned}
\end{equation}
\end{small}
\end{theorem}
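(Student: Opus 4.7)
The plan is a two-step reduction. First, show that within each of the two groups of functions -- the $\mathcal{S}_3$ group indexed by $i=1,\ldots,m$ and the $\mathcal{S}_1$ group indexed by $i=m+1,\ldots,n$ -- the oscillations lock together, so the multivariate structure function \eqref{sructurefunction} reduces to an effective bivariate one. Second, apply the analysis underlying Theorem \ref{multitheofinal} (case 2) to this reduced problem to obtain the sum $\frac{H_1}{\alpha_1} + \frac{H_{m+1}}{\alpha_{m+1}} - 1$ with the constraint $\frac{H_1}{\alpha_1} + \frac{H_{m+1}}{\alpha_{m+1}} \geqslant 1$.

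For the $\mathcal{S}_3$ group the argument follows Theorem \ref{multivariatemultifractalLegendrespectrum_nfunctionsrational}: if $y_i = k_i/b^{m_i}$ is $b$-adic rational (with $y_1 = 0$), then for all scales $j \geqslant \max_i m_i$ the map $x \mapsto x - y_i$ permutes $\Lambda_j$, so on every $3\lambda$ the oscillation $d_\lambda^{(i)}$ equals, up to a bounded multiplicative constant, $\bigl(d_{\tilde\lambda}^{(1)}\bigr)^{\alpha_i/\alpha_1}$ for an appropriate $\tilde\lambda \in \Lambda_j$. Consequently the contribution of this group to $S(r,j)$ depends on $r_1,\ldots,r_m$ only through the linear combination $R_1 = \sum_{i=1}^m r_i \alpha_i/\alpha_1$.

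For the $\mathcal{S}_1$ group the argument is more delicate and constitutes the main technical obstacle: the shifts $y_i$ are non-$b$-adic and the block-length sequences associated via \eqref{w_k} to each $y_i$ may have different uniform bounds. One must nevertheless prove that, up to bounded multiplicative constants, $d_\lambda^{(i)} \asymp \bigl(d_\lambda^{(m+1)}\bigr)^{\alpha_i/\alpha_{m+1}}$ uniformly in $\lambda$ and $j$. This requires a block-by-block analysis carried out simultaneously across all shifts in the $\mathcal{S}_1$ group: the bounded block hypothesis forces the scale-dependent envelope of $d_\lambda^{(i)}$ to coincide with that of $d_\lambda^{(m+1)}$ up to the power $\alpha_i/\alpha_{m+1}$, and the prototype case is precisely the bivariate argument of Theorem \ref{multitheofinal} (case 2), which must be adapted to treat the $n-m$ shifts in parallel. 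Hence this group contributes to $S(r,j)$ only through $R_2 = \sum_{i=m+1}^n r_i \alpha_i/\alpha_{m+1}$.

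Once both reductions are established, the structure function factorizes (up to bounded multiplicative factors) as a bivariate structure function in the two variables $R_1$ and $R_2$, so the scaling function $\zeta(r)$ is the composition of the bivariate scaling function from Theorem \ref{multitheofinal} (case 2) with the linear map $r \mapsto (R_1, R_2)$. Taking the Legendre transform in \eqref{mml} then produces the claimed spectrum: the locking conditions $H_i = (\alpha_i/\alpha_1) H_1$ for $2 \leqslant i \leqslant m$ and $H_i = (\alpha_i/\alpha_{m+1}) H_{m+1}$ for $m+2 \leqslant i \leqslant n$ arise from the rank deficiency of this linear map (the infimum in \eqref{mml} is $-\infty$ unless $H$ annihilates its kernel), while the value $\frac{H_1}{\alpha_1} + \frac{H_{m+1}}{\alpha_{m+1}} - 1$ and the non-negativity constraint come directly from the bivariate Legendre transform already computed.
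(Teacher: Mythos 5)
Your proposal follows essentially the same route as the paper: the paper likewise observes that the oscillations lock within the $\mathcal{S}_3$ block and within the $\mathcal{S}_1$ block, writes a structure function that depends on $r$ only through $\sum_{i=1}^{m}\alpha_i r_i$ and $\sum_{i=m+1}^{n}\alpha_i r_i$, recovers the bounded-case bivariate scaling function of Lemma \ref{lemma_structurefunc_irrational} in these two variables, and reads off the locking conditions and the value $\frac{H_1}{\alpha_1}+\frac{H_{m+1}}{\alpha_{m+1}}-1$ from the Legendre transform, exactly as your rank-deficiency argument does. The one step you rightly single out as the crux --- that the $n-m$ distinct shifts in $\mathcal{S}_1$ produce oscillations locked to a common scale, which as you state it (``uniformly in $\lambda$ and $j$'') is stronger than what the carry-propagation intervals of Lemma \ref{bi_oscillation_irrational} actually allow --- is not carried out in your proposal, but the paper too only asserts it by citing the bivariate lemmas rather than giving a simultaneous multi-shift block analysis.
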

 Theorem \ref{multivariatemultifractalLegendrespectrum_nfunctions} will be proven in Section \ref{section5.3}.

Let us specify this result in the case $b=2$ and $y=\frac{1}{3}$ or $y=\frac{1}{5}$, then the bivariate multifractal Legendre spectrum is
\begin{equation}
        \label{1/3}
\mathcal{L}_{L_{\alpha_1}^2,L_{\alpha_2}^{2,y}}(H_1, H_2)= \left\{
             \begin{array}{ll}
           \frac{H_1}{\alpha_1}+\frac{H_2}{\alpha_2}-1,&\quad \text{if}\ H\ \in\ [0,\alpha_1]\times [0,\alpha_2],\ \text{and}\ \frac{H_1}{\alpha_1}+\frac{H_2}{\alpha_2}-1\geqslant 0,\vspace{1ex}\\
            -\infty,&\quad \text{\rm else}.
             \end{array}
\right.
    \end{equation}
If $b=2$ and $y=\frac{1}{2}$ or $y=\frac{1}{4}$, then the bivariate multifractal Legendre spectrum is
 \begin{equation}
        \label{1/2}
\mathcal{L}_{L_{\alpha_1}^b,L_{\alpha_2}^{b,y}}(H_1, H_2)= \left\{
             \begin{array}{ll}
           \frac{H_1}{\alpha_1}=\frac{H_2}{\alpha_2},&\quad {\rm for} \quad \big(H_1, \frac{\alpha_2}{\alpha_1} H_1\big) \quad {\rm with} \quad \ H_1\in [0,  \alpha_1 ],\vspace{1ex}\\
            -\infty,&\quad \text{\rm else}.
             \end{array}
\right.
    \end{equation}

%%%%%%%%%%%%%%%%%%%%%%%%%%%%%%%%%%%%%%%%%%%%%%%%%%%%%%%%%%%%%%%%%%%%%%%%%%%%%%%%%%%%%%%%%%%%%%%%%%
\section{Multivariate multifractal formalism} \label{section5}
%%%%%%%%%%%%%%%%%%%%%%%%%%%%%%%%%%%%%%%%%%%%%%%%%%%%%%%%%%%%%%%%%%%%%%%%%%%%%%%%%%%%%%%%%%%%%%%%%%
In this section, we provide the proofs of Theorems \ref{multitheofinal}, \ref{multivariatemultifractalLegendrespectrum_nfunctionsrational}, and \ref{multivariatemultifractalLegendrespectrum_nfunctions}. The general approach relies on a detailed analysis of the scaling behavior of oscillations over $b$-adic intervals. Here and thereafter, for simplicity, the notation $x\sim y$ represents the relation  $c_1y\leqslant x\leqslant c_2y$, where $c_1$ and $c_2$ are positive constants.
%%%%%%%%%%%%%%%%%%%%%%%%%%%
\subsection{Proof of Theorem \ref{multitheofinal}} \label{section5.1}
 The oscillations $d_{\lambda}^{(1)} $and $ d_{\lambda}^{(2)} $associated with the functions $L_{\alpha_1}^b$and $ L_{\alpha_1}^{b,y} $, respectively, are characterized within the interval $ 3\lambda $, with their behavior governed by the $ b $-ary expansion of the shift $y $. These oscillations serve as the basis for computing the structure function and the scaling function, from which the Legendre spectrum are subsequently obtained via the application of the Legendre transform, following their formal definitions.

The following proposition is  crucial for determining the Legendre spectrum. Using the second order (or higher) oscillation, we characterized the specific oscillation of L\'evy function $L_{\alpha}^b$ in \cite[Proposition 2.9]{qianjump}.   
\begin{proposition} \label{Propsecondorderoscillation}
   Let $\lambda(=\lambda(j,k))$ denote the $b$-adic interval $\left[\frac{k}{b^j},\frac{k+1}{b^j}\right)$ and $3\lambda$ be the interval of the same center and three times wider. Let $l$ be the smallest integer such that $\frac{k}{b^l}\in 3\lambda$, that is, $l=\min\left\{l:\frac{k}{b^l}\in 3\lambda,\  k\wedge b=1\right\}$. The  second order oscillation of $L_{\alpha}^b$ over $3\lambda$ satisfies $$d_{\lambda}\sim \Delta\left(\frac{k}{b^{l}}\right)\sim b^{-\alpha l}$$.
\end{proposition}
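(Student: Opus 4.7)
The plan is to decompose $L_\alpha^b$ according to scale as $L_\alpha^b = S_< + T_l + S_>$, where
$$S_<(x) = \sum_{i<l} b^{-\alpha i}\{b^i x\}, \quad T_l(x) = b^{-\alpha l}\{b^l x\}, \quad S_>(x) = \sum_{i>l} b^{-\alpha i}\{b^i x\},$$
and then analyze how each part enters the second-order difference $\Delta_{L_\alpha^b}^2(x,h)$ for admissible $x,x+2h\in 3\lambda$.

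The first step is to observe that $S_<$ is affine on $3\lambda$, so $\Delta^2 S_<(x,h)\equiv 0$. This uses the minimality of $l$ in an essential way: any discontinuity of $\{b^i y\}$ at $y=k'/b^i\in 3\lambda$ can be rewritten in reduced form $k''/b^{i'}$ with $\gcd(k'',b)=1$ and $i'\leqslant i$, so $i<l$ would contradict the definition of $l$. Each term of $S_<$ is therefore linear on $3\lambda$ with no jump. This already yields the upper bound:
$$d_\lambda \leqslant 4\|T_l+S_>\|_\infty \leqslant 2\sum_{i\geqslant l} b^{-\alpha i} = \frac{2b^{-\alpha l}}{1-b^{-\alpha}}= O(b^{-\alpha l}).$$

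For the lower bound I would first note that since $\gcd(k,b)=1$, the number $b^i\cdot k/b^l=b^{i-l}k$ is an integer not divisible by $b$ for every $i\geqslant l$; hence each sawtooth $\{b^i x\}$ with $i\geqslant l$ has a unit jump at $x_0=k/b^l$, giving the total jump $\Delta(k/b^l)=\sum_{i\geqslant l}b^{-\alpha i}\sim b^{-\alpha l}$. I would then exhibit a specific configuration: take $x$ slightly less than $x_0$ and $h$ of the form $b^{-N}/m$ for $N\gg l$ and $m$ suitably chosen, with $x+h<x_0<x+2h$ and $[x,x+2h]\subset 3\lambda$. For such $(x,h)$ the contribution of $T_l$ together with the $i\geqslant l$ part of $S_>$ that is singular at $x_0$ sums coherently to $\pm\Delta(k/b^l)$, while the tail $i>N$ is bounded uniformly by $O(b^{-\alpha N})=o(b^{-\alpha l})$. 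Combining the two bounds gives the claimed comparison $d_\lambda\sim\Delta(k/b^l)\sim b^{-\alpha l}$.

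The main obstacle is to control the intermediate scales $l<i\leqslant N$ in the lower bound, since each of these fine-scale sawtooths has several jumps within $3\lambda$ that could in principle cancel the main contribution from $x_0$. The remedy is to align $h$ with the $b$-adic grid so that the arguments $x$, $x+h$ and $x+2h$ do not fall on discontinuities of $\{b^i\cdot\}$ for intermediate $i$; the corresponding second differences from these scales then inherit bounds from the piecewise-linear structure of the sawtooth rather than from its sup norm, and remain dominated by the principal jump at $x_0$. Once this alignment is secured the two-sided estimate $d_\lambda\sim b^{-\alpha l}$ follows.
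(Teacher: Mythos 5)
The paper does not actually prove this proposition here: it is imported verbatim from \cite[Proposition 2.9]{qianjump}, so there is no internal proof to compare against. Judged on its own terms, your argument is a correct, self-contained route to the statement. The splitting $L_\alpha^b=S_<+T_l+S_>$, the observation that minimality of $l$ (over \emph{reduced} fractions) forces every $\{b^i\cdot\}$ with $i<l$ to be affine on $3\lambda$ and hence annihilated by the second difference, and the resulting upper bound $d_\lambda\leqslant 4\|T_l+S_>\|_\infty\leqslant \frac{2\,b^{-\alpha l}}{1-b^{-\alpha}}$ are all sound; this is exactly the mechanism that makes the reduced-fraction scale $l$, rather than $j$, the relevant one.

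Two remarks on the lower bound. First, your remedy for the intermediate scales aims at the wrong target: whether $x$, $x+h$, $x+2h$ themselves land on discontinuities is immaterial. What actually controls the scales $l\leqslant i\leqslant N$ is that $x_0=k/b^l$ is a discontinuity of \emph{every} $\{b^i\cdot\}$ with $i\geqslant l$, while the nearest other discontinuity at scale $i$ lies at distance $b^{-i}\geqslant b^{-N}>2h$; hence on the window $[x,x+2h]$ each such term is affine apart from its single jump at $x_0$, and its second difference equals exactly $-b^{-\alpha i}$ once $x+h<x_0<x+2h$. These contributions add coherently to $-\sum_{i\geqslant l}b^{-\alpha i}+O(b^{-\alpha N})$, the scales $i>N$ being absorbed by the sup-norm tail bound as you say; no arithmetic alignment of $h$ is needed beyond $2h<b^{-N}$ with $N\gg l$. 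Second, beware of the symmetric configuration $x=x_0-h$: since $\{\cdot\}$ takes the midpoint value $0$ at integers, $f(x_0)=\tfrac12\left(f(x_0^-)+f(x_0^+)\right)$ and the centred second difference at $x_0$ tends to $0$; your strictly asymmetric straddle avoids this, and if $x_0$ happens to be the left endpoint of $3\lambda$ a one-sided configuration $x=x_0$ still yields $|\Delta^2|\to\tfrac12\left|\Delta\left(\frac{k}{b^l}\right)\right|$. With these points made precise, the two-sided estimate $d_\lambda\sim b^{-\alpha l}$ follows.
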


The following lemmas describe the oscillations of L\'evy functions $L_{\alpha_1}^b$ and $L_{\alpha_2}^{b,y}$, depending on whether $y$ is a $b$-adic rational number or not. 

\begin{lemma} \label{bi_oscillation_rational}
Let  $\lambda= \left[\frac{k'}{b^j},\frac{k'+1}{b^j}\right)$ be the $b$-adic interval and $y=\frac{k}{b^m}$ ($k\wedge b=1$) be a $b$-adic rational number, where the scale $j$ of the $b$-adic interval $\lambda$ is much larger than $m$. If the oscillation $d_{\lambda}^{(2)}$ in $3\lambda$ of the translated L\'evy function $L_{\alpha_2}^{b,y}$ is given by $d_{\lambda}^{(2)}\sim b^{-\alpha_2 i}$, then the oscillation $d_{\lambda}^{(1)}$ in $3\lambda$ of the L\'evy function $L_{\alpha_1}^b$  is
\begin{equation}
    \nonumber
    d_{\lambda}^{(1)}\sim 
    \left\{
             \begin{array}{ll}
           b^{- \alpha_1 m},&\quad \text{\rm if}\ i< m\\
            b^{- \alpha_1i'},&\quad \text{\rm if}\ i=m,\ \text{\rm and for all}\ 1\leqslant i'\leqslant m,\\
             b^{- \alpha_1i},&\quad \text{\rm if}\ i>m.
             \end{array}
\right.
\end{equation}
\end{lemma}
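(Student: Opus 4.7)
The plan is to reduce the lemma to Proposition \ref{Propsecondorderoscillation}, which characterizes the oscillation of a single L\'evy function $L_\alpha^b$ on $3\lambda$ as $b^{-\alpha l}$ where $l$ is the smallest integer such that $3\lambda$ contains a reduced $b$-adic rational $p/b^l$ with $\gcd(p,b)=1$. Since $L_{\alpha_2}^{b,y}(x) = L_{\alpha_2}^b(x-y)$, its oscillation on $3\lambda$ coincides with the oscillation of $L_{\alpha_2}^b$ on the translated interval $3\lambda - y$, so the hypothesis $d_\lambda^{(2)}\sim b^{-\alpha_2 i}$ identifies $i$ as the smallest integer such that $3\lambda - y$ contains a reduced $b$-adic $q/b^i$ with $\gcd(q,b)=1$. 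The task becomes: determine the smallest integer $l$ such that $3\lambda$ itself contains a reduced $b$-adic $p/b^l$, and show that $l$ satisfies the claimed dichotomy.

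The central arithmetic ingredient I would establish first is a denominator-tracking rule for the translation map $q/b^r \mapsto q/b^r + k/b^m$. Placing the sum over the common denominator $b^{\max(r,m)}$ and examining its numerator modulo $b$, and using $\gcd(k,b)=\gcd(q,b)=1$, one obtains: (i) if $r<m$ the sum is reduced with denominator $b^m$; (ii) if $r>m$ the sum is reduced with denominator $b^r$; (iii) if $r=m$ the sum has reduced form with denominator $b^{r'}$ for some $1\leq r'\leq m$. The same rule governs the inverse map $p/b^r\mapsto p/b^r-k/b^m$, and it is precisely this rule that mediates the correspondence between reduced $b$-adic rationals in $3\lambda-y$ and in $3\lambda$.

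Equipped with this rule, I would handle the three cases directly. When $i>m$, applying the rule in both directions shows that the denominator is preserved in passing from $3\lambda-y$ to $3\lambda$ and conversely, giving $l=i$. When $i=m$, the forward image $(q+k)/b^m$ lies in $3\lambda$ with denominator dividing $b^m$, hence in reduced form has denominator $b^{i'}$ for some $1\leq i'\leq m$, which is exactly the stated conclusion. The delicate case is $i<m$: the forward image gives $l\leq m$ at once, and the real work is to rule out $l<m$. I would argue by contradiction: if some reduced $p/b^l\in 3\lambda$ with $l<m$ existed, then together with the minimal $q/b^i+y\in 3\lambda$ one would have $|q/b^i + k/b^m - p/b^l|<3b^{-j}$; the left-hand side is a rational number with denominator dividing $b^m$, so the scale separation $j\gg m$ forces the numerator to vanish exactly; reducing the resulting integer identity modulo $b$ then yields $k\equiv 0\pmod b$, contradicting $\gcd(k,b)=1$.

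The main obstacle will be the last contradiction: keeping the modular bookkeeping clean across the three terms and making decisive use of $j\gg m$ to upgrade a smallness estimate into exact vanishing of a rational remainder. Once the denominator-tracking rule and this contradiction are in place, the lemma follows directly from Proposition \ref{Propsecondorderoscillation}.
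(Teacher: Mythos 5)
Your proposal is correct and follows essentially the same route as the paper: both reduce the lemma to Proposition \ref{Propsecondorderoscillation} and then track how the reduced denominator (equivalently, the $b$-ary expansion) of the minimal discontinuity point changes under translation by $y=\frac{k}{b^m}$, splitting into the cases $i<m$, $i=m$, $i>m$. Your modular-arithmetic contradiction in the case $i<m$ (using the scale separation $j\gg m$ to force exact vanishing and then reducing mod $b$) makes explicit a minimality point that the paper only treats informally via the digit diagrams, so it is, if anything, slightly more rigorous at that step.
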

\begin{proof}
Denote the $b$-ary expansion of $y=\frac{k}{b^{m}}$ as $\varepsilon_1\varepsilon_2\cdots$, where $\varepsilon_{m}\neq 0$, and $\varepsilon_{t}=0$ for all $t>m$. Suppose the first $b$-adic rational number in  $3\lambda$ is $\frac{k_2}{b^{i}}$ at which $L_{\alpha_2}^{b,y}$ is discontinuous.  By Proposition \ref{Propsecondorderoscillation},  the corresponding oscillation $d_{\lambda}^{(2)}$ in  $3\lambda$ is approximately equal to the jump at $\frac{k_2}{b^{i}}$. that is, 
\begin{equation}
    \nonumber  
d_{\lambda}^{(2)}\sim\Delta\left(\frac{k_2}{b^{i}}\right)\sim b^{-\alpha_2 i}.
\end{equation}

Observe that $d_{\lambda}^{(1)}$ corresponds to the first $b$-adic rational number  in $3\lambda$ where $L_{\alpha_1}^{y}$ is discontinuous. Hence, the value of the oscillation $d_{\lambda}^{(1)}$ in $3\lambda$ of the L\'evy function $L_{\alpha_1}^{y}$  can be inferred by examining the $b$-ary expansion of $\frac{k_2}{b^{i}}+y$.
 
 When $i < m$, since $y=\frac{k}{b^m}$ is a $b$-adic rational, then  the $b$-ary expansion of $\frac{k_2}{b^{i}}+y$ satisfies $\varepsilon_{m}\neq 0$, and $\varepsilon_{t}=0$ for all $t>m$, which means it is a $b$-adic rational number, denoted $\frac{k_1}{b^m}$. As shown in Figure \ref{bianryi1<m}. Hence, the jump point of $L_{\alpha_1}^{b}$  occurs at scale $m$. By Proposition \ref{Propsecondorderoscillation},
\begin{equation}
    \nonumber
    d_{\lambda}^{(1)}\sim\Delta\left(\frac{k_1}{b^{m}}\right)\sim b^{-\alpha_1 m}.
\end{equation}
Note that $a', a^*\in \mathcal{A}\backslash \{0\}$ in the following figures. 

% { \color{blue} what do you mean here?  }

\begin{figure}[H]
\centering
\begin{scaletikzpicturetowidth}{\textwidth}
\begin{tikzpicture}[scale=\tikzscale]
\draw  [black](-2,0)--(7,0);
\filldraw [black] (1.5,0) circle (2pt);
\node (b) at(-1.5,0.3){\textcolor{red}{The $b$-ary expansion of $\frac{k_2}{b^{i}}$}};
\filldraw [black] (2.5,0) circle (2pt);
\filldraw [black] (5.5,0) circle (2pt);
\node[black] at(1.5,0.3){$a'$};
\node[black] at(1.7,0.3){$0$};
\node[black] at(2.1,0.3){$\cdots$};
\node[black] at(2.5,0.3){$0$};
\node[black] at(2.7,0.3){$0$};
\node[black] at(2.9,0.3){$0$};
\node[black] at(3.1,0.3){$0$};
\node[black] at(3.3,0.3){$0$};
\node[black] at(3.5,0.3){$0$};
\node[black] at(3.7,0.3){$0$};
\node[black] at(3.9,0.3){$0$};
\node[black] at(4.3,0.3){$\cdots$};
\node[black] at(4.7,0.3){$0$};
\node[black] at(4.9,0.3){$0$};
\node[black] at(5.1,0.3){$0$};
\node[black] at(5.3,0.3){$0$};
\node[black] at(5.5,0.3){$0$};
\node[black] at(5.9,0.3){$\cdots$};
\draw  [black](-2,-1)--(7,-1);
\filldraw [black] (1.5,-1) circle (2pt);
\node (b) at(-1.5,-0.7){\textcolor{red}{The $b$-ary expansion of $y$}};
\filldraw [black] (2.5,-1) circle (2pt);
% \node (b) at(1.5,-1.5){$\varepsilon_{J}$};
\filldraw [black] (5.5,-1) circle (2pt);
% \node (c) at(5.5,-1.5){$\varepsilon_{j}$};
\node[black] at(2.5,-0.7){$a^*$};
\node[black] at(2.7,-0.7){$0$};
\node[black] at(2.9,-0.7){$0$};
\node[black] at(3.1,-0.7){$0$};
\node[black] at(3.3,-0.7){$0$};
\node[black] at(3.5,-0.7){$0$};
\node[black] at(3.7,-0.7){$0$};
\node[black] at(3.9,-0.7){$0$};
\node[black] at(4.3,-0.7){$\cdots$};
\node[black] at(4.7,-0.7){$0$};
\node[black] at(4.9,-0.7){$0$};
\node[black] at(5.1,-0.7){$0$};
\node[black] at(5.3,-0.7){$0$};
\node[black] at(5.5,-0.7){$0$};
\node[black] at(5.9,-0.7){$\cdots$};
\node (b) at(-1.5,-1.7){\textcolor{red}{The $b$-ary expansion of $\frac{k_2}{b^{i}}+y$}};
\draw  [black](-2,-2)--(7,-2);
\filldraw [black] (1.5,-2) circle (2pt);
\node (b) at(1.5,-2.5){$\varepsilon_{i}$};
\filldraw [black] (2.5,-2) circle (2pt);
\filldraw [black] (5.5,-2) circle (2pt);
\node (c) at(2.5,-2.5){$\varepsilon_{m}$};
\node (c) at(5.5,-2.5){$\varepsilon_{j}$};
\node[black] at(2.5,-1.7){$a^*$};
\node[black] at(2.7,-1.7){$0$};
\node[black] at(2.9,-1.7){$0$};
\node[black] at(3.1,-1.7){$0$};
\node[black] at(3.3,-1.7){$0$};
\node[black] at(3.5,-1.7){$0$};
\node[black] at(3.7,-1.7){$0$};
\node[black] at(3.9,-1.7){$0$};
\node[black] at(4.3,-1.7){$\cdots$};
\node[black] at(4.7,-1.7){$0$};
\node[black] at(4.9,-1.7){$0$};
\node[black] at(5.1,-1.7){$0$};
\node[black] at(5.3,-1.7){$0$};
\node[black] at(5.5,-1.7){$0$};
\node[black] at(5.9,-1.7){$\cdots$};
\end{tikzpicture}
\end{scaletikzpicturetowidth}
\caption{} \label{bianryi1<m}
\end{figure}

When $i=m$, the $b$-ary expansion of $\frac{k_2}{b^{i}}+y$ align up to position $m$, and the presence of a carry-over depends on the digit values. As a result, there are two cases. Note that $\varepsilon_{i'}\in \mathcal{A} (\forall 1\leqslant i' \leqslant i)$, in the $b$-ary expansion of $\frac{k_2}{b^i}$. Let $\bar{a},\tilde{a}\in \mathcal{A}\backslash\{0\}$.
\begin{itemize}
    \item If $\varepsilon_m=\bar{a}$ in the $b$-ary expansion of $y$, and $\varepsilon_m=b-\bar{a}$ in the $b$-ary expansion of $\frac{k_2}{b^i}$,  then due to the occurrence of a carry in the addition process, the jump of  $L_{\alpha_1}^{y}$ could occur at all scale $i'$ satisfying $i\in [1,m-1]$. Then by Proposition \ref{Propsecondorderoscillation}, it holds that
    % { \color{blue} here and after, what do you mean by carry?  }
    \begin{equation}
    \nonumber
    d_{\lambda}^{(1)}\sim\Delta\left(\frac{k_1}{b^{i'}}\right)\sim b^{-\alpha_1 i'},
\end{equation}
 where $1\leqslant i'\leqslant m-1$.
 The carry arises because the sum $\bar{a} + (b - \bar{a}) = b$, which equals the base. In $b$-ary arithmetic, this triggers a carry to the next higher digit, i.e., position $m - 1$.  Moreover, if the digit in position $m - 1$ also sums to $b$ (for example, if the digits are of the form $a$ and $b - a -1$), the carry will propagate further to position $m - 2$, and so on. This chain of carries can potentially continue up to the most significant digits. Thus, the effective position $i'$, where the \textit{first non-zero difference} appears in the expansion after accounting for the carries, can be any value in the range $1 \leqslant i' \leqslant m - 1$.  
 
    \item If $\varepsilon_m=\bar{a}$ in the $b$-ary expansion of $y$, and $\varepsilon_m =\tilde{a}$ in the $b$-ary expansion of $\frac{k_2}{b^i}$, which satisfies $ \tilde{a}\neq b-\bar{a}$, then the jump of  $L_{\alpha_1}^{y}$ occurs at scale $m$. According to Proposition \ref{Propsecondorderoscillation}, it follows that
    \begin{equation}
    \nonumber
    d_{\lambda}^{(1)}\sim\Delta\left(\frac{k_1}{b^{m}}\right)\sim b^{-\alpha_1 m}.
    \end{equation}
\end{itemize}
Thus, the oscillation $d_{\lambda}^{(1)}\sim\Delta\left(\frac{k_1}{b^{i'}}\right)\sim b^{-\alpha i'}$ with $1\leqslant i'\leqslant m$.

When $i > m$, then  the $b$-ary expansion of $\frac{k_2}{b^{i}}+y$ satisfies $\varepsilon_{i}\neq 0$, and $\varepsilon_{t}=0$ for all $t>i$, which means that it remains a 
$b$-adic rational with scale $i$, denoted $\frac{k_1}{b^i}$. As shown in Figure \ref{bianryi1>m}. Then, the jump of $L_{\alpha_1}^{b}$ is located precisely $\frac{k_1}{b^i}$. By Proposition \ref{Propsecondorderoscillation}, it holds that
\begin{equation}
    \nonumber
    d_{\lambda}^{(1)}\sim\Delta\left(\frac{k_1}{b^{i}}\right)\sim b^{-\alpha_1 i}.
\end{equation}
\begin{figure}[H]
\centering
\begin{scaletikzpicturetowidth}{\textwidth}
\begin{tikzpicture}[scale=\tikzscale]
\draw  [black](-2,0)--(7,0);
\filldraw [black] (1.5,0) circle (2pt);
\node (b) at(-1.5,0.3){\textcolor{red}{The $b$-ary expansion of $\frac{k_2}{b^{i}}$}};
\filldraw [black] (2.5,0) circle (2pt);
\filldraw [black] (5.5,0) circle (2pt);
\node[black] at(2.5,0.3){$a'$};
\node[black] at(2.7,0.3){$0$};
\node[black] at(2.9,0.3){$0$};
\node[black] at(3.1,0.3){$0$};
\node[black] at(3.3,0.3){$0$};
\node[black] at(3.5,0.3){$0$};
\node[black] at(3.7,0.3){$0$};
\node[black] at(3.9,0.3){$0$};
\node[black] at(4.3,0.3){$\cdots$};
\node[black] at(4.7,0.3){$0$};
\node[black] at(4.9,0.3){$0$};
\node[black] at(5.1,0.3){$0$};
\node[black] at(5.3,0.3){$0$};
\node[black] at(5.5,0.3){$0$};
\node[black] at(5.9,0.3){$\cdots$};
\draw  [black](-2,-1)--(7,-1);
\filldraw [black] (1.5,-1) circle (2pt);
\node (b) at(-1.5,-0.7){\textcolor{red}{The $b$-ary expansion of $y$}};
\filldraw [black] (2.5,-1) circle (2pt);
\filldraw [black] (5.5,-1) circle (2pt);
\node[black] at(1.5,-0.7){$a^*$};
\node[black] at(1.7,-0.7){$0$};
\node[black] at(2.1,-0.7){$\cdots$};
\node[black] at(2.5,-0.7){$0$};
\node[black] at(2.7,-0.7){$0$};
\node[black] at(2.9,-0.7){$0$};
\node[black] at(3.1,-0.7){$0$};
\node[black] at(3.3,-0.7){$0$};
\node[black] at(3.5,-0.7){$0$};
\node[black] at(3.7,-0.7){$0$};
\node[black] at(3.9,-0.7){$0$};
\node[black] at(4.3,-0.7){$\cdots$};
\node[black] at(4.7,-0.7){$0$};
\node[black] at(4.9,-0.7){$0$};
\node[black] at(5.1,-0.7){$0$};
\node[black] at(5.3,-0.7){$0$};
\node[black] at(5.5,-0.7){$0$};
\node[black] at(5.9,-0.7){$\cdots$};
\node (b) at(-1.5,-1.7){\textcolor{red}{The $b$-ary expansion of $\frac{k_2}{b^{i}}+y$}};
\draw  [black](-2,-2)--(7,-2);
\filldraw [black] (1.5,-2) circle (2pt);
\node (b) at(1.5,-2.5){$\varepsilon_{m}$};
\filldraw [black] (2.5,-2) circle (2pt);
\filldraw [black] (5.5,-2) circle (2pt);
\node (c) at(2.5,-2.5){$\varepsilon_{i}$};
\node (c) at(5.5,-2.5){$\varepsilon_{j}$};
\node[black] at(2.5,-1.7){$a'$};
\node[black] at(2.7,-1.7){$0$};
\node[black] at(2.9,-1.7){$0$};
\node[black] at(3.1,-1.7){$0$};
\node[black] at(3.3,-1.7){$0$};
\node[black] at(3.5,-1.7){$0$};
\node[black] at(3.7,-1.7){$0$};
\node[black] at(3.9,-1.7){$0$};
\node[black] at(4.3,-1.7){$\cdots$};
\node[black] at(4.7,-1.7){$0$};
\node[black] at(4.9,-1.7){$0$};
\node[black] at(5.1,-1.7){$0$};
\node[black] at(5.3,-1.7){$0$};
\node[black] at(5.5,-1.7){$0$};
\node[black] at(5.9,-1.7){$\cdots$};
\end{tikzpicture}
\end{scaletikzpicturetowidth}
\caption{} \label{bianryi1>m}
\end{figure}

Hence, depending on the scale $i$ at which $L_{\alpha_1}^b$  exhibits a jump, the corresponding scale of discontinuity in $L_{\alpha_2}^{b,y}$ is determined. This completes the proof of Lemma \ref{bi_oscillation_rational}.
\end{proof}

Next, we give the results of oscillations in the $b$-adic interval $\lambda= \left[\frac{k'}{b^j},\frac{k'+1}{b^j}\right)$ of L\'evy functions $L_{\alpha_1}^{b}$ and $L_{\alpha_2}^{b,y}$ when $y$ is not  a $b$-adic rational number. Note that the scale of $\lambda$ is $j$. By the definition of $u_k (\forall k \geqslant 2)$ in \eqref{rst}, there exist $k$ such that $j\in [u_k,u_{k+2})$, and the blocks $\varepsilon_{u_k-1}\varepsilon_{n_k}\cdots\varepsilon_{u_{k+2}-1}$ in the $b$-ary expansion of $y$ satisfying one of the following cases: 
\begin{encase}
\item: \label{irration1}
$
        \forall i= u_{k}-1,\ \text{and}\ u_{k+1}\leqslant i< u_{k+2},\ \varepsilon_{i}\neq 0;\quad \text{and} \quad \forall u_{k}\leqslant i< u_{k+1}, \ \varepsilon_{i}=0.
$

% \item: \label{irration2}
% $
%         \forall i= u_{k}-1,\ \text{and}\ u_{k+1}\leqslant i< u_{k+2},\ \varepsilon_{i}=0;\quad \text{and} \quad \forall u_{k}\leqslant i< u_{k+1}, \ \varepsilon_{i}\neq 0.
% $
\item: \label{irration3}
$
        \forall i= u_{k}-1,\ \text{and}\ u_{k+1}\leqslant i< u_{k+2},\ \varepsilon_{i}\neq b-1;\quad \text{and} \quad \forall u_{k}\leqslant i< u_{k+1}, \ \varepsilon_{i}=b-1.
$
% \item:  \label{irration4}
% $
%         \forall i= u_{k}-1,\ \text{and}\ u_{k+1}\leqslant i< u_{k+2},\ \varepsilon_{i}=b-1;\quad \text{and} \quad \forall u_{k}\leqslant i< u_{k+1}, \ \varepsilon_{i}\neq b-1.
% $
\end{encase}
% The following lemma gives the results 
% of \ref{irration1} and \ref{irration3}. 
% For \ref{irration2} and \ref{irration4}, we only need to replace $u_k$, $u_{k+1}$ and $u_{k+2}$ by $u_{k+1}$, $u_{k+2}$ and $u_{k+3}$
\begin{lemma} \label{bi_oscillation_irrational}
Suppose that $y$ is not a $b$-adic rational number and let  $\lambda= \left[\frac{k'}{b^j},\frac{k'+1}{b^j}\right)$ be a $b$-adic interval. If the oscillation $d_{\lambda}^{(2)}$ in $3\lambda$ of the translated L\'evy function $L_{\alpha_2}^{b,y}$ is $d_{\lambda}^{(2)}\sim b^{-\alpha_2 i}$, then the oscillation $d_{\lambda}^{(1)}$ in $3\lambda$ of the L\'evy function $L_{\alpha_1}^b$  satisfies that, for $j\in[u_k,u_{k+1})$,
\begin{equation}
    \nonumber
    d_{\lambda}^{(1)}\sim 
    \left\{
             \begin{array}{ll}
           b^{-\alpha_1 u_k},&\quad \text{\rm if}\ i< u_k-1\\
            b^{- \alpha_1 i'},\ \text{\rm with}\ 1\leqslant i'\leqslant u_k-1,&\quad \text{\rm if}\ i=u_k-1\\
             b^{- \alpha_1 i},&\quad \text{\rm if}\ u_k-1\leqslant i\leqslant j;
             \end{array}
\right.
\end{equation}
for $j\in[u_{k+1},u_{k+2})$,
\begin{equation}
    \nonumber
    d_{\lambda}^{(1)}\sim 
    \left\{
             \begin{array}{ll}
           b^{-\alpha_1 u_k},&\quad \text{\rm if}\ i< u_k-1\\
            b^{- \alpha_1 i'},&\quad \text{\rm if}\ i=u_k-1
            ,\ \text{\rm and for all}\ 1\leqslant i'\leqslant u_k-1\\
             b^{- \alpha_1 i},&\quad \text{\rm if}\ u_k\leqslant i<u_{k+1}
            \\
             b^{- \alpha_1 i'},&\quad \text{\rm if}\ u_{k+1}\leqslant i\leqslant j,\ \text{\rm and for all}\ 1\leqslant i'\leqslant i,.
             \end{array}
\right.
\end{equation}
\end{lemma}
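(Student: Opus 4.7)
The plan is to adapt the strategy of Lemma \ref{bi_oscillation_rational} to the irrational case. Proposition \ref{Propsecondorderoscillation} reduces the problem to locating, for each function separately, the smallest-scale $b$-adic rational lying in $3\lambda$. The hypothesis $d_\lambda^{(2)} \sim b^{-\alpha_2 i}$ encodes the existence of a discontinuity of $L_{\alpha_2}^{b,y}$ of the form $k_2/b^i + y \in 3\lambda$ with $\gcd(k_2,b)=1$ and $i$ minimal. The output $d_\lambda^{(1)}$ is then the smallest-scale $b$-adic rational $k_1/b^{l}$ in the same interval $3\lambda$, whose location has to be determined from the $b$-ary expansion of $y$ around and beyond position $j$.

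The second step is to exploit the block definition of $\{u_k\}$ from \eqref{rst}. Within a single block $[u_k, u_{k+1})$ the digits of $y$ are constant (all equal to $0$, to $b-1$, or to some fixed digit in $\mathcal{A}\setminus\{0,b-1\}$), which tightly constrains the finer-scale $b$-adic rationals realisable in $3\lambda$: the constant-digit block acts as a barrier forcing a minimum-scale jump to sit at position $u_k$, unless a carry in the addition $k_2/b^i+y$ can propagate through the block. The dichotomy between $j \in [u_k, u_{k+1})$ and $j \in [u_{k+1}, u_{k+2})$ reflects whether $3\lambda$ already crosses into the next block, in which case an extra range of admissible scales will appear.

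Concretely, the proof then carries out a case analysis on $i$ that parallels the three cases of Lemma \ref{bi_oscillation_rational}. For $i < u_k - 1$ the potential carry terminates before reaching the block at $u_k$, so the smallest-scale $b$-adic rational in $3\lambda$ occurs exactly at scale $u_k$. For $i \geqslant u_k$ the scales $i$ and $l$ coincide because the nonzero digit at position $i$ of $k_2/b^i$ is not absorbed by the block of $y$. The delicate situation is $i = u_k - 1$, where the addition $k_2/b^i + y$ can trigger a cascade of carries through the constant-digit block, producing a $b$-adic rational at any scale $i' \in \{1,\dots, u_k - 1\}$ depending on the precise digits of $k_2$. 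The same phenomenon, translated one block further, yields the extra subcase $u_{k+1} \leqslant i \leqslant j$ when $j$ lies in the second block.

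The main obstacle is the bookkeeping of these carries: one has to show both that every scale $i'$ in the declared range is attainable for some admissible $k_2$, and that no finer scale can ever occur. The plan is to verify this via aligned $b$-ary-expansion diagrams in the spirit of Figures \ref{bianryi1<m} and \ref{bianryi1>m}, distinguishing the three possibilities $\varepsilon_{u_k}=0$, $\varepsilon_{u_k}=b-1$, and $\varepsilon_{u_k}\in\mathcal{A}\setminus\{0,b-1\}$, and, when necessary, switching between the two $b$-ary representation conventions recalled after \eqref{numx} so as to track carry propagation consistently across blocks.
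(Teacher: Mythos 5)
Your proposal follows essentially the same route as the paper's proof: reduce via Proposition \ref{Propsecondorderoscillation} to locating the first discontinuity of $L_{\alpha_1}^b$ in $3\lambda$ through the $b$-ary expansion of $\frac{k_2}{b^i}+y$, then split into the cases $i<u_k-1$, $i=u_k-1$, $u_k\leqslant i<u_{k+1}$, and $u_{k+1}\leqslant i\leqslant j$ according to whether a carry can propagate through the constant-digit block, exactly as the paper does (the paper treats only the zero-block case explicitly and notes the $(b-1)$-block case is symmetric). The plan is sound and matches the paper's argument in structure and substance.
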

\begin{proof}

Observe that $d_{\lambda}^{(2)}\sim\Delta\left(\frac{k_2}{b^{i}}\right)$. In Case 1, the first $b$-adic rational number at which $L_{\alpha_2}^{b,y}$ is discontinuous within $3\lambda$ is $\frac{k_2}{b^i}$, whose $b$-ary expansion satisfies $\varepsilon_i\neq 0$ and $\varepsilon_t=0$ for all $t>i$. Recall that $j$ is the scale of the $b$-adic interval $\lambda$. It holds that the smallest $b$-adic rational number in $\lambda$ is $\frac{k}{b^j}$.  It follows that $i\leqslant j$. As for Case 2, it is sufficient to replace $0$ by $b-1$ in the following proof to obtain the same result.

We then determine the scale of jump of $L_{\alpha_1}^{b}$ in $3\lambda$ which yield the oscillation $d_{\lambda}^1$ by considering the $b$-ary expansion of $\frac{k_2}{b^{i}}+y$.
% There are four possibilities of the $b$-ary expansion of $\frac{k_2}{b^{i}}+y$. 
% That is, the oscillation $d_{\lambda}^{(1)}$ should not less than or equal to $b^{-\alpha_1 j}$.

If $i<u_k-1$, then the $b$-ary expansion of $\frac{k_2}{b^{i}}+y$ satisfies $\varepsilon_{u_k-1}\neq 0$, and $\varepsilon_t=0,$ for $u_k \leqslant t \leqslant u_{k+1}$. As shown in the following figure. It follows that the discontinuity of $L_{\alpha_1}^{b}$ in $3\lambda$ occurs at scale  ${u_k}$. According to Proposition \ref{Propsecondorderoscillation}, for $j\in[u_k,u_k+2)$, the oscillation $d_{\lambda}^{(1)}$ satisfies 
\begin{equation}
    \nonumber
     d_{\lambda}^{(1)}\sim\Delta\left(\frac{k_1}{b^{u_k}}\right)\sim b^{-\alpha_1 u_k}.
\end{equation}
\begin{figure}[H]
\centering
\begin{scaletikzpicturetowidth}{\textwidth}
\begin{tikzpicture}[scale=\tikzscale]
\draw  [black](-2,0)--(7,0);
\filldraw [black] (1.5,0) circle (2pt);
\node (b) at(-1.5,0.3){\textcolor{red}{The $b$-ary expansion of $\frac{k_2}{b^{i}}(i<u_k)$}};
\filldraw [black] (5.5,0) circle (2pt);
\node[black] at(0.9,0.3){$a'$};
\node[black] at(1.5,0.3){$0$};
\node[black] at(1.7,0.3){$0$};
\node[black] at(1.9,0.3){$0$};
\node[black] at(2.1,0.3){$0$};
\node[black] at(2.3,0.3){$0$};
\node[black] at(2.7,0.3){$\cdots$};
\node[black] at(3.1,0.3){$0$};
\node[black] at(3.3,0.3){$0$};
\node[black] at(3.5,0.3){$0$};
\node[black] at(3.7,0.3){$0$};
\node[black] at(3.9,0.3){$0$};
\node[black] at(4.3,0.3){$\cdots$};
\node[black] at(4.7,0.3){$0$};
\node[black] at(4.9,0.3){$0$};
\node[black] at(5.1,0.3){$0$};
\node[black] at(5.3,0.3){$0$};
\node[black] at(5.5,0.3){$0$};
\node[black] at(5.9,0.3){$\cdots$};
\draw  [black](-2,-1)--(7,-1);
\filldraw [black] (1.5,-1) circle (2pt);
\node (b) at(-1.5,-0.7){\textcolor{red}{The $b$-ary expansion of $y$}};
\filldraw [black] (5.5,-1) circle (2pt);
\node[black] at(1.5,-0.7){$a^*$};
\node[black] at(1.7,-0.7){$0$};
\node[black] at(1.9,-0.7){$0$};
\node[black] at(2.1,-0.7){$0$};
\node[black] at(2.3,-0.7){$0$};
\node[black] at(2.5,-0.7){$0$};
\node[black] at(2.7,-0.7){$0$};
\node[black] at(2.9,-0.7){$0$};
\node[black] at(3.1,-0.7){$0$};
\node[black] at(3.3,-0.7){$0$};
\node[black] at(3.5,-0.7){$0$};
\node[black] at(3.7,-0.7){$0$};
\node[black] at(3.9,-0.7){$0$};
\node[black] at(4.3,-0.7){$\cdots$};
\node[black] at(4.7,-0.7){$0$};
\node[black] at(4.9,-0.7){$0$};
\node[black] at(5.1,-0.7){$0$};
\node[black] at(5.3,-0.7){$0$};
\node[black] at(5.5,-0.7){$0$};
\node[black] at(5.9,-0.7){$\cdots$};
\node (b) at(-1.5,-1.7){\textcolor{red}{The $b$-ary expansion of $\frac{k_2}{b^{i}}+y$}};
\draw  [black](-2,-2)--(7,-2);
\filldraw [black] (1.5,-2) circle (2pt);
\node (b) at(1.5,-2.5){$\varepsilon_{u_k-1}$};
\filldraw [black] (5.5,-2) circle (2pt);
\node (c) at(5.5,-2.5){$\varepsilon_{u_{k+1}}$};
\node (b) at(0.9,-2.5){$\varepsilon_{i}$};
\node[black] at(1.5,-1.7){$a^*$};
\node[black] at(1.7,-1.7){$0$};
\node[black] at(1.9,-1.7){$0$};
\node[black] at(2.1,-1.7){$0$};
\node[black] at(2.3,-1.7){$0$};
\node[black] at(2.5,-1.7){$0$};
\node[black] at(2.7,-1.7){$0$};
\node[black] at(2.9,-1.7){$0$};
\node[black] at(3.1,-1.7){$0$};
\node[black] at(3.3,-1.7){$0$};
\node[black] at(3.5,-1.7){$0$};
\node[black] at(3.7,-1.7){$0$};
\node[black] at(3.9,-1.7){$0$};
\node[black] at(4.3,-1.7){$\cdots$};
\node[black] at(4.7,-1.7){$0$};
\node[black] at(4.9,-1.7){$0$};
\node[black] at(5.1,-1.7){$0$};
\node[black] at(5.3,-1.7){$0$};
\node[black] at(5.5,-1.7){$0$};
\node[black] at(5.9,-1.7){$\cdots$};
\end{tikzpicture}
\end{scaletikzpicturetowidth}
\caption{} \label{th2}
\end{figure} 
If $i=u_k-1$, then there will be two subcases of the $b$-ary expansion of $\frac{k_2}{b^i}+y$. Note that $\varepsilon_{i'}\in \mathcal{A}(\ \forall 1\leqslant i' \leqslant i)$ in the $b$-ary expansion of $\frac{k_2}{b^i}$. Let $\bar{a},\tilde{a}\in \mathcal{A}\backslash\{0\}$.
\begin{itemize}
    \item If $\varepsilon_{u_k-1}=\bar{a}$ in the $b$-ary expansion of $y$, and $\varepsilon_m=b-\bar{a}$ in the $b$-ary expansion of $\frac{k_2}{b^i}$,  then the sum $\frac{k_2}{b^i}+y$ may induce a carry in the $b$-ary expansion, where the interaction between the digits of $y$ and $\frac{k_2}{b^i}$  can propagate up to digit $u_k$. Consequently, the $b$-ary expansion of $\frac{k_2}{b^i}+y$ satisfies $\varepsilon_{i'}\neq 0$, and $\varepsilon_t=0,$ for $i' \leqslant t \leqslant u_{k+1}$ with $1\leqslant i'\leqslant u_k-2$. Then the first discontinuity in $L_{\alpha_1}^b$ may occur at scale $i'$ for all $i'\in [1,u_k-2]$
    \item If $\varepsilon_{u_k-1}=\bar{a}$ in the $b$-ary expansion of $y$, and $\varepsilon_m =\tilde{a}$  in the $b$-ary expansion of $\frac{k_2}{b^i}$, which satisfies $\tilde{a}\neq b-\bar{a}$, then
    the $b$-ary expansion of $\frac{k_2}{b^i}+y$ satisfied $\varepsilon_{u_k-1}\neq 0$, and $\varepsilon_t=0,$ for $u_k \leqslant t \leqslant u_{k+1}$. Thus, the corresponding discontinuity in $L_{\alpha_1}^{b}$
  appears at scale $u_k-1$.
\end{itemize}  
Thus, the oscillation $d_{\lambda}^{(1)}\sim\Delta\left(\frac{k_1}{b^{i'}}\right)\sim b^{-\alpha i'}$ with $1\leqslant i'\leqslant u_k-1$ for $j\in [u_k,u_{k+2})$.

If $u_k\leqslant i< u_{k+1}$, then the $b$-ary expansion of $\frac{k_2}{b^{i}}+y$ satisfies $\varepsilon_{i}\neq 0$, and $\varepsilon_t=0,$ for $i < t \leqslant u_{k+1}$. As shown in Figure \ref{th1}.  Combining Proposition \ref{Propsecondorderoscillation}, the following results of oscillation $d_{\lambda}^{(1)}$ hold. When $u_k \leqslant j< u_{k+1}\ ( \forall\ k\geqslant 1)$, it follows that the first discontinuity of $L_{\alpha_1}^{b}$ in $3\lambda$ is $\frac{k_1}{b^{i}}$ with $i\in[u_k,j]$. Then, the oscillation $d_{\lambda}^{(1)}$ satisfies that for $i\in[u_k-1,j)$
\begin{equation}
    \nonumber
     d_{\lambda}^{(1)}\sim\Delta\left(\frac{k_1}{b^{i}}\right)\sim b^{-\alpha_1 i}.
\end{equation}
As a result, when $u_{k+1} \leqslant j< u_{k+2}\ ( \forall\ k\geqslant 1),$  the discontinuity of $L_{\alpha_1}^{b}$ in $3\lambda$ occurs at scale $i$ with $i\in[u_k,u_{k+1}]$. Then, the oscillation $d_{\lambda}^{(1)}$ satisfies that for $i\in[u_k-1,u_{k+1})$
\begin{equation}
    \nonumber
     d_{\lambda}^{(1)}\sim\Delta\left(\frac{k_1}{b^{i}}\right)\sim b^{-\alpha_1 i}.
\end{equation}
\begin{figure}[H]
\centering
\begin{scaletikzpicturetowidth}{\textwidth}
\begin{tikzpicture}[scale=\tikzscale]
\draw  [black](-2,0)--(7,0);
\filldraw [black] (1.5,0) circle (2pt);
\node (b) at(-1.5,0.3){\textcolor{red}{The $b$-ary expansion of $\frac{k_2}{b^{i}}(u_k\leqslant i<v_k)$}};
\filldraw [black] (5.5,0) circle (2pt);
\node[black] at(2.1,0.3){$a'$};
\node[black] at(2.3,0.3){$0$};
\node[black] at(2.7,0.3){$\cdots$};
\node[black] at(3.1,0.3){$0$};
\node[black] at(3.3,0.3){$0$};
\node[black] at(3.5,0.3){$0$};
\node[black] at(3.7,0.3){$0$};
\node[black] at(3.9,0.3){$0$};
\node[black] at(4.3,0.3){$\cdots$};
\node[black] at(4.7,0.3){$0$};
\node[black] at(4.9,0.3){$0$};
\node[black] at(5.1,0.3){$0$};
\node[black] at(5.3,0.3){$0$};
\node[black] at(5.5,0.3){$0$};
\node[black] at(5.9,0.3){$\cdots$};
\draw  [black](-2,-1)--(7,-1);
\filldraw [black] (1.5,-1) circle (2pt);
\node (b) at(-1.5,-0.7){\textcolor{red}{The $b$-ary expansion of $y$}};
\filldraw [black] (5.5,-1) circle (2pt);
\node[black] at(1.5,-0.7){$a^*$};
\node[black] at(1.7,-0.7){$0$};
\node[black] at(1.9,-0.7){$0$};
\node[black] at(2.1,-0.7){$0$};
\node[black] at(2.3,-0.7){$0$};
\node[black] at(2.5,-0.7){$0$};
\node[black] at(2.7,-0.7){$0$};
\node[black] at(2.9,-0.7){$0$};
\node[black] at(3.1,-0.7){$0$};
\node[black] at(3.3,-0.7){$0$};
\node[black] at(3.5,-0.7){$0$};
\node[black] at(3.7,-0.7){$0$};
\node[black] at(3.9,-0.7){$0$};
\node[black] at(4.3,-0.7){$\cdots$};
\node[black] at(4.7,-0.7){$0$};
\node[black] at(4.9,-0.7){$0$};
\node[black] at(5.1,-0.7){$0$};
\node[black] at(5.3,-0.7){$0$};
\node[black] at(5.5,-0.7){$0$};
\node[black] at(5.9,-0.7){$\cdots$};
\node (b) at(-1.5,-1.7){\textcolor{red}{The $b$-ary expansion of $\frac{k_2}{b^{i}}+y$}};
\draw  [black](-2,-2)--(7,-2);
\filldraw [black] (1.5,-2) circle (2pt);
\node (b) at(1.5,-2.5){$\varepsilon_{u_k-1}$};
\filldraw [black] (5.5,-2) circle (2pt);
\node (c) at(5.5,-2.5){$\varepsilon_{u_{k+1}}$};
\node (b) at(2.1,-2.5){$\varepsilon_{i}$};
\node[black] at(2.1,-1.7){$a'$};
\node[black] at(2.3,-1.7){$0$};
\node[black] at(2.5,-1.7){$0$};
\node[black] at(2.7,-1.7){$0$};
\node[black] at(2.9,-1.7){$0$};
\node[black] at(3.1,-1.7){$0$};
\node[black] at(3.3,-1.7){$0$};
\node[black] at(3.5,-1.7){$0$};
\node[black] at(3.7,-1.7){$0$};
\node[black] at(3.9,-1.7){$0$};
\node[black] at(4.3,-1.7){$\cdots$};
\node[black] at(4.7,-1.7){$0$};
\node[black] at(4.9,-1.7){$0$};
\node[black] at(5.1,-1.7){$0$};
\node[black] at(5.3,-1.7){$0$};
\node[black] at(5.5,-1.7){$0$};
\node[black] at(5.9,-1.7){$\cdots$};
\end{tikzpicture}
\end{scaletikzpicturetowidth}
\caption{} \label{th1}
\end{figure} 

If $u_{k+1}\leqslant i< u_{k+2}$, then taking the same analysis as $i=u_{k}-1 $, it follows that the $b$-ary expansion of $\frac{k_2}{b^{i}}+y$ satisfies $\varepsilon_{i'}\neq 0$, and $\varepsilon_t=0,$ for $i' < t \leqslant u_{k+2}$ with $1\leqslant i'\leqslant i$. Combining that the scale of $\lambda$ is j, it holds that for $i\in[u_{k+1},j]$, the oscillation $d_{\lambda}^{(1)}\sim\Delta\left(\frac{k_1}{b^{i'}}\right)\sim b^{-\alpha i'}$ with $1\leqslant i'\leqslant i$ for $j\in [u_{k+1},u_{k+2})$.

The lemma then follows by analyzing all four cases based on the position of $i$ relative to the digits of the expansion of $y$.
\end{proof}

After obtaining the oscillations $d_{\lambda}^{(1)}$ and $d_{\lambda}^{(2)}$ of L\'evy functions $L_{\alpha_1}^b$ and $L_{\alpha_2}^{b,y}$ in $3\lambda$, the following lemmas give the results of the scaling function $S(r,j)$ as defined in \eqref{scalingfunction}. 
\begin{lemma} \label{lemma_structurefunc_rational}
Let $y$ be a $b$-adic rational number. Then, the scaling function $\zeta(r,j)$ for $L_{\alpha_1}^b$ and $L_{\alpha_2}^{b,y}$ is given by 
\begin{equation} \label{scalingfunctionrational}
    \begin{aligned}
      \zeta(r)=
      \left\{
             \begin{array}{ll}
            \alpha_1r_1+\alpha_2r_2,&\quad \text{if}\ 1-\alpha_1 r_1 -\alpha_2 r_2\geqslant 0,\vspace{1ex}\\
            1,&\quad \text{if}\ 1-\alpha_1 r_1 -\alpha_2 r_2 < 0,
             \end{array}
\right.
    \end{aligned}
\end{equation}
as shown below.
\end{lemma}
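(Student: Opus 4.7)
The plan is to compute $S(r,j)$ from \eqref{sructurefunction} directly, by partitioning the family $\Lambda_j$ of $b$-adic intervals at scale $j$ according to the scale $i$ of the first reduced $b$-adic rational point lying in $3\lambda$, and then using Lemma \ref{bi_oscillation_rational} to read off the joint values of the oscillations $d_\lambda^{(1)}$ and $d_\lambda^{(2)}$. Throughout, I fix $y=k/b^m$ with $k\wedge b=1$ and take $j\gg m$.

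First, for each $1\leqslant i\leqslant j$ I let $N_i(j)$ denote the number of $\lambda\in\Lambda_j$ for which $i$ is the minimal scale with some reduced rational in $3\lambda$. Since reduced rationals at scale $i$ number $\sim b^i$ and each such rational lies in $3\lambda$ for only $O(1)$ choices of $\lambda\in\Lambda_j$, a straightforward count gives $N_i(j)\sim b^i$. On such a $\lambda$, Proposition \ref{Propsecondorderoscillation} yields $d_\lambda^{(2)}\sim b^{-\alpha_2 i}$, while Lemma \ref{bi_oscillation_rational} gives
\begin{equation*}
d_\lambda^{(1)}\sim
\begin{cases}
b^{-\alpha_1 m}, & \text{if } i<m,\\
b^{-\alpha_1 i'}\ (\text{some } 1\leqslant i'\leqslant m), & \text{if } i=m,\\
b^{-\alpha_1 i}, & \text{if } i>m,
\end{cases}
\end{equation*}
where in the middle case the number of $\lambda$'s in each subcategory is bounded by a constant depending only on $m$ and $b$.

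Next, I will split \eqref{sructurefunction} according to these three ranges. The contributions from $i<m$ and $i=m$ involve only finitely many configurations (a set independent of $j$), hence produce a term of the form $C(m,r)\,b^{-j}$. Writing $s:=1-\alpha_1 r_1-\alpha_2 r_2$, the dominant contribution from $i>m$ equals, up to multiplicative constants,
\begin{equation*}
b^{-j}\sum_{i=m+1}^{j} b^{i}\cdot b^{-i(\alpha_1 r_1+\alpha_2 r_2)} \;=\; b^{-j}\sum_{i=m+1}^{j} b^{is}.
\end{equation*}
If $s>0$, this geometric sum is dominated by $i=j$ and $S(r,j)\sim b^{-j(1-s)}$, giving $\zeta(r)=1-s=\alpha_1 r_1+\alpha_2 r_2$. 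If $s=0$, the sum equals $j-m$ and $S(r,j)\sim j\cdot b^{-j}$, still yielding $\zeta(r)=1=\alpha_1 r_1+\alpha_2 r_2$. If $s<0$, the series converges to a constant and $S(r,j)\sim b^{-j}$, so $\zeta(r)=1$; the $O(b^{-j})$ contributions from $i\leqslant m$ do not alter this. Taking $\liminf$ in \eqref{scalingfunction} then yields \eqref{scalingfunctionrational}.

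The main technical obstacle will be the combinatorial count $N_i(j)\sim b^i$ together with the verification that the contributions from $i\leqslant m$ are genuinely absorbed into an $O(b^{-j})$ error uniform in $j$; both hinge crucially on the fact that $m$ is finite, which is precisely what distinguishes the $b$-adic rational case treated here from the irrational cases to be treated subsequently.
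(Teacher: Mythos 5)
Your proposal is correct and follows essentially the same route as the paper: partition $\Lambda_j$ by the scale $i$ of the first discontinuity in $3\lambda$ (with $\sim b^i$ intervals per scale), invoke Lemma \ref{bi_oscillation_rational} for the joint oscillations, absorb the finitely many $i\leqslant m$ configurations into an $O(b^{-j})$ term, and evaluate the geometric sum $b^{-j}\sum_i b^{i(1-\alpha_1r_1-\alpha_2r_2)}$ according to the sign of the exponent. Your treatment is in fact slightly more careful than the paper's on the borderline case $1-\alpha_1r_1-\alpha_2r_2=0$ (where the sum is $\sim j$ rather than $\sim 1$, harmless for the $\liminf$) and on making the count $N_i(j)\sim b^i$ explicit.
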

\begin{figure}[H]
\centering
\begin{tikzpicture}[scale=0.5]
\fill[blue!40] (-2,6) -- (7,-3)--(8,-3)--(8,6)--(-2,6);
 \fill[green!40] (-2,6) -- (7,-3)--(-3,-3)--(-3,6)--(-2,6);
\draw [-stealth](-3,0) -- (8,0)node[below,xshift=0.8em,yshift=0.5em]{$r_1$};
\draw [-stealth](1,-3) -- (1,6)node[below,xshift=0em,yshift=1em]{$r_2$};
\node [black] at(3.8,-0.4){$\frac{1}{\alpha_1}$};
\node [black] at(0.6,2.8){$\frac{1}{\alpha_2}$};
\draw[thick,dashed] (-2,6) -- (7,-3);
\draw [->] (-1.5,5.5) -- (-1.5,7);
\node [black] at(-1.5,7.5){$1-\alpha_1r_1-\alpha_2r_2=0$};
\node [red] at(4.9,3.5){$\zeta(r)=1$};
% \node [red] at(5.8,0.5){$\zeta(r)=1+\alpha_2r_2$};
\node [red] at(0.6,0.5){$\zeta(r)=\alpha_1r_1+\alpha_2r_2$};
\end{tikzpicture}
\caption{Scaling function ($y$ is $b$-adic rational)}
\end{figure}
\begin{proof}
Based on Lemma \ref{bi_oscillation_rational}, the oscillations $d_{\lambda}^{(1)}$ and $d_{\lambda}^{(2)}$ of  $L_{\alpha_1}^b$ and $L_{\alpha_2}^{b,y}$ over the interval $3\lambda$ satisfy the following relations: if $i< m$, then
\begin{equation} \nonumber
  \begin{aligned}
      d_{\lambda}^1\sim\Delta\left(\frac{k_1}{b^{m}}\right),\ 
      d_{\lambda}^2\sim\Delta\left(\frac{k_2}{b^{i}}\right);
  \end{aligned}
\end{equation}
if $i=m$, then for all $ 1\leqslant i'\leqslant m$,
\begin{equation} \nonumber
  \begin{aligned}
      d_{\lambda}^1\sim\Delta\left(\frac{k_1}{b^{i'}}\right),\  
      d_{\lambda}^2\sim\Delta\left(\frac{k_2}{b^{m}}\right);
  \end{aligned}
\end{equation}
if $i> m$, then
\begin{equation} \nonumber
  \begin{aligned}
      d_{\lambda}^1\sim\Delta\left(\frac{k_1}{b^{i}}\right),\  
     d_{\lambda}^2\sim\Delta\left(\frac{k_2}{b^{i}}\right).
  \end{aligned}
\end{equation}

We now evaluate the structure function $S(r,j)$, defined in \eqref{sructurefunction}. For every $r=(r_1,r_2)\in \mathbb{R}^2$, it takes the form 
\begin{equation} \nonumber
    S(r,j)=b^{-j}\sum\limits_{\lambda\in\Lambda_j}\big(d_{\lambda}^{(1)}\big)^{r_1}\big(d_{\lambda}^{(2)}\big)^{r_2}.
\end{equation}
 Asymptotically, this expression simplifies to 
\begin{equation} \nonumber
\begin{aligned}
      S(r,j)&\sim b^{-j}\left[\sum\limits_{i=1}^{m}b^ib^{- \alpha_1 r_1m}b^{- \alpha_2 r_2 i}+\sum\limits_{i=1}^{m}b^ib^{- \alpha_1 r_1i}b^{- \alpha_2 r_2 m} +\sum\limits_{i=m}^{j}b^ib^{- \alpha_1 r_1i}b^{- \alpha_2 r_2 i}\right]
      \\&\sim b^{-j}\left[b^{-\alpha_1 r_1 m}\sum\limits_{i=0}^{m}b^{i(1- \alpha_2 r_2)}+ b^{-\alpha_2 r_2 m}\sum\limits_{i=0}^{m}b^{i(1- \alpha_1 r_1)}+\sum\limits_{i=m}^{j}b^{i(1- \alpha_1 r_1- \alpha_2 r_2)}\right].
\end{aligned}
\end{equation}
Since the scale of the interval $j$ is much larger than $m$, it follows  that the "main part" in the structure function $S(r,j)$ is $b^{-j}\sum\limits_{i=m}^{j}b^{i(1- \alpha_1 r_1- \alpha_2 r_2)}$.   Here, the "main part" refers to the dominant contribution as $j\to \infty$, meaning that any remaining terms are of smaller order, i.e., they satisfy $b^{-\alpha_1 r_1 m}\sum\limits_{i=0}^{m}b^{i(1- \alpha_2 r_2)}+ b^{-\alpha_2 r_2 m}\sum\limits_{i=0}^{m}b^{i(1- \alpha_1 r_1)}=o(b^{-\alpha_2 r_2 m}\sum\limits_{i=0}^{m}b^{i(1- \alpha_1 r_1)}+\sum\limits_{i=m}^{j}b^{i(1- \alpha_1 r_1- \alpha_2 r_2)})$. This notation will be used consistently throughout the paper, where the "main part" of an expression will always refer to its leading-order term in the asymptotic expansion. Hence, the structure function simplifies to
\begin{equation}
    \nonumber
     S(r,j)\sim b^{-j}\sum\limits_{i=0}^{j}b^{i(1- \alpha_1 r_1- \alpha_2 r_2)}.
\end{equation}

After that, the scaling function $\zeta(r)$ as defined in \eqref{scalingfunction} is as follows. 
\begin{itemize}
    \item If $1-\alpha_1 r_1-\alpha_2 r_2 \geqslant 0,$ then 
    \begin{equation}
        \nonumber
        \sum\limits_{i=0}^{j}b^{i(1- \alpha_1 r_1- \alpha_2 r_2)}\sim b^{j(1- \alpha_1 r_1- \alpha_2 r_2)},
    \end{equation}
which yields the scaling function
\begin{equation}\nonumber
\zeta(r)=\liminf\limits_{j\to+\infty}\frac{\log(S(r,j))}{\log(b^{-j})}=\liminf\limits_{j\to+\infty}\frac{\log\left(b^{-j+j(1- \alpha_1 r_1-\alpha_2 r_2)}  \right)}{\log(b^{-j})}
\end{equation}
\begin{equation}\nonumber
     =\liminf\limits_{j\to+\infty}\frac{\log\left(b^{-j(\alpha_1 r_1+\alpha_2 r_2)}\right)}{\log(b^{-j})}
     =\alpha_1 r_1+\alpha_2 r_2.
\end{equation}
    \item If $1-\alpha_1 r_1 -\alpha_2 r_2< 0$, then the summation remains bounded, that is,
    \begin{equation}
        \nonumber
        \sum\limits_{i=0}^{j}b^{i(1- \alpha_1 r_1- \alpha_2 r_2)}\sim 1,
    \end{equation}
and it follows that,
\begin{equation}\nonumber
\begin{aligned}
\zeta(r)=\liminf\limits_{j\to+\infty}\frac{\log(S(r,j))}{\log(b^{-j})}
=\liminf\limits_{j\to+\infty}\frac{\log\left(b^{-j}  \right)}{\log(b^{-j})}=1.
\end{aligned}
\end{equation}
\end{itemize}
Therefore, the scaling function is given by
\begin{equation} \nonumber
    \begin{aligned}
      \zeta(r)=
      \left\{
             \begin{array}{ll}
            \alpha_1r_1+\alpha_2r_2,&\quad \text{if}\ 1-\alpha_1 r_1 -\alpha_2 r_2\geqslant 0,\vspace{1ex}\\
            1,&\quad \text{if}\ 1-\alpha_1 r_1 -\alpha_2 r_2 < 0.
             \end{array}
\right.
    \end{aligned}
\end{equation}
\end{proof}

\begin{lemma} \label{lemma_structurefunc_irrational}
Suppose that $y$ is not a $b$-adic rational number. Let $\{w_k\}$  denote the sequence defined in  \eqref{w_k}.  Then the associated scaling function for $L_{\alpha_1}^b$ and $L_{\alpha_2}^{b,y}$ is as follows: If $\{w_k\}$ is bounded, then 
\begin{equation} \label{scalingfunctionirrational}
    \begin{aligned}
      \zeta(r)=
      \left\{
             \begin{array}{ll}
            \alpha_1r_1+\alpha_2r_2,&\quad \text{if}\ 1-\alpha_1 r_1 \geqslant 0\ \text{and}\  1-\alpha_2 r_2 \geqslant 0,\vspace{1ex}\\
            1+\alpha_1r_1,&\quad \text{if}\ 1-\alpha_2 r_2 < 0\ \text{and}\  \alpha_1 r_1\leqslant  \alpha_2 r_2,\vspace{1ex}
            \\
            1+\alpha_2r_2,&\quad \text{if}\ 1-\alpha_1 r_1 < 0\ \text{and}\  \alpha_1 r_1>  \alpha_2 r_2;
             \end{array}
\right.
    \end{aligned}
\end{equation}
as shown in Figure \ref{graph_scalingfunction_bound}. If $\{w_k\}$ is unbounded, then
\begin{equation} \nonumber
    \begin{aligned}
      \zeta(r)=
      \left\{
             \begin{array}{ll}
            \alpha_1r_1+\alpha_2r_2,&\quad \text{if}\ 1-\alpha_1 r_1 \geqslant 0,\ 1-\alpha_2 r_2 \geqslant 0\ \text{and}\  1-\alpha_1 r_1-\alpha_2 r_2 \geqslant 0,\vspace{1ex}\\
            1+\alpha_1r_1,&\quad \text{if}\ 1-\alpha_2 r_2 < 0\ \text{and}\  r_1< 0,\vspace{1ex}
            \\
            1+\alpha_2r_2,&\quad \text{if}\ 1-\alpha_1 r_1 < 0\ \text{and}\ r_2 < 0,\vspace{1ex}
            \\
            1,&\quad \text{if}\ 1-\alpha_1 r_1 -\alpha_2 r_2 < 0, \ r_1\geqslant 0\ \text{and}\ r_2 \geqslant 0,
             \end{array}
\right.
    \end{aligned}
\end{equation}
as shown in Figure \ref{graph_scalingfunction_unbound}.
\end{lemma}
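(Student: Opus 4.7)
The plan is to compute the structure function $S(r,j)$ by classifying the intervals $\lambda\in\Lambda_j$ according to the scale $i$ at which the first discontinuity of $L_{\alpha_2}^{b,y}$ occurs inside $3\lambda$, and then to substitute the corresponding value of $d_\lambda^{(1)}$ provided by Lemma \ref{bi_oscillation_irrational}. Since for each $1\leqslant i\leqslant j$ the number of intervals $\lambda\in\Lambda_j$ whose first shifted $b$-adic rational in $3\lambda$ lies precisely at scale $i$ is $\sim b^i$, this substitution yields
\begin{equation*}
S(r,j)\;\sim\; b^{-j}\sum_{i=1}^{j} b^i\, b^{-\alpha_2 r_2 i}\,\bigl(d_\lambda^{(1)}(i)\bigr)^{r_1}.
\end{equation*}
Fix $k$ with $j\in[u_k,u_{k+2})$. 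Lemma \ref{bi_oscillation_irrational} then splits the sum into three geometric pieces: a \emph{diagonal} piece where $d_\lambda^{(1)}\sim b^{-\alpha_1 i}$ over the range $u_k\leqslant i\leqslant \min(j,u_{k+1}-1)$; a \emph{$d^{(1)}$-saturated} piece where $d_\lambda^{(1)}\sim b^{-\alpha_1 u_k}$ is frozen for $i<u_k-1$; and a \emph{carry} piece at $i=u_k-1$ (and, when $j\geqslant u_{k+1}$, also at $i\in[u_{k+1},j]$) where the value of $d_\lambda^{(1)}$ spreads across scales $i'\leqslant u_k-1$. A combinatorial count of $b$-ary carries shows that the number of intervals whose carry reaches scale $i'$ is $\sim b^{i'}$, so the carry piece reduces to $b^{-j}\,b^{-\alpha_2 r_2(u_k-1)}\sum_{i'=1}^{u_k-1}b^{i'(1-\alpha_1 r_1)}$. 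Each geometric sum is then evaluated according to the signs of $1-\alpha_1 r_1$, $1-\alpha_2 r_2$, and $1-\alpha_1 r_1-\alpha_2 r_2$, producing an explicit leading exponent of $b^{-j}$ for each piece.

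When $\{w_k\}$ is bounded by some $W$, the constraint $j\in[u_k,u_{k+2})$ forces $u_k=j+O(1)$, so each factor $b^{u_k}$ is comparable to $b^j$ and the three pieces produce at most three candidate exponents $\alpha_1 r_1+\alpha_2 r_2$, $1+\alpha_1 r_1$, and $1+\alpha_2 r_2$; a direct case analysis shows that the smaller of $1+\alpha_i r_i$ is selected precisely when $1-\alpha_i r_i<0$ (the other saturated exponent being automatically dominated via the discriminating rule $\alpha_1 r_1\lessgtr \alpha_2 r_2$), and that the diagonal exponent wins as soon as $r_i\leqslant 1/\alpha_i$ for both $i$, reproducing \eqref{scalingfunctionirrational}. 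When $\{w_k\}$ is unbounded, the ratio $u_k/j$ is no longer pinned near $1$: along a subsequence $j_n\in[u_{k_n},u_{k_n+1})$ one may impose $u_{k_n}/j_n\to 0$, which collapses every saturated factor $b^{-\alpha_i r_i u_{k_n}}$ to a constant and forces the corresponding exponent of $b^{-j_n}$ to tend to $1$ whenever $r_i\geqslant 0$. Conversely, taking $j_n=u_{k_n}$ recovers the bounded-case exponents along that subsequence. Combining the two subsequence choices in the $\liminf$ of \eqref{scalingfunction} produces the four stated regions; in particular, the new fourth region $\zeta(r)=1$ appears exactly when $r_1,r_2\geqslant 0$ and $\alpha_1 r_1+\alpha_2 r_2>1$, because there the $u_{k_n}/j_n\to 0$ subsequence strictly beats every other candidate, while the saturated exponents $1+\alpha_i r_i$ reappear only when the associated $r_i$ is negative (so that making $u_{k_n}/j_n$ large is advantageous).

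The main technical obstacle is the quantitative handling of the carry piece at $i=u_k-1$: since $b$-ary addition can cascade through arbitrarily many digits, one must verify that prescribing a carry propagating from $u_k-1$ up to $i'$ amounts to matching exactly $u_k-1-i'$ digits of the shifted rational with the $b$-complements of the corresponding digits of $y$, leaving $i'$ digits free and hence yielding $\sim b^{i'}$ intervals. This is what allows the carry piece to assume the same geometric shape as the $d^{(1)}$-saturated piece with the roles of $r_1$ and $r_2$ interchanged, and it is the step where the precise block-structure of the digits of $y$ genuinely enters the scaling function; once this count is in place, the remainder of the proof is a routine enumeration of which of the three (resp.\ four) geometric sums dominates in each sub-region of the $(r_1,r_2)$-plane.
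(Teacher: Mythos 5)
Your proposal is correct and follows essentially the same route as the paper: decompose $S(r,j)$ by the scale $i$ of the first discontinuity of $L_{\alpha_2}^{b,y}$ in $3\lambda$, substitute the values of $d_\lambda^{(1)}$ from Lemma \ref{bi_oscillation_irrational} to get the diagonal, saturated and carry pieces, observe that boundedness of $\{w_k\}$ pins $u_k=j+O(1)$ while unboundedness allows a subsequence with $u_{k}/j\to 0$, and take the minimum of the resulting candidate exponents in the $\liminf$. The only place you go beyond the paper is in making explicit the combinatorial count ($\sim b^{i'}$ intervals whose carry propagates to scale $i'$), which the paper uses implicitly when writing its sums; everything else matches.
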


\begin{figure}[ht]
\centering
\begin{subfigure}{0.48\textwidth}
\centering
\begin{tikzpicture}[scale=0.45]
\fill[blue!40] (-3,6) -- (8,6)--(4,3)--(-3,3)--(-3,6);
\fill[green!40] (4,-3) -- (8,-3)--(8,6)--(4,3)--(4,-3);
\fill[red!40] (-3,3) -- (1,3)--(4,3)--(4,-3)--(-3,-3);
\draw [-stealth](-3,0) -- (8,0)node[below,xshift=0.8em,yshift=0.5em]{$r_1$};
\draw [-stealth](1,-3) -- (1,6)node[below,xshift=0em,yshift=1em]{$r_2$};
\node [black] at(3.8,-0.4){$\frac{1}{\alpha_1}$};
\node [black] at(0.6,2.8){$\frac{1}{\alpha_2}$};
\draw[thick,dashed] (4,3) -- (8,6);
\draw [->] (7.7,5.8) -- (8.6,5.8);
\node [black] at(10.8,5.8){$\alpha_1r_1=\alpha_2r_2$};
\node [red] at(0.9,4.5){$\zeta(r)=1+\alpha_1r_1$};
\node [red] at(7,0.5){$\zeta(r)=1+\alpha_2r_2$};
\node [red] at(0.6,0.5){$\zeta(r)=\alpha_1r_1+\alpha_2r_2$};
\end{tikzpicture}
\subcaption{$\{w_k\}$ bounded}
\label{graph_scalingfunction_bound}
\end{subfigure}
\hfill
\begin{subfigure}{0.48\textwidth}
\centering
\begin{tikzpicture}[scale=0.45]
\fill[red!40] (-3,-3)--(4,-3)--(4,0)--(1,3)-- (-3,3);
\fill[green!40] (4,-3)--(8,-3)--(8,0)--(4,0);
\fill[blue!40] (1,3)-- (1,6)--(-3,6)--(-3,3);
\fill[yellow!40] (1,6)-- (1,3)--(4,0)-- (8,0)--(8,6);
\draw[thick,dashed] (-3,3) -- (1,3);
\draw[thick,dashed] (4,-3) -- (4,0);
\draw[thick,dashed] (1,3) -- (4,0);
\node [red] at(0.9,0.2){$\alpha_1r_1+\alpha_2r_2$};  
\node [red] at(6,-1.2){$1+\alpha_2r_2$}; 
\node [red] at(-0.9,4.5){$1+\alpha_1r_1$};  
\node [red] at(5,3){$1$};
\draw [-stealth](-3,0) -- (8,0)node[below,xshift=0.8em,yshift=0.5em]{$r_1$};
\draw [-stealth](1,-3) -- (1,6)node[below,xshift=0em,yshift=1em]{$r_2$};
\node [black] at(3.7,-0.4){$\frac{1}{\alpha_1}$};
\node [black] at(0.6,2.8){$\frac{1}{\alpha_2}$};
\end{tikzpicture}
\subcaption{$\{w_k\}$ unbounded}
\label{graph_scalingfunction_unbound}
\end{subfigure}
\caption{Scaling function ($y$ is non-$b$-adic rational)}
\label{fig:scalingfunction-total}
\end{figure}
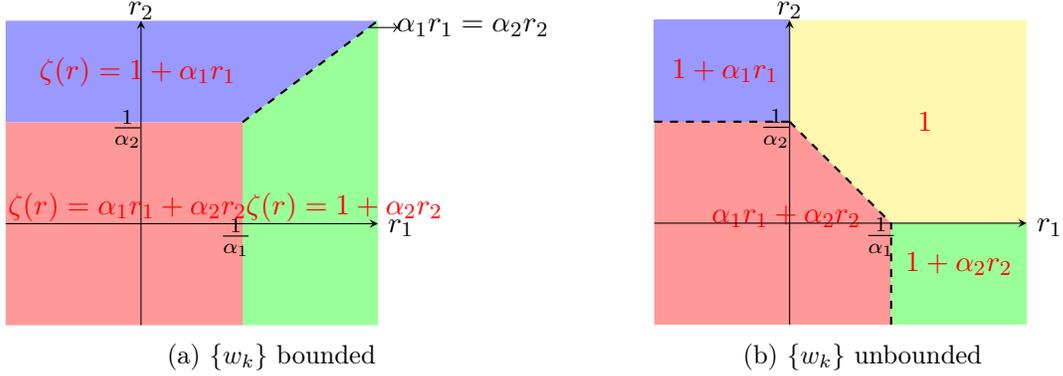

\begin{proof}
Recall from \eqref{rst} that $w_k=u_{k+1}-u_k$. We begin by analyzing the case where the sequence $\{w_k\}$ is bounded. According to Lemma \ref{bi_oscillation_irrational}, the oscillations $d_{\lambda}^{(1)}$ and $d_{\lambda}^{(2)}$ of  $L_{\alpha_1}^b$ and $L_{\alpha_2}^{b,y}$ over $3\lambda$  fall into one of two cases, depending on the dyadic scale $j$ of the interval.

\begin{itemize}
    \item  In the case $j\in[u_k,u_{k+1})$, the oscillation behavior can be classified as follows: if $i<u_k-1$, then
\begin{equation} \nonumber
  \begin{aligned}
      d_{\lambda}^{(1)}\sim\Delta\left(\frac{k_1}{b^{u_k}}\right),  \quad \mbox{ and } \quad  d_{\lambda}^{(2)}\sim\Delta\left(\frac{k_2}{b^{i}}\right);
  \end{aligned}
\end{equation}
if $i=u_k-1$, then
\begin{equation} \nonumber
  \begin{aligned}
      \forall 1\leqslant i'\leqslant u_k-1, \ d_{\lambda}^{(1)}\sim\Delta\left(\frac{k_1}{b^{i'}}\right), \quad \mbox{ and } \quad  d_{\lambda}^{(2)}\sim\Delta\left(\frac{k_2}{b^{u_k-1}}\right);
  \end{aligned}
\end{equation}
if $u_k-1<i\leqslant j$, then
\begin{equation} \nonumber
  \begin{aligned}
      d_{\lambda}^{(1)}\sim\Delta\left(\frac{k_1}{b^{i}}\right),  \quad \mbox{ and } \quad  d_{\lambda}^{(2)}\sim\Delta\left(\frac{k_2}{b^{i}}\right).
  \end{aligned}
\end{equation}
\item When $u_{k+1} < j< u_{k+2}\ (\forall\ k\geqslant 1)$, the oscillations $d_{\lambda}^{(1)}$ and $d_{\lambda}^{(2)}$ are shown below.
If $i<u_k-1$, then
\begin{equation} \nonumber
  \begin{aligned}
      d_{\lambda}^{(1)}\sim\Delta\left(\frac{k_1}{b^{u_k}}\right),  \quad \mbox{ and } \quad  d_{\lambda}^{(2)}\sim\Delta\left(\frac{k_2}{b^{i}}\right);
  \end{aligned}
\end{equation}
if $i=u_k-1$, then
\begin{equation} \nonumber
  \begin{aligned}
     \forall 1\leqslant i'\leqslant u_k-1, \ d_{\lambda}^{(1)}\sim\Delta\left(\frac{k_1}{b^{i'}}\right), \quad \mbox{ and } \quad  d_{\lambda}^{(2)}\sim\Delta\left(\frac{k_2}{b^{u_k-1}}\right);
  \end{aligned}
\end{equation}
if $u_k\leqslant i<u_{k+1}$, then
\begin{equation} \nonumber
  \begin{aligned}
      d_{\lambda}^{(1)}\sim\Delta\left(\frac{k_1}{b^{i}}\right),  \quad \mbox{ and } \quad  d_{\lambda}^{(2)}\sim\Delta\left(\frac{k_2}{b^{i}}\right);
  \end{aligned}
\end{equation}
if $u_{k+1}\leqslant i<j$, then 
\begin{equation} \nonumber
  \begin{aligned}
     \forall 1\leqslant i'\leqslant i,\  d_{\lambda}^{(1)}\sim\Delta\left(\frac{k_1}{b^{i'}}\right),  \quad \mbox{ and } \quad  d_{\lambda}^{(2)}\sim\Delta\left(\frac{k_2}{b^{i}}\right);
  \end{aligned}
\end{equation}
\end{itemize} 

Next, we determine the scaling function under the assumption that the sequence $\{w_k\}$ is bounded. Let $j\in [u_k, u_{k+1})$. According to the definition of the structure function $S(r,j)$ given in in \eqref{sructurefunction}, it follows that
\begin{equation}
    \begin{aligned}
        \label{structurefunctionbound1}
           S(r,j)&=b^{-j}\sum\limits_{\lambda\in\Lambda_j}\big(d_{\lambda}^{(1)}\big)^{r_1}\big(d_{\lambda}^{(2)}\big)^{r_2}
      \\&\sim b^{-j}\left[\sum\limits_{i=1}^{u_k-2}b^ib^{- \alpha_1 r_1 u_k}b^{- \alpha_2 r_2 i}+\sum\limits_{i=1}^{u_k-1}b^ib^{- \alpha_1 r_1i}b^{- \alpha_2 r_2 u_k}+\sum\limits_{i=u_k}^{j}b^ib^{- \alpha_1 r_1i}b^{- \alpha_2 r_2 i} \right]
      \\&\sim b^{-j}\left[b^{-\alpha_1 r_1 u_k}\sum\limits_{i=0}^{u_k}b^{i(1- \alpha_2 r_2)}+ b^{-\alpha_2 r_2 u_k}\sum\limits_{i=0}^{u_k}b^{i(1- \alpha_1 r_1)}+\sum\limits_{i=u_k}^{j}b^{i(1- \alpha_1 r_1- \alpha_2 r_2)}\right].
    \end{aligned}
\end{equation}
Since $\{w_k\}$ is bounded, the "main part" in the structure function is $$b^{-j}\left[b^{-\alpha_1 r_1 u_k}\sum\limits_{i=0}^{u_k}b^{i(1- \alpha_2 r_2)}+ b^{-\alpha_2 r_2 u_k}\sum\limits_{i=0}^{u_k}b^{i(1- \alpha_1 r_1)}\right].$$ Thereafter,  for all $ r = (r_1,r_2)\in \mathbb{R}^2,$  it holds that
\begin{equation} \label{structurefunctionbounded}
\begin{aligned}
      S(r,j)\sim  b^{-j(1+\alpha_1 r_1)}\sum\limits_{i=0}^{j}b^{i(1- \alpha_2 r_2)}+b^{-j(1+\alpha_2 r_2)}\sum\limits_{i=0}^{j}b^{i(1- \alpha_1 r_1)}.
\end{aligned}
\end{equation}
Let $u_{k+1} < j< u_{k+2}$. By the definition of the structure function, it holds that
\begin{equation}
    \begin{aligned}
        \label{structurefunctionbound2}
           S(r,j)&=b^{-j}\sum\limits_{\lambda\in\Lambda_j}\big(d_{\lambda}^{(1)}\big)^{r_1}\big(d_{\lambda}^{(2)}\big)^{r_2}
     \\&\sim b^{-j}\left[\sum\limits_{i=1}^{u_k-2}b^ib^{- \alpha_1 r_1u_k}b^{- \alpha_2 r_2 i}+\sum\limits_{i=1}^{u_k-1}b^ib^{- \alpha_1 r_1i}b^{- \alpha_2 r_2 u_k}+\sum\limits_{i=u_k}^{u_{k+1}}b^ib^{- \alpha_1 r_1i}b^{- \alpha_2 r_2 i} \right.
\\&\quad+\left.\sum\limits_{i'=1}^{i}\sum\limits_{i=u_{k+1}}^{j}b^ib^{- \alpha_1 r_1i'}b^{- \alpha_2 r_2 i}\right]
      \\&= b^{-j}\left[b^{-\alpha_1 r_1 u_k}\sum\limits_{i=0}^{u_k}b^{i(1- \alpha_2 r_2)}+ b^{-\alpha_2 r_2 u_k}\sum\limits_{i=0}^{u_k}b^{i(1- \alpha_1 r_1)}+\sum\limits_{i=u_k}^{u_{k+1}}b^ib^{- \alpha_1 r_1i}b^{- \alpha_2 r_2 i} \right.
\\&\quad+\left.\sum\limits_{i'=1}^{i}\sum\limits_{i=u_{k+1}}^{j}b^ib^{- \alpha_1 r_1i'}b^{- \alpha_2 r_2 i}\right]
    \end{aligned}
\end{equation}
As $\{w_k\}$ is bounded, the "main part" in the structure function is $$b^{-j}\left[b^{-\alpha_1 r_1 u_k}\sum\limits_{i=0}^{u_k}b^{i(1- \alpha_2 r_2)}+ b^{-\alpha_2 r_2 u_k}\sum\limits_{i=0}^{u_k}b^{i(1- \alpha_1 r_1)}+\sum\limits_{i'=1}^{i}\sum\limits_{i=u_{k+1}}^{j}b^ib^{- \alpha_1 r_1i'}b^{- \alpha_2 r_2 i}\right].$$ Hence, for all $ r = (r_1,r_2)\in \mathbb{R}^2,$ the same structure function holds as \eqref{structurefunctionbounded}.

Then, we determine the scaling function $\zeta(r)$ as defined in \eqref{scalingfunction} when $\{w_k\}$ is bounded.
\begin{itemize}
    \item When $1-\alpha_1 r_1 \geqslant 0$ and $ 1-\alpha_2 r_2 \geqslant 0,$ the structure function behaves as \eqref{structurefunctionbounded} is 
    \begin{equation}
        \nonumber
         S(r,j)\sim b^{-j(1+\alpha_1 r_1)}b^{j(1- \alpha_2 r_2)}+ b^{-j(1+\alpha_2 r_2)}b^{j(1- \alpha_1 r_1)}.
    \end{equation}
As a consequence, the scaling function is given by
\begin{equation}\nonumber
\begin{aligned}
     \zeta(r)&=\liminf\limits_{j\to+\infty}\frac{\log(S(r,j))}{\log(b^{-j})}\\&=\liminf\limits_{j\to+\infty}\frac{\log\left(b^{-j(1+\alpha_1 r_1)+j(1- \alpha_2 r_2)} +b^{-j(1+\alpha_2 r_2)+j(1- \alpha_1 r_1)}  \right)}{\log(b^{-j})}
     \\&=\liminf\limits_{j\to+\infty}\frac{\log\left(b^{-j(\alpha_1 r_1+\alpha_2 r_2)}\right)}{\log(b^{-j})}
      \\&=\alpha_1 r_1+\alpha_2 r_2.
\end{aligned}
\end{equation}
    \item When $1-\alpha_1 r_1 < 0$ and $ 1-\alpha_2 r_2 \geqslant 0.$ It holds that
    \begin{equation}
        \nonumber
         S(r,j)\sim b^{-j(1+\alpha_1 r_1)}b^{j(1- \alpha_2 r_2)}+b^{-j(1+\alpha_2 r_2)},
    \end{equation}
    and
    \begin{equation}
        \nonumber
        \alpha_1 r_1+\alpha_2 r_2>1+\alpha_2 r_2.
    \end{equation}
Since the first term decays faster, the second dominates, thus,
\begin{equation}\nonumber
\begin{aligned}
     \zeta(r)&=\liminf\limits_{j\to+\infty}\frac{\log(S(r,j))}{\log(b^{-j})}
     \\&=\liminf\limits_{j\to+\infty}\frac{\log\left(b^{-j(1+\alpha_1 r_1)+j(1- \alpha_2 r_2)}+b^{-j(1+\alpha_2 r_2)}  \right)}{\log(b^{-j})}
     \\&=\liminf\limits_{j\to+\infty}\frac{\log\left(b^{-j(\alpha_1 r_1+\alpha_2 r_2)}+b^{-j(1+\alpha_2 r_2)}\right)}{\log(b^{-j})}
      \\&=1+\alpha_2 r_2.
\end{aligned}
\end{equation}
    \item When $1-\alpha_1 r_1 \geqslant 0$ and $ 1-\alpha_2 r_2<0.$ It follows that
    \begin{equation}
        \nonumber
         S(r,j)\sim b^{-j(1+\alpha_1 r_1)}+b^{-j(1+\alpha_2 r_2)}b^{j(1- \alpha_1 r_1)},
    \end{equation}
    and
    \begin{equation}
        \nonumber
        \alpha_1 r_1+\alpha_2 r_2>1+\alpha_1 r_1.
    \end{equation}
   Here, the second term decays faster than the first, hence,
\begin{equation}\nonumber
\begin{aligned}
     \zeta(r)&=\liminf\limits_{j\to+\infty}\frac{\log(S(r,j))}{\log(b^{-j})}\\&=\liminf\limits_{j\to+\infty}\frac{\log\left(b^{-j(1+\alpha_1 r_1)}+b^{-j(1+\alpha_2 r_2)}b^{j(1- \alpha_1 r_1)}  \right)}{\log(b^{-j})}
     \\&=\liminf\limits_{j\to+\infty}\frac{\log\left(b^{-j(1+\alpha_1 r_1)}+b^{-j(\alpha_1 r_1+\alpha_2 r_2)}\right)}{\log(b^{-j})}
      \\&=1+\alpha_1 r_1.
\end{aligned}
\end{equation}
    \item When $1-\alpha_1 r_1 < 0$ and $ 1-\alpha_2 r_2< 0.$ The structure function satisfies
    \begin{equation}
        \nonumber
         S(r,j)\sim b^{-j(1+\alpha_1 r_1)}+ b^{-j(1+\alpha_2 r_2)},
    \end{equation}
    and
\begin{equation}\nonumber
\begin{aligned}
     \zeta(r)=\liminf\limits_{j\to+\infty}\frac{\log(S(r,j))}{\log(b^{-j})}=\liminf\limits_{j\to+\infty}\frac{\log\left(b^{-j(1+\alpha_1 r_1)} +b^{-j(1+\alpha_2 r_2)}  \right)}{\log(b^{-j})}.
\end{aligned}
\end{equation}
In this case, the slower decaying term dominates. Thus,
\begin{equation} \nonumber
    \begin{aligned}
      \zeta(r)=
      \left\{
             \begin{array}{ll}
            1+\alpha_1r_1,&\quad \text{if}\ \alpha_1 r_1\leqslant  \alpha_2 r_2 ,\vspace{1ex}\\
            1+\alpha_2r_2,&\quad \text{if}\ \alpha_1 r_1 > \alpha_2 r_2.
             \end{array}
\right.
    \end{aligned}
\end{equation}
\end{itemize}
Thus, when $\{w_k\}$ is bounded, the scaling function is given by 
\begin{equation} \nonumber
    \begin{aligned}
      \zeta(r)=
      \left\{
             \begin{array}{ll}
            \alpha_1r_1+\alpha_2r_2,&\quad \text{if}\ 1-\alpha_1 r_1 \geqslant 0\ \text{and}\  1-\alpha_2 r_2 \geqslant 0,\vspace{1ex}\\
            1+\alpha_1r_1,&\quad \text{if}\ 1-\alpha_2 r_2 < 0\ \text{and}\  \alpha_1 r_1\leqslant  \alpha_2 r_2,\vspace{1ex}
            \\
            1+\alpha_2r_2,&\quad \text{if}\ 1-\alpha_1 r_1 < 0\ \text{and}\  \alpha_1 r_1>  \alpha_2 r_2.
             \end{array}
\right.
    \end{aligned}
\end{equation}

\bigskip
We now turn to the case where the sequence $\{w_k\}$ is unbounded and determine the corresponding scaling function $\zeta(r)$. Observe that one can extract an increasing subsequence $\{w_{k_i}\} \subset\{w_{k}\}$ for which 
\begin{equation}\nonumber
    {w_{k_i}}\to\infty \quad \mbox{ when }i\to\infty .
\end{equation}
Without causing any confusion,  the same symbols $\{w_k\}$, $\{u_k\}$ will still be used to substitute the subsequences $\{w_{k_i}\}$, $\{u_{k_i}\}$ for notational simplicity.
Assume that $\varepsilon_{u_k}\varepsilon_{u_k+1}\cdots\varepsilon_{u_{k+1}-1}=0^{u_{k+1}-u_k-2}$, and
\begin{equation}\nonumber
    \frac{u_{k}}{u_{k+1}}\to 0 \quad \mbox{ when } k\to\infty.
\end{equation}
% In other words, we have $u_{k+1}=o(u_{k}),$ where $o(\cdot)$ is the infinitesimal of higher order. 
% Then there is a sequence $\{\epsilon_{k}\}$ that converges to $0$ such that $u_{k} = \{\epsilon_{k}\}v_{k}$.

The notation $A\gtrsim_c B$ means that there exists a constant $c>2$ (constants may not be uniform) such that $A\geqslant cB$. The implicit constants $c$ are often suppressed in the following.

If $u_k \lesssim j< u_{k+1}$, then the "main part" in the structure function \eqref{structurefunctionbound1} is $b^{-j}\sum\limits_{i=u_k}^{j}b^{i(1- \alpha_1 r_1- \alpha_2 r_2)}$. Thereafter, for all $ r = (r_1,r_2)\in \mathbb{R}^2,$ the structure satisfies
\begin{equation} \nonumber
\begin{aligned}
      S(r,j)&=b^{-j}\sum\limits_{\lambda\in\Lambda_j}\big(d_{\lambda}^{(1)}\big)^{r_1}\big(d_{\lambda}^{(2)}\big)^{r_2}
      \\&\sim b^{-j}\sum\limits_{i=0}^{j}b^ib^{- \alpha_1 r_1i}b^{- \alpha_2 r_2 i}
      \\& \sim b^{-j}\sum\limits_{i=0}^{j}b^{i(1- \alpha_1 r_1- \alpha_2 r_2)}.
\end{aligned}
\end{equation}
The same scaling function is obtained as \eqref{scalingfunctionrational}.

For other values of  $j$, the same analysis is performed as for $\{w_k\}$ being a bounded sequence, the scaling function is  \eqref{scalingfunctionirrational}.

According to the definition \eqref{scalingfunction}, the scaling function should be the minimum between the results in \eqref{scalingfunctionrational} and \eqref{scalingfunctionirrational}. Hence,
\begin{equation} \nonumber
    \begin{aligned}
      \zeta(r)=
      \left\{
             \begin{array}{ll}
            \alpha_1r_1+\alpha_2r_2,&\quad \text{if}\ 1-\alpha_1 r_1 \geqslant 0,\ 1-\alpha_2 r_2 \geqslant 0\ \text{and}\  1-\alpha_1 r_1-\alpha_2 r_2 \geqslant 0,\vspace{1ex}\\
            1+\alpha_1r_1,&\quad \text{if}\ 1-\alpha_2 r_2 < 0\ \text{and}\  r_1< 0,\vspace{1ex}
            \\
            1+\alpha_2r_2,&\quad \text{if}\ 1-\alpha_1 r_1 < 0\ \text{and}\ r_2 < 0,\vspace{1ex}
            \\
            1,&\quad \text{if}\ 1-\alpha_1 r_1 -\alpha_2 r_2 < 0, \ r_1\geqslant 0\ \text{and}\ r_2 \geqslant 0.
             \end{array}
\right.
    \end{aligned}
\end{equation}
\end{proof}

We are now prepared to proceed with the proof of Theorem \ref{multitheofinal}, which characterizes the bivariate multifractal Legendre spectrum associated with $L_{\alpha_1}^b$  and $L_{\alpha_2}^{b,y}$, denoted by $\mathcal{L}_{L_{\alpha_1}^b,L_{\alpha_2}^{b,y}}(H)=\min\limits_{r} 1-\zeta(r)+H\cdot r$, where $H=(H_1,H_2)\in \mathbb{R}^2$. For notational convenience, denote 
\begin{equation} \label{l(r,h)}
    l(r,H):=1-\zeta(r)+H\cdot r.
\end{equation}
\begin{proof}[Proof of Theorem \ref{multitheofinal}]
We begin by evaluating the bivariate multifractal Legendre spectrum  of $L_{\alpha_1}^b$  and $L_{\alpha_2}^{b,y}$ when $y$ is a $b$-adic rational number. According to Lemma \ref{lemma_structurefunc_rational}, the scaling function piecewise obtained. Substituting this into $l(r,h)$, it follows that
\begin{equation} \nonumber
    \begin{aligned}
      l(r,H)=
      \left\{
             \begin{array}{ll}
            1+r_1(H_1-\alpha_1)+r_2(H_2-\alpha_2),&\quad \text{if}\ 1-\alpha_1 r_1 -\alpha_2 r_2\geqslant 0,\vspace{1ex}\\
            r_1H_1+r_2H_2,&\quad \text{if}\ 1-\alpha_1 r_1-\alpha_2 r_2 < 0.
             \end{array}
\right.
    \end{aligned}
\end{equation} 
By \eqref{mml}, the Legendre spectrum is obtained by minimizing $l(r,H)$ with respect to $r$.
\begin{itemize}
    \item If $H_1 > \alpha_1$,\  or $H_2>\alpha_2$, then the linear term $H\cdot r$ dominates, and taking $r_1\to-\infty,$ and $r_2\to-\infty$ yields $l(r,H)\to-\infty$. Consequently,
\begin{equation} \label{bivariatelegendrespectrem141y}
\mathcal{L}_{L_{\alpha_1}^b,L_{\alpha_2}^{b,y}}(H_1, H_2)= \min\limits_{r}l(r,h)=-\infty.
\end{equation}
\item If $H_1 <0$, or $H_2<0$, then increasing $r_1\to\infty$  or $r_2\to\infty$ forces the inner product $H\cdot r\to -\infty$, while the rest of the expression remains bounded. Hence,
\begin{equation} \label{bivariatelegendrespectrem142y}
\mathcal{L}_{L_{\alpha_1}^b,L_{\alpha_2}^{b,y}}(H_1, H_2)= \min\limits_{r}l(r,h)=-\infty.
\end{equation}
\item If $(H_1, H_2) \in [0,\alpha_1]\times [0,\alpha_2]$, then the function $l(r,h)$ is convex, and the constraint $1-\alpha_1r_1-\alpha_2r_2=0$ characterizes the boundary where the minimum is attained. Solving this constraint yields $r_1=\frac{1-\alpha_2r_2}{\alpha_1}.$ Substituting into $l(r,h)$, it follows that
\begin{equation}
\begin{aligned}
       \label{l(r,h)_rational}
l(r,h)=H_1r_1+H_2r_2=H_1\left(\frac{1-\alpha_2r_2}{\alpha_1}\right)+H_2r_2=\frac{H_1}{\alpha_1}+r_2\left(H_2-\frac{\alpha_2}{\alpha_1}H_1\right).
\end{aligned}
\end{equation}
When $H_2> \frac{\alpha_2}{\alpha_1}H_1$, the coefficient of $r_2$ in \eqref{l(r,h)_rational} is positive. Hence, by letting $r_2\to-\infty$, it holds that $l(r,h)\to -\infty$, which implies
\begin{equation} \label{bivariatelegendrespectrem1432y}
\mathcal{L}_{L_{\alpha_1}^b,L_{\alpha_2}^{b,y}}(H_1, H_2)= \min\limits_{r}l(r,h)=-\infty.
\end{equation}
When $H_2<\frac{\alpha_2}{\alpha_1}H_1$, the coefficient of $r_2$ in \eqref{l(r,h)_rational} is negative. Then, letting $r_2 \to \infty$ forces $l(r,h)\to -\infty$, yielding
\begin{equation} \label{bivariatelegendrespectrem143y}
\mathcal{L}_{L_{\alpha_1}^b,L_{\alpha_2}^{b,y}}(H_1, H_2)= \min\limits_{r}l(r,h)=-\infty.
\end{equation}
When $H_2= \frac{\alpha_2}{\alpha_1}H_1$, the coefficient of $r_2$ in \eqref{l(r,h)_rational} vanishes. As a result, $l(r,h)=\frac{H_1}{\alpha_1}$ becomes constant with respect to $r_2$, and its minimum is attained at any point on the constraint line. Hence,
\begin{equation} \label{bivariatelegendrespectrem144y}
\mathcal{L}_{L_{\alpha_1}^b,L_{\alpha_2}^{b,y}}(H_1, H_2)= \min\limits_{r}l(r,h)=\frac{H_1}{\alpha_1}.
\end{equation}
\end{itemize}
Combining the above cases \eqref{bivariatelegendrespectrem141y},\ \eqref{bivariatelegendrespectrem142y}, \eqref{bivariatelegendrespectrem1432y}, \eqref{bivariatelegendrespectrem143y}, and \eqref{bivariatelegendrespectrem144y}, the Legendre spectrum of the L\'evy function $L_{\alpha_1}^b$ and the translated L\'evy function $L_{\alpha_2}^{b,y}$ when $y$ is a $b$-ary rational number is given by
\begin{equation}
        \nonumber
\mathcal{L}_{L_{\alpha_1}^b,L_{\alpha_2}^{b,y}}(H_1, H_2)= \left\{
             \begin{array}{ll}
           \frac{H_1}{\alpha_1}=\frac{H_2}{\alpha_2},&\quad {\rm for} \quad \big(H_1, \frac{\alpha_2}{\alpha_1} H_1\big) \quad {\rm with} \quad \ H_1\in [0,  \alpha_1 ],\vspace{1ex}\\
            -\infty,&\quad \text{otherwise}.
             \end{array}
\right.
    \end{equation}

\bigskip

Next, we determine the determine the bivariate multifractal Legendre spectrum  of $L_{\alpha_1}^b$  and $L_{\alpha_2}^{b,y}$ when $y$ is not a non-$b$-adic rational number. The scaling function is given by Lemma \ref{lemma_structurefunc_rational}. Substituting into $l(r,h)$, for $\{w_k\}$ as a bounded sequence, it follows that
\begin{equation} \nonumber
    \begin{aligned}
      l(r,H)=
      \left\{
             \begin{array}{ll}
            1+r_1(H_1-\alpha_1)+r_2(H_2-\alpha_2),&\quad \text{if}\ 1-\alpha_1 r_1 \geqslant 0\ \text{and}\  1-\alpha_2 r_2 \geqslant 0,\vspace{1ex}\\
            r_1(H_1-\alpha_1)+r_2H_2,&\quad \text{if}\ 1-\alpha_2 r_2 < 0\ \text{and}\  \alpha_1 r_1\leqslant  \alpha_2 r_2
            ,\vspace{1ex}\\
            r_1H_1+r_2(H_2-\alpha_2),&\quad \text{if}\ 1-\alpha_1 r_1 < 0\ \text{and}\  \alpha_1 r_1>  \alpha_2 r_2.
             \end{array}
\right.
    \end{aligned}
\end{equation} 
The Legendre spectrum $\mathcal{L}_{L_{\alpha_1}^b,L_{\alpha_2}^{b,y}}$ which is $\min\limits_{r}l(r,H)$ is derived as follows.
\begin{itemize}
\item If $H_1 > \alpha_1$,\  or $H_2>\alpha_2$, then the linear term $H\cdot r$ dominates as $r_1\to-\infty,$ and $r_2\to-\infty$, leading to $l(r,H)\to-\infty$. Thus
\begin{equation} \label{fbivariatelegendrespectrem2}
\mathcal{L}_{L_{\alpha_1}^b,L_{\alpha_2}^{b,y}}(H_1, H_2)= \min\limits_{r}l(r,h)=-\infty.
\end{equation}

\item If $H_1 <0$, or $H_2<0$, then $H\cdot r\to -\infty$ as $r_1, r_2 \to +\infty$, while other terms remain bounded. Hence, 
\begin{equation} \label{fbivariatelegendrespectrem3}
\mathcal{L}_{L_{\alpha_1}^b,L_{\alpha_2}^{b,y}}(H_1, H_2)= \min\limits_{r}l(r,h)=-\infty.
\end{equation}

\item If $(H_1,H_2) \in [0,\alpha_1]\times [0,\alpha_2]$, and $\frac{H_1}{\alpha_1}+\frac{H_2}{\alpha_2}-1<0$, then consider the case where $H_1 < -\frac{\alpha_1}{\alpha_2}(H_2-\alpha_2)$. That is,  $1-\alpha_1 r_1 < 0$, $1-\alpha_2 r_2 < 0$, and $\alpha_1 r_1\geqslant  \alpha_2 r_2$. On the constraint $\alpha_1 r_1= \alpha_2 r_2$, the function $l(r,h)$ becomes 
\begin{equation}
    \nonumber
    l(r,h)=r_2\left(\frac{\alpha_2}{\alpha_1}H_1+H_2-\alpha_2\right).
\end{equation}
If the coefficient is negative, i.e., $\frac{H_1}{\alpha_1}+\frac{H_2}{\alpha_2}-1<0$, then letting $r_2\to +\infty$ gives 
\begin{equation} \label{fbivariatelegendrespectrem4}
\mathcal{L}_{L_{\alpha_1}^b,L_{\alpha_2}^{b,y}}(H_1, H_2)= \min\limits_{r}l(r,h)=-\infty.
\end{equation}

% $l(r,H)=r_1H_1+r_2(H_2-\alpha_2)$ is considered for $1-\alpha_1 r_1 < 0$, $1-\alpha_2 r_2 < 0$, and $\alpha_1 r_1\geqslant  \alpha_2 r_2$. That is, the case when $H_1 < -\frac{\alpha_1}{\alpha_2}(H_2-\alpha_2)$ is studied. The minimum value is reached on the line $\alpha_1 r_1= \alpha_2 r_2$. Subsequently,
% \begin{equation}
%     \nonumber
% \min\limits_{r}l(r,h)=r_1H_1+r_2(H_2-\alpha_2)=\frac{\alpha_2}{\alpha_1}r_2H_1+r_2(H_2-\alpha_2)=r_2\left[\frac{\alpha_2}{\alpha_1}H_1+(H_2-\alpha_2)\right].
% \end{equation}
% Briefly, if $H_1 < -\frac{\alpha_1}{\alpha_2}(H_2-\alpha_2)$, then for $r_2=\infty$, it follows that
% \begin{equation} \label{fbivariatelegendrespectrem4}
% \mathcal{L}_{L_{\alpha_1}^b,L_{\alpha_2}^{b,y}}(H_1, H_2)= \min\limits_{r}l(r,h)=-\infty.
% \end{equation}

\item If $(H_1, H_2) \in [0,\alpha_1]\times [0,\alpha_2]$, and $\frac{H_1}{\alpha_1}+\frac{H_2}{\alpha_2}-1\geqslant0$, then the analysis of the Legendre spectrum is divided into five subcases to examine the minimum value of $l$, as shown below. The Legendre spectrum is the minimum of the five cases.
\begin{figure}[H]
\centering
\begin{tikzpicture}[scale=0.5]
 \fill[red!40] (-3,-3) rectangle (4,3);
 % \fill[red!40] (1,0) rectangle (4,-3);
 % \fill[red!30] (1,0) rectangle (-3,3);
 % \fill[red!20] (1,0) rectangle (4,3);
\fill[green!40] (4,-3) rectangle (8,3);
\fill[green!30] (4,3) -- (8,3)--(8,6);
\fill[blue!40] (1,3) rectangle (-3,6);
\fill[blue!40] (1,3) rectangle (4,6);
\fill[blue!30] (4,6) -- (4,3)--(8,6);
\draw[thick,dashed] (-3,3) -- (8,3);
\draw[thick,dashed] (4,-3) -- (4,6);
\draw[thick,dashed] (4,3) -- (8,6);
\node [red] at(0.9,0.2){case $4.1$};  
\node [red] at(6,0.2){case $4.2$}; 
\node [red] at(0.9,4.5){case $4.3$};  
\node [red] at(5.5,5){case $4.4$};
\node [red] at(6.5,4){case $4.5$};
\draw [-stealth](-3,0) -- (8,0)node[below,xshift=0.8em,yshift=0.5em]{$r_1$};
\draw [-stealth](1,-3) -- (1,6)node[below,xshift=0em,yshift=1em]{$r_2$};
\node [black] at(3.8,-0.4){$\frac{1}{\alpha_1}$};
\node [black] at(0.6,2.8){$\frac{1}{\alpha_2}$};
\end{tikzpicture}
\caption{}
\end{figure}
\begin{itemize}
    \item If $1-\alpha_1r_1\geqslant 0$, and $1-\alpha_2r_2\geqslant 0$, then based on the assumption of $r_1,r_2,H_1$ and $H_2$, the function $l(r,h)$ is decreasing as $r_1$  and $r_2$ increase. Hence, taking the maximum admissible values $r_1=\frac{1}{\alpha_1},$ and $r_2=\frac{1}{\alpha_2},$ yields $l(r,h)=\frac{H_1}{\alpha_1}+\frac{H_2}{\alpha_2}-1$.

    \item If $1-\alpha_1r_1< 0$, and $1-\alpha_2r_2\geqslant 0$, then $l(r,h)$ is decreasing as $r_1$ decreases and $r_2$ increases. Thereafter, minimizing with respect to $r_1=\frac{1}{\alpha_1},$ and $r_2=\frac{1}{\alpha_2},$ it follows that $l(r,h)=\frac{H_1}{\alpha_1}+\frac{H_2}{\alpha_2}-1$.

    \item If $1-\alpha_1r_1\geqslant 0$, and $1-\alpha_2r_2< 0$, then $l$ is decreasing as $r_1$ increases and $r_2$ decreases. Choosing the same value $r_1=\frac{1}{\alpha_1},$ and $r_2=\frac{1}{\alpha_2},$ again leads to
    $l(r,h)=\frac{H_1}{\alpha_1}+\frac{H_2}{\alpha_2}-1$.

    \item If $1-\alpha_1r_1< 0$, $1-\alpha_2r_2< 0$, and $\alpha_1 r_1\leqslant \alpha_2 r_2$, that is, if $ r_1\leqslant \frac{\alpha_2}{\alpha_1}r_2$, and $\frac{\alpha_1}{\alpha_2}\cdot H_2\geqslant -(H_1-\alpha_1)$, then the minimum value is attained on the line $\alpha_1r_1=\alpha_2r_2$. As a consequence,
\begin{equation}
    \nonumber
   l(r,h)=r_1(H_1-\alpha_1)+r_2H_2= r_1(H_1-\alpha_1)+\frac{\alpha_1}{\alpha_2}r_1H_2=r_1\left(H_1-\alpha_1+\frac{\alpha_1}{\alpha_2}H_2\right).
\end{equation}
Combining the assumption that $\frac{H_1}{\alpha_1}+\frac{H_2}{\alpha_2}-1\geqslant0$, the value of $l(r,h)$ attains  minimum by letting $r_1=\frac{1}{\alpha_1}$,
\begin{equation}
    \nonumber
   l(r,h)= \frac{H_1}{\alpha_1}+\frac{H_2}{\alpha_2}-1.
\end{equation}

    \item If $1-\alpha_1r_1< 0$, $1-\alpha_2r_2< 0$, and $\alpha_1 r_1 >  \alpha_2 r_2$, that is, if $ r_1\geqslant \frac{\alpha_2}{\alpha_1}r_2$, and $H_1 \geqslant -\frac{\alpha_1}{\alpha_2}(H_2-\alpha_2)$, then  the minimum value is reached on the line $\alpha_1r_1=\alpha_2r_2$. As a result,
\begin{equation}
    \nonumber
   l(r,h)=\frac{\alpha_2}{\alpha_1}r_2H_1+r_2(H_2-\alpha_2)=r_2\left[\frac{\alpha_2}{\alpha_1}H_1+(H_2-\alpha_2)\right].
\end{equation}
By $\frac{H_1}{\alpha_1}+\frac{H_2}{\alpha_2}-1\geqslant0$,  the minimum value is attained when $r_2=\frac{1}{\alpha_2}$,
\begin{equation}
    \nonumber
   l(r,h)= \frac{H_1}{\alpha_1}+\frac{H_2}{\alpha_2}-1.
\end{equation}
\end{itemize}
Across all five subcases, if $(H_1,H_2) \in [0,\alpha_1]\times [0,\alpha_2]$, and $\frac{H_1}{\alpha_1}+\frac{H_2}{\alpha_2}-1\geqslant0$, then the Legendre spectrum is
\begin{equation}\label{fbivariatelegendrespectrem1}
\mathcal{L}_{L_{\alpha_1}^b,L_{\alpha_2}^{b,y}}(H_1, H_2)= \min\limits_{r}l(r,h)= \frac{H_1}{\alpha_1}+\frac{H_2}{\alpha_2}-1.
\end{equation}
\end{itemize}
Thus, based on \eqref{fbivariatelegendrespectrem2},\ \eqref{fbivariatelegendrespectrem3}, \eqref{fbivariatelegendrespectrem4}, and \eqref{fbivariatelegendrespectrem1}, when $\{w_k\}$ is a bounded sequence, the multivariate multifractal Legendre spectrum is
\begin{equation} \nonumber
\mathcal{L}_{L_{\alpha_1}^b,L_{\alpha_2}^{b,y}}(H_1, H_2)= \left\{
             \begin{array}{ll}
           \frac{H_1}{\alpha_1}+\frac{H_2}{\alpha_2}-1,&\quad \text{if}\ H\ \in\ [0,\alpha_1]\times [0,\alpha_2],\ \text{and}\ \frac{H_1}{\alpha_1}+\frac{H_2}{\alpha_2}-1\geqslant 0,\vspace{1ex}\\
            -\infty,&\quad \text{else}.
             \end{array}
\right.
\end{equation}

In the following, we analyze the bivariate multifractal Legendre spectrum of $L_{\alpha_1}^b$ and $L_{\alpha_2}^{b,y}$ under the condition that the sequence  $\{w_k\}$ is unbounded. In this case, the function $l(r,h)$ takes the following piecewise form depending on the region of $r$,
\begin{small}
    \begin{equation} \nonumber
    \begin{aligned}
      l(r,H)=
      \left\{
             \begin{array}{ll}
            1+r_1(H_1-\alpha_1)+r_2(H_2-\alpha_2),&\quad \text{if}\ 1-\alpha_1 r_1 \geqslant 0,\ 1-\alpha_2 r_2 \geqslant 0\ \text{and}\  1-\alpha_1 r_1-\alpha_2 r_2 \geqslant 0,\vspace{1ex}\\
            r_1(H_1-\alpha_1)+r_2H_2,&\quad \text{if}\ 1-\alpha_2 r_2 < 0\ \text{and}\  r_1< 0,\vspace{1ex}\\
            r_1H_1+r_2(H_2-\alpha_2),&\quad \text{if}\ 1-\alpha_1 r_1 < 0\ \text{and}\ r_2< 0,\vspace{1ex}
            \\r_1H_1+r_2H_2,&\quad \text{if}\ 1-\alpha_1 r_1 -\alpha_2 r_2 < 0, \ r_1\geqslant 0\ \text{and}\ r_2 \geqslant 0.
             \end{array}
\right.
    \end{aligned}
\end{equation} 
\end{small}

Then the Legendre spectrum is analyzed as following.
\begin{itemize}
    \item If $H_1 > \alpha_1$,\  or $H_2>\alpha_2$, then the term $H\cdot r$ dominates and diverges to $-\infty$ as  $r_1,r_2\to-\infty$. Thereafter,
\begin{equation} \label{bivariatelegendrespectrem2fin}
\mathcal{L}_{L_{\alpha_1}^b,L_{\alpha_2}^{b,y}}(H_1, H_2)= \min\limits_{r}l(r,h)=-\infty.
\end{equation}
    \item 
If $H_1 <0$, or $H_2<0$, then  $l(r,H)\to-\infty$ as $r_1, r_2\to\infty$. Consequently,
\begin{equation} \label{bivariatelegendrespectrem3fin}
\mathcal{L}_{L_{\alpha_1}^b,L_{\alpha_2}^{b,y}}(H_1, H_2)= \min\limits_{r}l(r,h)=-\infty.
\end{equation}
    \item If $(H_1,H_2) \in [0,\alpha_1]\times [0,\alpha_2]$, then we analyse the monotonicity of the function $l(r,h)$ in four parts.   When $1-\alpha_1 r_1 \geqslant 0,\ 1-\alpha_2 r_2 \geqslant 0\ \text{and}\  1-\alpha_1 r_1-\alpha_2 r_2 \geqslant 0$, the function $l(r,h)$ is decreasing as $r_1$ and $r_2$ increase.  When $1-\alpha_1 r_1 < 0\ \text{and}\ r_2 < 0$, the function $l(r,h)$ is decreasing as $r_1$ decreases and $r_2$ increases. When $1-\alpha_2 r_2 < 0\ \text{and}\ r_1 < 0$, the function $l(r,h)$ is decreasing as $r_1$ increases and $r_2$ decreases. When $r_1 \geqslant 0,\ r_2 \geqslant 0\ \text{and}\  1-\alpha_1 r_1-\alpha_2 r_2 < 0$, the function $l(r,h)$ is decreasing as $r_1$ and $r_2$ decrease. It follows that $l(r,h)$ is a convex function and reaches  its minimum at $1-\alpha_1 r_1-\alpha_2 r_2 = 0$, and $0\leqslant r_1\leqslant \frac{1}{\alpha_1}$. Hence, 
\begin{equation}
    \begin{aligned}
        \nonumber
l(r,h)=r_1H_1+\frac{1-\alpha_1r_1}{\alpha_2}H_2=r_1\left(H_1-\frac{\alpha_1}{\alpha_2}H_2\right)+\frac{H_2}{\alpha_2}.
    \end{aligned}
\end{equation}
When $H_1\geqslant \frac{\alpha_1}{\alpha_2}H_2$,  taking $r_1=0$ minimizes the expression, yielding
\begin{equation}
    \nonumber
    l(r,h)=\frac{H_2}{\alpha_2}.
\end{equation}
When $H_1< \frac{\alpha_1}{\alpha_2}H_2$, choosing $r_1=\frac{1}{\alpha_1}$ gives
\begin{equation}
    \nonumber
    l(r,h)=\frac{H_1}{\alpha_1}.
\end{equation}

Thus,
\begin{equation} \label{bivariatelegendrespectrem4fin}
\mathcal{L}_{L_{\alpha_1}^b,L_{\alpha_2}^{b,y}}(H_1, H_2)= \min\limits_{r}l(r,h)=\min\left\{\frac{H_1}{\alpha_1},\frac{H_2}{\alpha_2} \right\}.
\end{equation}
\end{itemize}
To summarize, when $\{w_k\}$ is an unbounded sequence, then the multivariate multifractal Legendre spectrum is given by
\begin{equation}
        \nonumber
\mathcal{L}_{L_{\alpha_1}^b,L_{\alpha_2}^{b,y}}(H_1, H_2)= \begin{cases}
    \min\left\{\frac{H_1}{\alpha_1}, \frac{H_2}{\alpha_2}\right\},       & \quad {\rm if}\ (H_1,H_2) \in \left[0,\alpha_1\right]\times\left[0,{\alpha_2}\right],\\
-\infty       &\quad {\rm else}.
  \end{cases}
    \end{equation}
% { \color{blue} define precisely what is meant by $\sim$.

% Is it the same meaning through all the paper?  }

% { \color{blue} What do you mean here? do you consider only a special case?}

% We get the same result by taking the same analysis as follows when $\varepsilon_{i}\neq b-1,\ \forall i= u_{k}-1,\ \text{and}\ u_{k+1}\leqslant i< u_{k+2};\ \varepsilon_{i}=b-1,\ \forall u_{k}\leqslant i< u_{k+1}$.
\end{proof}
% In other words, we have $u_{k+1}=o(u_{k}),$ where $o(\cdot)$ is the infinitesimal of higher order. 
% Then there is a sequence $\{\epsilon_{k}\}$ that converges to $0$ such that $u_{k} = \{\epsilon_{k}\}v_{k}$.

\subsection{Proof of Theorem \ref{multivariatemultifractalLegendrespectrum_nfunctionsrational}} \label{section5.2}
In this subsection, we prove the multivariate multifractal spectrum for $n$ L\'evy functions $L_{\alpha_i}^{b,-y_i}$ ($1\leqslant i\leqslant n$), under the setting where $y_1=0$, and for all $(2\leqslant i \leqslant n)$, the shift $y_i$ belongs to the class $\mathcal{S}_3,$, as defined in \eqref{s123}. 
\begin{proof}
Let $j$ denote the scale of the $b$-adic interval $\lambda$. By Lemmas \ref{bi_oscillation_rational} and \ref{lemma_structurefunc_rational}, as in the proof of Theorem \ref{multitheofinal}, the oscillations $d_{\lambda}^{(i)}(1\leqslant i\leqslant n)$ can be approximated as $d_{\lambda}^{(i)}\sim\Delta\left(\frac{k_2}{b^{l}}\right),\ \forall 1\leqslant l\leqslant j.$ The structure function $S(r,j)$, as defined in \eqref{sructurefunction}, becomes
 \begin{equation}
    \begin{aligned}
        \nonumber
        S(r,j)=b^{-j}\sum\limits_{\lambda\in\Lambda_j}\prod\limits_{i=1}^n\left(d_{\lambda}^{(i)}\right)^{r_i}
        \sim b^{-j}\left(\sum\limits_{l=0}^j b^l b^{-l\sum\limits_{i=1}^n\alpha_ir_i}\right).
    \end{aligned} 
\end{equation}
We now analyze the scaling function $\zeta(r)$, as defined by the logarithmic slope in \eqref{scalingfunction}. 
\begin{itemize}
    \item If $1-\sum\limits_{i=1}^n\alpha_ir_i \geqslant 0,$ then the sum behaves as a geometric progression dominated by the largest term, yielding
    \begin{equation}
        \nonumber
        \sum\limits_{l=0}^j b^l b^{-l\sum\limits_{i=1}^n\alpha_ir_i}\sim b^{j(1- \sum\limits_{i=1}^n\alpha_ir_i)}.
    \end{equation}
The derivation of the scaling function follows : 
\begin{equation}
\zeta(r)=\liminf\limits_{j\to+\infty}\frac{\log(S(r,j))}{\log(b^{-j})}=\liminf\limits_{j\to+\infty}\frac{\log\left(b^{-j+j(1- \sum\limits_{i=1}^n\alpha_ir_i)}  \right)}{\log(b^{-j})}
     %\\&=\liminf\limits_{j\to+\infty}\frac{\log\left(b^{-j\sum\limits_{i=1}^n\alpha_ir_i}\right)}{\log(b^{-j})}
      =\sum\limits_{i=1}^n\alpha_ir_i.
\end{equation}
    \item If $1-\sum\limits_{i=1}^n\alpha_ir_i< 0$, then the exponent in the summation becomes negative, and the series converges to a constant as $j\to \infty$. That is
    \begin{equation}
        \nonumber
        \sum\limits_{l=0}^{j}b^{l(1- \sum\limits_{i=1}^n\alpha_ir_i)}\sim 1,
    \end{equation}
and
\begin{equation}\nonumber
\begin{aligned}
\zeta(r)=\liminf\limits_{j\to+\infty}\frac{\log(S(r,j))}{\log(b^{-j})}
=\liminf\limits_{j\to+\infty}\frac{\log\left(b^{-j}  \right)}{\log(b^{-j})}=1.
\end{aligned}
\end{equation}
\end{itemize}
It follows that   the scaling function is given by
\begin{equation} \nonumber
    \begin{aligned}
      \zeta(r)=
      \left\{
             \begin{array}{ll}
            \sum\limits_{i=1}^n\alpha_ir_i,&\quad \text{if}\ 1-\sum\limits_{i=1}^n\alpha_ir_i\geqslant 0,\\
            1,&\quad \text{if}\ 1-\sum\limits_{i=1}^n\alpha_lr_l < 0.
             \end{array}
\right.
    \end{aligned}
\end{equation}
Considering the function $l(r,H)=1-\zeta(r)+H\cdot r$ as defined in \eqref{l(r,h)}, it holds that
\begin{equation}
    \begin{aligned}
        \nonumber
        l(r,H)=\left\{
             \begin{array}{ll}
            1+\sum\limits_{i=1}^nr_i(H_i-\alpha_i),&\quad \text{if}\ 1-\sum\limits_{i=1}^n\alpha_ir_i \geqslant 0,\\
            \sum\limits_{i=1}^n r_iH_i,&\quad \text{if}\ 1-\sum\limits_{i=1}^n\alpha_ir_i < 0.
             \end{array}
\right.
    \end{aligned}
\end{equation}
Using the same reasoning as in the bivariate case (\eqref{bivariatelegendrespectrem141y}, \eqref{bivariatelegendrespectrem142y}, \eqref{bivariatelegendrespectrem1432y}, \eqref{bivariatelegendrespectrem143y}, and \eqref{bivariatelegendrespectrem144y}), the following conclusion holds:
\begin{itemize}
    \item if $(H_1,H_2,\cdots,H_n)\notin [0,\alpha_1]\times[0,\alpha_2]\times\cdots\times[0,\alpha_n]$, then $\min\limits_{r}l(r,h)=-\infty;$
    \item if $(H_1,H_2,\cdots,H_n)\in [0,\alpha_1]\times[0,\alpha_2]\times\cdots\times[0,\alpha_n]$, then the minimum is attained on the hyperplane $1-\sum\limits_{i=1}^n\alpha_ir_i=0$; that is, 
    \begin{equation}
        \begin{aligned}
            \nonumber
           l(r,h)=\sum\limits_{i=1}^n r_iH_i
            =\frac{1-\sum\limits_{i=2}^n r_i\alpha_i}{\alpha_1}H_1+\sum\limits_{i=2}^n r_iH_i
            =\frac{H_1}{\alpha_1}+\sum\limits_{i=2}^n r_i\left(H_i-\frac{\alpha_i}{\alpha_1}H_1\right).
        \end{aligned}
    \end{equation}
    Hence, if there exists any $i(2\leqslant i \leqslant n)$ such that $H_i\neq \frac{\alpha_i}{\alpha_1}H_1$,  then the second term in the expression diverges to $-\infty$, implying $\min\limits_{r}l(r,h)$. Otherwise, if $H_i\neq \frac{\alpha_i}{\alpha_1}H_1$ for all $2\leqslant i \leqslant n$, then the minimum of $l(r,h)$ is $\frac{H_1}{\alpha_1}$.
\end{itemize}
\end{proof}

\subsection{Proof of Theorem \ref{multivariatemultifractalLegendrespectrum_nfunctions}} \label{section5.3}
In this subsection, we determine the multivariate multifractal Legendre spectrum of $n$ functions $L_{\alpha_i}^{b,-y_i}$ ($1\leqslant i\leqslant n$) where $y_1=0, y_i\in \mathcal{S}_3$ for $2\leqslant i\leqslant m,$ and $y_i\in \mathcal{S}_1$ for $m+1\leqslant i \leqslant n$. Here, the set $\mathcal{S}_1$ and $\mathcal{S}_3$ are defined in \eqref{s123}.

\begin{proof}
By Lemmas \ref{bi_oscillation_irrational} and \ref{scalingfunctionirrational}, following the approach of Theorem \ref{multitheofinal}, the oscillations $d_{\lambda}^{(i)}(1\leqslant i\leqslant n)$ can be approximated as follows. If $l<j$, then
\begin{equation} \nonumber
  \begin{aligned}
      &d_{\lambda}^{(1)}\sim\Delta\left(\frac{k_1}{b^{l}}\right), 
    \\& \forall 2\leqslant i\leqslant m,\ d_{\lambda}^{(i)}\sim\Delta\left(\frac{k_2}{b^{l}}\right),
    \\&\forall m+1\leqslant i\leqslant n,\  d_{\lambda}^{(i)}\sim\Delta\left(\frac{k_2}{b^{j}}\right);
  \end{aligned}
\end{equation}
% { \color{blue} here and elsewhere, put quantifiers at the beginning of the equation  }
if $l=j$, then
\begin{equation} \nonumber
  \begin{aligned}
      &d_{\lambda}^{(1)}\sim\Delta\left(\frac{k_1}{b^{j}}\right), 
     \\& \forall 2\leqslant i\leqslant m,\ d_{\lambda}^{(i)}\sim\Delta\left(\frac{k_2}{b^{j}}\right),
    \\& \forall m+1\leqslant i\leqslant n,\ \forall 1\leqslant l'\leqslant j\ d_{\lambda}^{(i)}\sim\Delta\left(\frac{k_2}{b^{l'}}\right).
  \end{aligned}
\end{equation}
The multivariate multifractal structure function $S(r,j)$, as introduced in \eqref{sructurefunction}, is given as follows. Its asymptotic behavior is governed by contributions from terms associated with different ranges of $i$, leading to
\begin{equation}
    \begin{aligned}
        \nonumber
        S(r,j)=b^{-j}\sum\limits_{\lambda\in\Lambda_j}\prod\limits_{i=1}^n\left(d_{\lambda}^{(i)}\right)^{r_i}
        \sim b^{-j}\left(\sum\limits_{l=0}^j b^l b^{-l\sum\limits_{i=1}^m\alpha_ir_i} b^{-j\sum\limits_{i=m+1}^n\alpha_ir_i} + \sum\limits_{l=0}^j b^l b^{-l\sum\limits_{i=m+1}^n\alpha_ir_i} b^{-j\sum\limits_{i=1}^m\alpha_ir_i} \right)
    \end{aligned} 
\end{equation}

Next, the scaling function $\zeta(r)$, defined by \eqref{scalingfunction}, is analyzed as following. 
\begin{itemize}
    \item If $1-\sum\limits_{i=1}^m\alpha_ir_i \geqslant 0,$ and $1-\sum\limits_{i=m+1}^n\alpha_ir_i \geqslant 0,$ then
    \begin{equation}
        \nonumber
        \sum\limits_{l=0}^j b^l b^{-l\sum\limits_{i=1}^m\alpha_ir_i-j\sum\limits_{i=m+1}^n\alpha_ir_i}+\sum\limits_{l=0}^j b^l b^{-l\sum\limits_{i=m+1}^n\alpha_ir_i-j\sum\limits_{i=1}^m\alpha_ir_i}\sim b^{j(1- \sum\limits_{i=1}^n\alpha_ir_i)}.
    \end{equation}
The summation in $S(r,j)$ is dominated by its largest term. Hence, the scaling function becomes
\begin{equation}\nonumber
\begin{aligned}
\zeta(r)=\liminf\limits_{j\to+\infty}\frac{\log(S(r,j))}{\log(b^{-j})}=\liminf\limits_{j\to+\infty}\frac{\log\left(b^{-j+j(1- \sum\limits_{i=1}^n\alpha_ir_i)}  \right)}{\log(b^{-j})}
    =\sum\limits_{i=1}^n\alpha_ir_i.
\end{aligned}
\end{equation}
    \item If $1-\sum\limits_{i=1}^m\alpha_ir_i \geqslant 0,$ and $1-\sum\limits_{i=m+1}^n\alpha_ir_i < 0,$ then
    \begin{equation}
        \nonumber
        \sum\limits_{l=0}^j b^lb^{-l\sum\limits_{i=1}^m\alpha_ir_i-j\sum\limits_{i=m+1}^n\alpha_ir_i}+\sum\limits_{l=0}^j b^l b^{-l\sum\limits_{i=m+1}^n\alpha_ir_i-j\sum\limits_{i=1}^m\alpha_ir_i}\sim b^{j(1- \sum\limits_{i=1}^n\alpha_ir_i)}+  b^{-j \sum\limits_{i=1}^m\alpha_ir_i}. 
    \end{equation}
Since $ 1+\sum\limits_{i=1}^m\alpha_ir_i<\sum\limits_{i=1}^n\alpha_ir_i$, it holds that
\begin{equation}\nonumber
\begin{aligned}
\zeta(r)=\liminf\limits_{j\to+\infty}\frac{\log(S(r,j))}{\log(b^{-j})}=\liminf\limits_{j\to+\infty}\frac{\log\left(b^{-j+j(1- \sum\limits_{i=1}^n\alpha_ir_i)}+ b^{-j-j\sum\limits_{i=1}^m\alpha_ir_i} \right)}{\log(b^{-j})}
 =1+\sum\limits_{i=1}^m\alpha_ir_i.
\end{aligned}
\end{equation}
\item If $1-\sum\limits_{i=1}^m\alpha_ir_i < 0,$ and $1-\sum\limits_{i=m+1}^n\alpha_ir_i \geqslant 0,$ then the structure function is 
    \begin{equation}
        \nonumber
        \sum\limits_{l=0}^j b^l b^{-l\sum\limits_{i=1}^m\alpha_ir_i-j\sum\limits_{i=m+1}^n\alpha_ir_i}+\sum\limits_{l=0}^j b^l b^{-l\sum\limits_{i=m+1}^n\alpha_ir_i-j\sum\limits_{i=1}^m\alpha_ir_i}\sim b^{-j \sum\limits_{i=m+1}^n\alpha_ir_i}+b^{j(1- \sum\limits_{i=1}^n\alpha_ir_i)}. 
    \end{equation}
Because $1+\sum\limits_{i=m+1}^n\alpha_ir_i<\sum\limits_{i=1}^n\alpha_ir_i$, it holds that
\begin{small}
\begin{equation}\nonumber
\begin{aligned}
\zeta(r)=\liminf\limits_{j\to+\infty}\frac{\log(S(r,j))}{\log(b^{-j})}=\liminf\limits_{j\to+\infty}\frac{\log\left( b^{-j-j\sum\limits_{i=m+1}^n\alpha_ir_i}+b^{-j+j(1- \sum\limits_{i=1}^n\alpha_ir_i)} \right)}{\log(b^{-j})}
 =1+\sum\limits_{i=m+1}^n\alpha_ir_i.
\end{aligned}
\end{equation}
\end{small}
\item If $1-\sum\limits_{i=1}^m\alpha_ir_i < 0,$ and $1-\sum\limits_{i=m+1}^n\alpha_ir_i < 0,$ then
    \begin{equation}
        \nonumber
        \sum\limits_{l=0}^j b^l b^{-l\sum\limits_{i=1}^m\alpha_ir_i-j\sum\limits_{i=m+1}^n\alpha_ir_i}+\sum\limits_{l=0}^j b^l b^{-l\sum\limits_{i=m+1}^n\alpha_ir_i-j\sum\limits_{i=1}^m\alpha_ir_i}\sim b^{-j \sum\limits_{i=m+1}^n\alpha_ir_i}+b^{-j \sum\limits_{i=1}^m\alpha_ir_i}. 
    \end{equation}
Consequently, the scaling function is given by
\begin{equation} \nonumber
    \begin{aligned}
      \zeta(r)=
      \left\{
             \begin{array}{ll}
            1+\sum\limits_{i=1}^m\alpha_ir_i,&\quad \text{if}\ \sum\limits_{i=1}^m\alpha_ir_i\leqslant  \sum\limits_{i=m+1}^n\alpha_ir_i,\\
            1+\sum\limits_{i=m+1}^n,&\quad \text{if}\ \sum\limits_{i=1}^m > \sum\limits_{i=m+1}^n.
             \end{array}
\right.
    \end{aligned}
\end{equation}
\end{itemize}
Thus, the scaling function is
\begin{small}
\begin{equation}
    \begin{aligned}
        \nonumber
         \zeta(r)=\liminf\limits_{j\to+\infty}\frac{\log(S(r,j))}{\log(b^{-j})}=
      \left\{
             \begin{array}{ll}
            \sum\limits_{i=1}^n\alpha_ir_i,&\quad \text{if}\ 1-\sum\limits_{i=1}^m\alpha_ir_i \geqslant 0\ \text{and}\  1-\sum\limits_{i=m+1}^n\alpha_ir_i \geqslant 0,\\
            1+\sum\limits_{i=1}^m\alpha_ir_i,&\quad \text{if}\ 1-\sum\limits_{i=m+1}^n\alpha_ir_i < 0\ \text{and}\  \sum\limits_{i=1}^m\alpha_ir_i\leqslant \sum\limits_{i=m+1}^n\alpha_ir_i,
            \\
            1+\sum\limits_{i=m+1}^n\alpha_ir_i,&\quad \text{if}\ 1-\sum\limits_{i=1}^m\alpha_ir_i< 0\ \text{and}\  \sum\limits_{i=1}^m\alpha_ir_i>  \sum\limits_{i=m+1}^n\alpha_ir_i,
             \end{array}
\right.
    \end{aligned}
\end{equation}
\end{small}
Considering the function $l(r,H)$ as defined in \eqref{l(r,h)}, it follows that
\begin{equation}
    \begin{aligned}
        \nonumber
        l(r,H)=\left\{
             \begin{array}{ll}
            1+\sum\limits_{i=1}^nr_i(H_i-\alpha_i),&\quad \text{if}\ 1-\sum\limits_{i=1}^m\alpha_ir_i \geqslant 0\ \text{and}\  1-\sum\limits_{i=m+1}^n\alpha_ir_i \geqslant 0,\\
            \sum\limits_{i=1}^mr_i(H_i-\alpha_i)+\sum\limits_{i=m+1}^n r_iH_i,&\quad \text{if}\ 1-\sum\limits_{i=m+1}^n\alpha_ir_i < 0\ \text{and}\  \sum\limits_{i=1}^m\alpha_ir_i\leqslant \sum\limits_{i=m+1}^n\alpha_ir_i,
            \\
            \sum\limits_{i=1}^m r_iH_i+\sum\limits_{i=m+1}^n r_i(H_i-\alpha_i),&\quad \text{if}\ 1-\sum\limits_{i=1}^m\alpha_ir_i < 0\ \text{and}\  \sum\limits_{i=1}^m\alpha_ir_i>  \sum\limits_{i=m+1}^n\alpha_ir_i,
             \end{array}
\right.
    \end{aligned}
\end{equation}
Taking the same analysis as \eqref{fbivariatelegendrespectrem2},\ \eqref{fbivariatelegendrespectrem3}, \eqref{fbivariatelegendrespectrem4}, and \eqref{fbivariatelegendrespectrem1}, it follows that
\begin{itemize}
    \item if $(H_1,H_2,\cdots,H_n)\notin [0,\alpha_1]\times[0,\alpha_2]\times\cdots\times[0,\alpha_n]$, then $\min\limits_{r}l(r,h)=-\infty$,
    \item if $(H_1,H_2,\cdots,H_n)\in [0,\alpha_1]\times[0,\alpha_2]\times\cdots\times[0,\alpha_n]$, then the minimum is on $1-\sum\limits_{i=m+1}^n\alpha_ir_i=0$ and $\sum\limits_{i=1}^m\alpha_ir_i= \sum\limits_{i=m+1}^n\alpha_ir_i$.
  
     On one hand, it holds that
    \begin{equation}
        \begin{aligned}
            \nonumber
            \min\limits_r l(r,h)&=\sum\limits_{i=1}^n r_iH_i-1
            \\&=\frac{1-\sum\limits_{i=2}^m r_i\alpha_i}{\alpha_1}H_1+\sum\limits_{i=2}^m r_iH_i+\frac{1-\sum\limits_{i=m+2}^n r_i\alpha_i}{\alpha_1}H_{m+1}+\sum\limits_{i=m+2}^n r_iH_i-1
            \\&=\frac{H_1}{\alpha_1}+\frac{H_{m+1}}{\alpha_{m+1}}-1+\sum\limits_{i=2}^m r_i\left(H_i-\frac{\alpha_i}{\alpha_1}H_1\right)+\sum\limits_{i=m+2}^n r_l\left(H_i-\frac{\alpha_i}{\alpha_{m+1}}H_{m+1}\right).
        \end{aligned}
    \end{equation}
    Hence, if $H_i=\frac{\alpha_i}{\alpha_1}H_1$ for all $2\leqslant i \leqslant m$, and $H_i=\frac{\alpha_i}{\alpha_{m+1}}H_{m+1}$ for all $m+2\leqslant i \leqslant n$, then the minimum of $l(r,h)$ is $\frac{H_1}{\alpha_1}+\frac{H_{m+1}}{\alpha_{m+1}}-1$. Otherwise, the minimum of $l(r,h)$ is $-\infty$.  

  On the other hand, the minimum value is attained on "$\sum\limits_{i=1}^m\alpha_ir_i=  \sum\limits_{i=m+1}^n\alpha_ir_i$", then
  \begin{equation}
        \begin{aligned}
            \nonumber
            \min\limits_r l(r,h)&=\sum\limits_{i=1}^m r_iH_i+\sum\limits_{i=m+1}^n r_i(H_i-\alpha_i)
            \\&=\sum\limits_{i=1}^m r_i\frac{\alpha_i}{\alpha_1}H_1+\sum\limits_{i=m+1}^n r_i\left(\frac{\alpha_i}{\alpha_{m+1}}H_{m+1}-\alpha_i\right)
            \\&=\frac{H_1}{\alpha_1}\sum\limits_{i=1}^m r_i\alpha_i+\left(\frac{H_{m+1}}{\alpha_{m+1}}-1\right)\sum\limits_{i=m+1}^n r_i\alpha_i
            \\&=\frac{H_1}{\alpha_1}\sum\limits_{i=m+1}^n r_i\alpha_i+\left(\frac{H_{m+1}}{\alpha_{m+1}}-1\right)\sum\limits_{i=m+1}^n r_i\alpha_i
            \\&=\sum\limits_{i=m+1}^n r_i\alpha_i\left(\frac{H_1}{\alpha_{1}}+\frac{H_{m+1}}{\alpha_{m+1}}-1\right).
        \end{aligned}
    \end{equation}
    Thereafter, if $\frac{H_1}{\alpha_{1}}+\frac{H_{m+1}}{\alpha_{m+1}}-1<0$, then the minimum of $l(r,h)$ is $-\infty$. If $\frac{H_1}{\alpha_{1}}+\frac{H_{m+1}}{\alpha_{m+1}}-1\geqslant 0$, then the minimum of $l(r,h)$ is $\frac{H_1}{\alpha_{1}}+\frac{H_{m+1}}{\alpha_{m+1}}-1$.
\end{itemize}

\end{proof}

%%%%%%%%%%%%%%%%%%%%%%%%%%%%%%%%%%%%%%%%%%%%%%%%%%%%%%%%%%%%%%%%%%%%%%%%%%%%%%%%%%%%%%%%%%%%%%%%%%
\section{Concluding remarks and numerical illustrations}

\label{sec:concl} 
%%%%%%%%%%%%%%%%%%%%%%%%%%%%%%%%%%%%%%%%%%%%%%%%%%%%%%%%%%%%%%%%%%%%%%%%%%%%%%%%%%%%%%%%%%%%%%%%%%

The bivariate Legendre spectrum of a  L\'evy function $L_{\alpha_1}^b$ and the translated L\'evy function $L_{\alpha_2}^{b,y}$ was determined. For the sequence $\{w_k\}$ associated with $y$ (as defined in \eqref{w_k}), it holds that in the case where  $\{w_k\}$ is a bounded sequence, the bivariate Legendre spectrum fails to yield an upper bounded for the bivariate multifractal spectrum; however, when $\{w_k\}$ is an unbounded sequence, the bivariate Legendre spectrum yields an upper bound for the bivariate multifractal spectrum. In this case, the bivariate multifractal formalism holds. This distinction between bounded and unbounded sequences is essential for understanding the limitations and capabilities of the bivariate Legendre spectrum in capturing the multifractal properties of the L\'evy functions. Moreover, for almost every $y$, the bivariate multifractal Legendre spectra fail to establish an upper bound on the bivariate multifractal spectra.
%even in the very special setting of a function and its translates.

\label{numres}

We already mentioned that, in the case of discontinuous functions,  the Legendre spectrum based on wavelet techniques does not necessarily yield an upper bound for the multifractal spectrum; this motivated our choice of using oscillations instead. However, it is  natural to wonder what  would be the output of these methods.  This motivated the following preliminary  numerical  investigation where we compare Legendre spectra obtained using  oscillations vs.  wavelet leaders (see  \cite{jaffard2021review} and references therein for a description of this method and its numerical implementation). We performed  these simulations of the theoretical outcomes pertaining to the bivariate multifractal Legendre spectra, following \eqref{1/3}, by using the publicly available  Matlab toolbox {\em Wavelet p-Leader and Bootstrap based MultiFractal analysis (PLBMF)}.

Figure \ref{matlab_oscillation} illustrates the numerical results of bivariate multifractal Legendre spectra when $b=2$, and $y=1/2, 1/3, 1/4, 1/5$, based on  oscillations vs. wavelets. One can observe that the Legendre spectra derived from wavelets and oscillations exhibit remarkable similarity. It is tempting to conclude from this  preliminary study that the  range of validity of the wavelet approach probably is larger than one could a priori expect; therefore  an important open problem is to determine if wavelet techniques may  yield an upper bound for the multifractal spectrum  for a subclass of discontinuous functions that would include natural examples such as  L\'evy processes, L\'evy functions, or even  the larger framework supplied by normally convergent  Davenport series, see \cite{jaffard2004davenport} for a description of the multifractal properties of these series. 
\begin{figure}
    \centering
    \includegraphics[width=2in,height=1.4in]{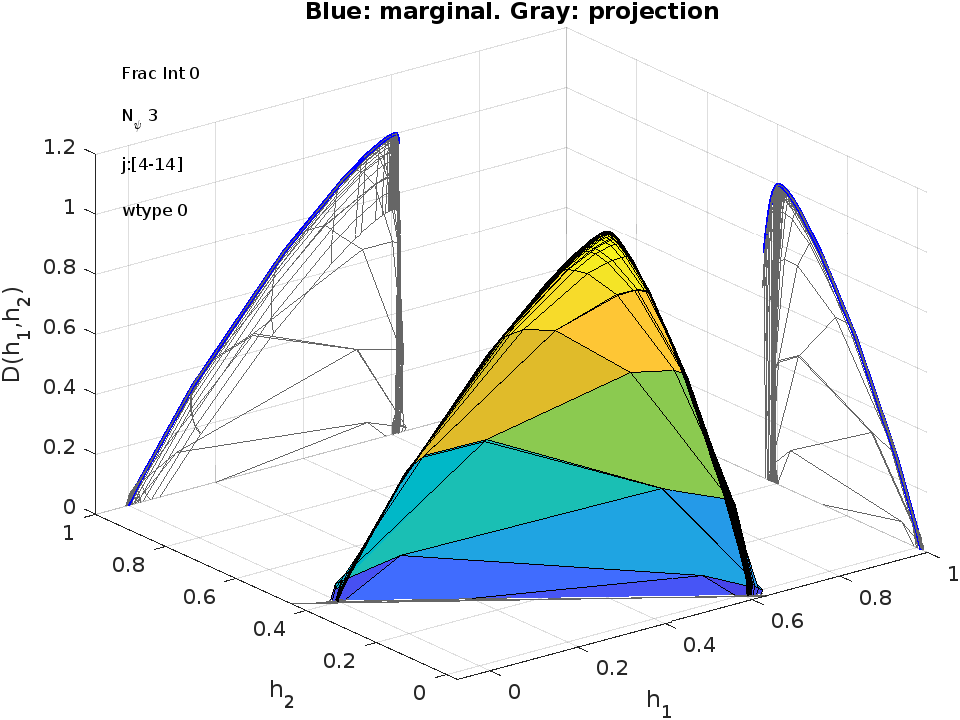}
    \includegraphics[width=2in,height=1.4in]{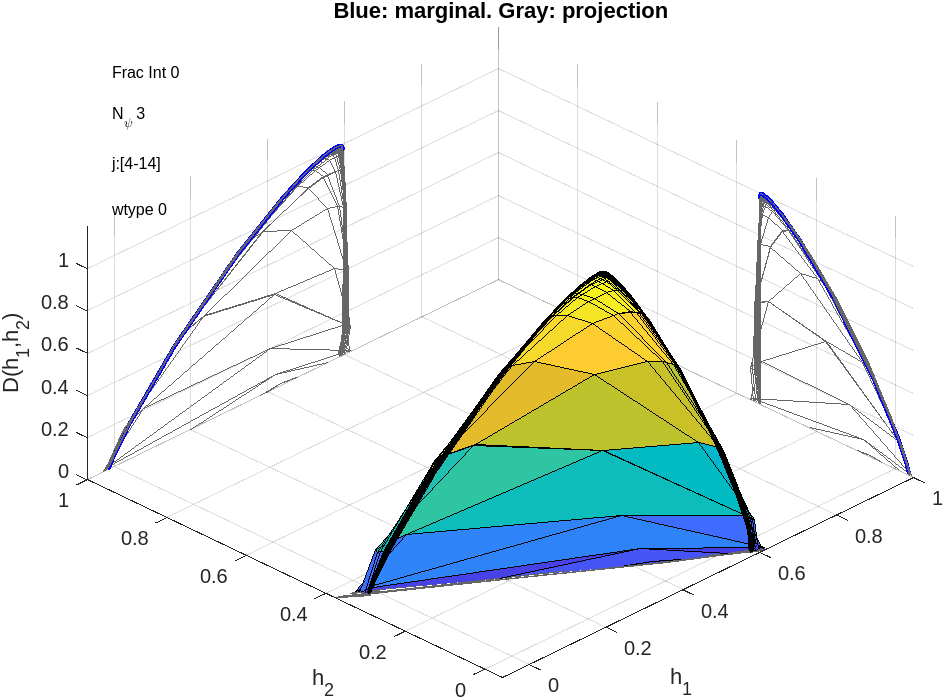}
    \includegraphics[width=2in,height=1.4in]{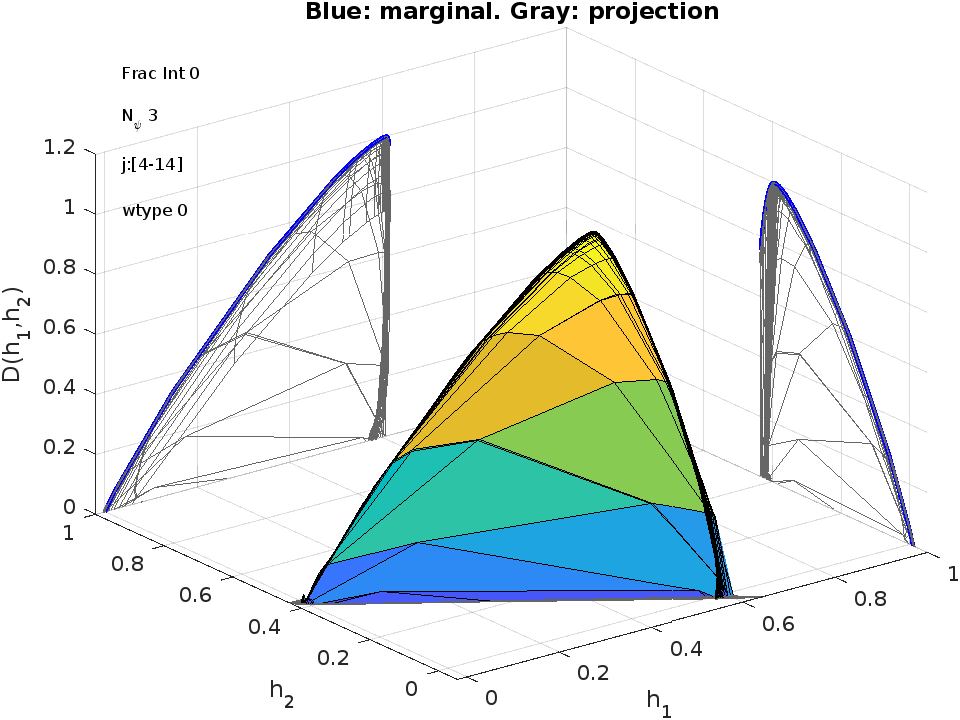}
    \includegraphics[width=2in,height=1.4in]{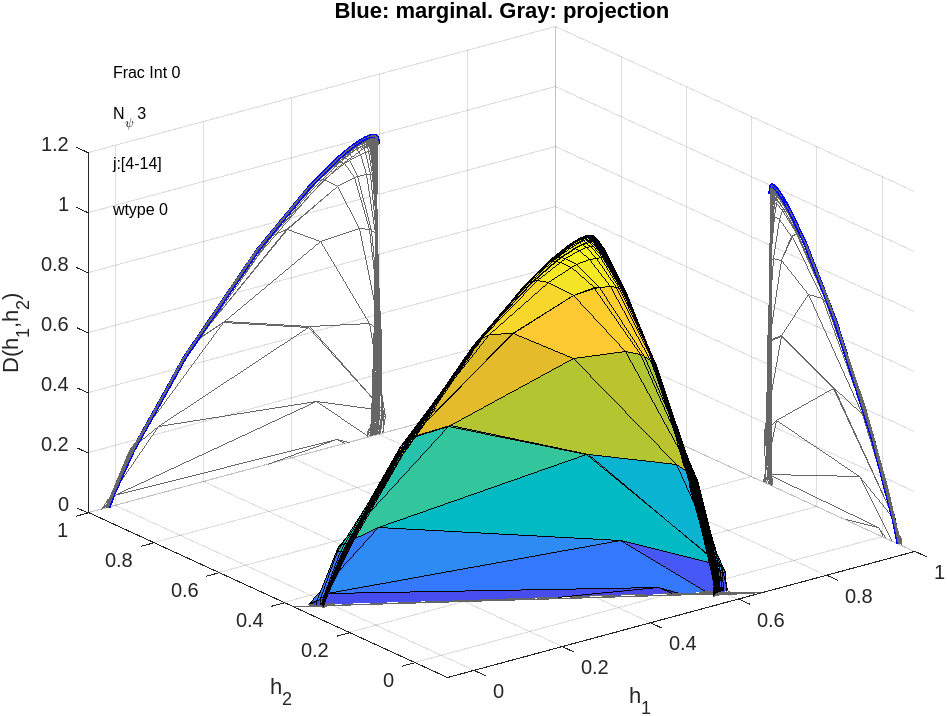}
    \includegraphics[width=2in,height=1.4in]{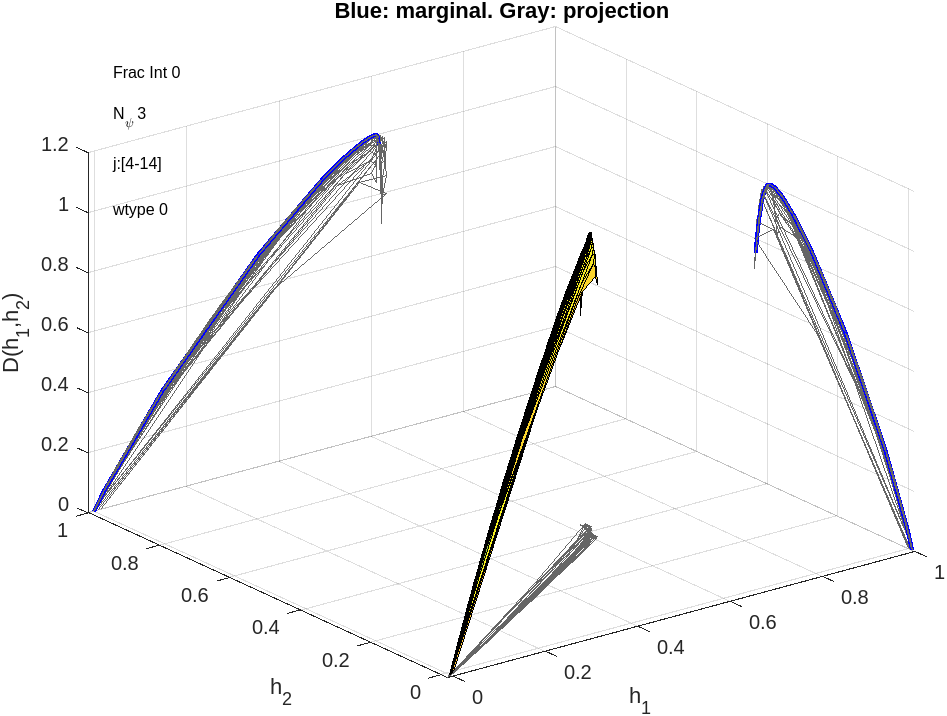}
    \includegraphics[width=2in,height=1.4in]{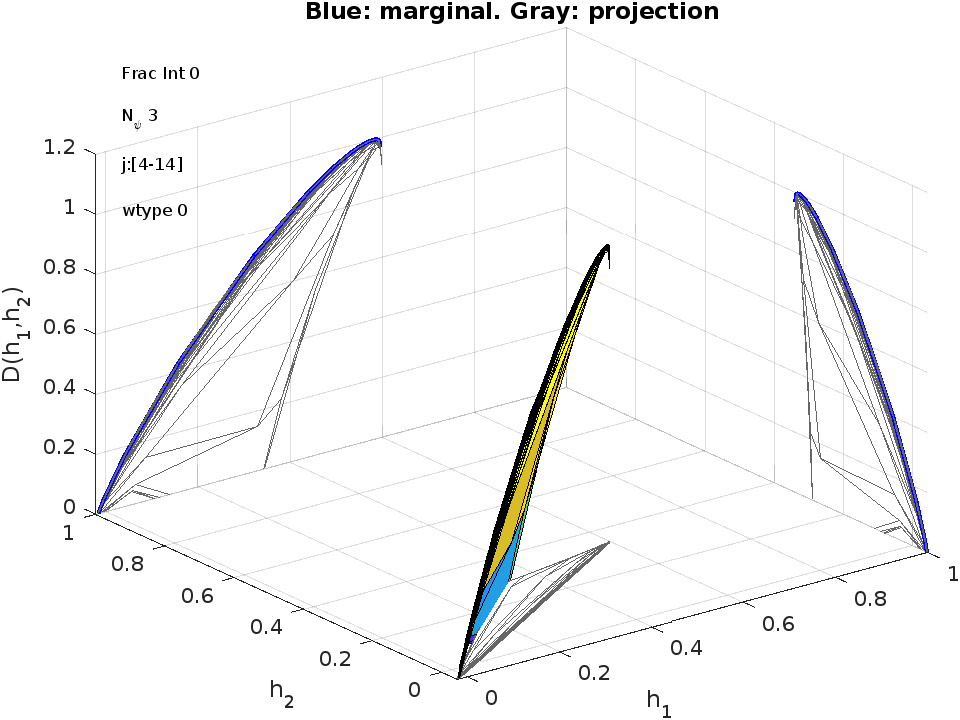}
    \includegraphics[width=2in,height=1.4in]{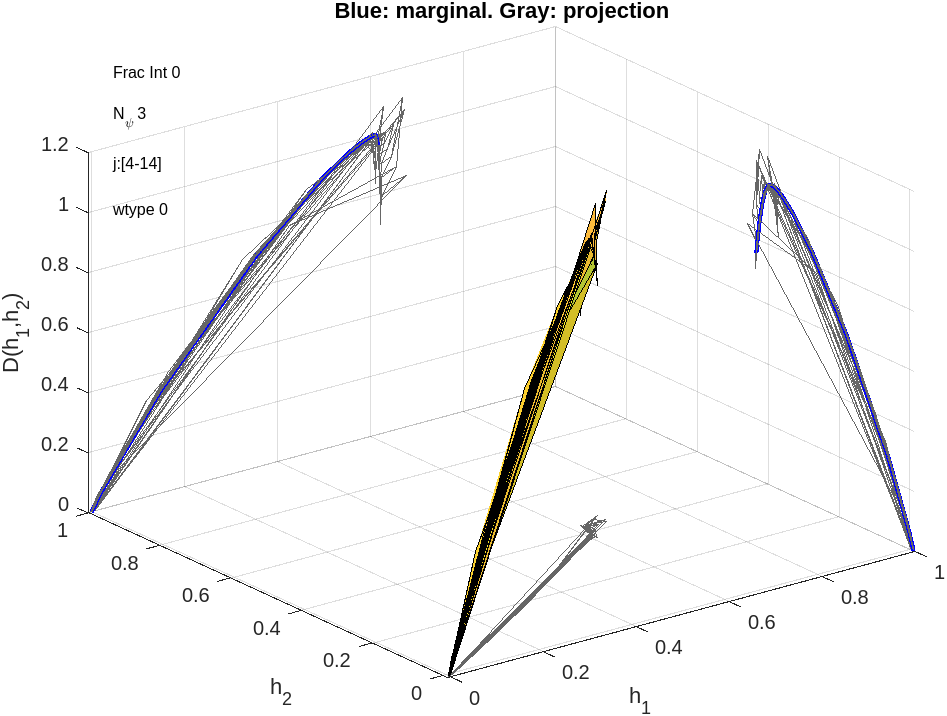}
    \includegraphics[width=2in,height=1.4in]{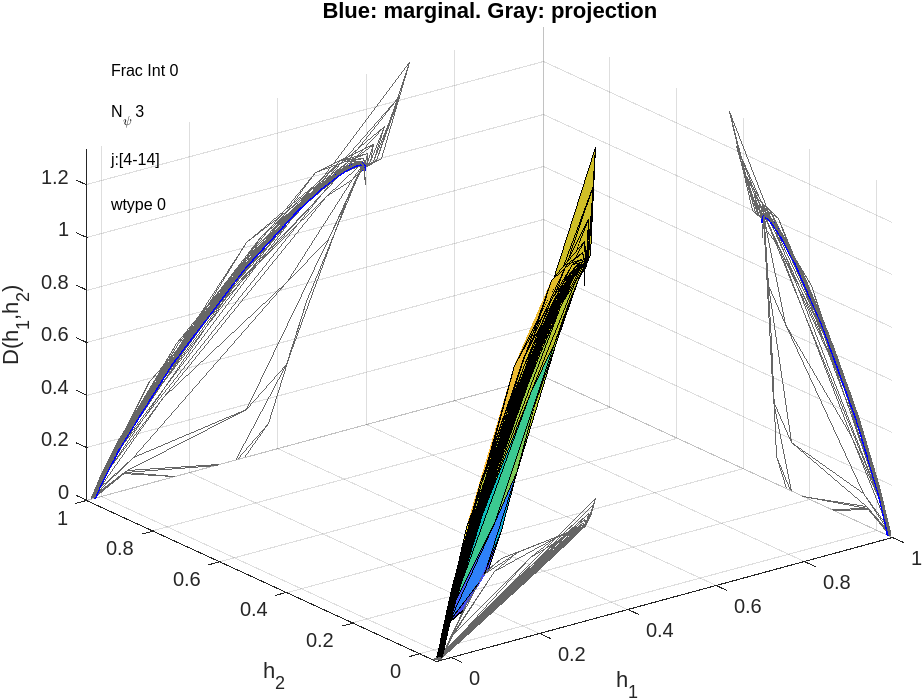}
    \caption{Numerical results of the bivariate multifractal Legendre spectrum $\mathcal{L}_{L_{0.3}^2,L_{0.7}^{2,y}}$ for $y = 1/3, 1/5, 1/2 , 1/4$ based on   oscillations vs.  wavelet leaders ). The left four  bivariate multifractal Legendre spectra are based on order 2 oscillations and the right four are based on wavelet leaders. We also show the projections of the computed bivariate spectra on the planes $H_1 = 0$ and $H_2 =0$  which, theoretically,  yield the corresponding univariate spectra (see \cite{jaffard2019multifractal})}
    \label{matlab_oscillation}
\end{figure}

 Figure \ref{matlab_oscillation1} {\color{blue}(a)(c)} shows the differences between the theoretical and the numerical spectra. It is clear that the numerical results of the bivariate multifractal Legendre spectra, obtained through the oscillations and wavelets, align remarkably well with the anticipated theoretical predictions delineated in equations \eqref{1/3}. This concordance is  depicted in Figure \ref{matlab_oscillation1} {\color{blue}(b)(d)}. Furthermore, it is noteworthy to highlight the error is small between the theoretical expectations and the numerical outcomes. However, it is crucial to acknowledge that the simulation of the "decreasing part" of the Legendre spectra poses a challenge when utilizing the toolbox of Matlab. Consequently, it is not unusual to observe a noticeable marginal error between the theoretical conjectures and the numerical approximations. It must be emphasized that these theoretical results of Legendre spectra as shown in \eqref{1/3} and \eqref{1/2} are derived based on oscillations which, theoretically, cannot be derived if the analysis is based on the wavelets. However, the numerical results of the Legendre spectra can be obtained based on both oscillations and wavelets.
    \begin{figure}[H]
        \centering
            \includegraphics[width=2in,height=1.5in]{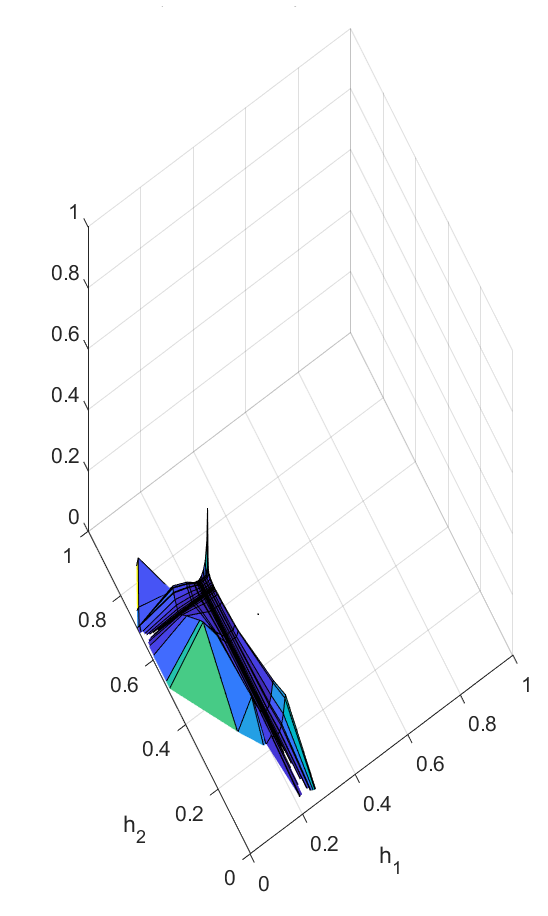}
            \includegraphics[width=2in,height=1.5in]{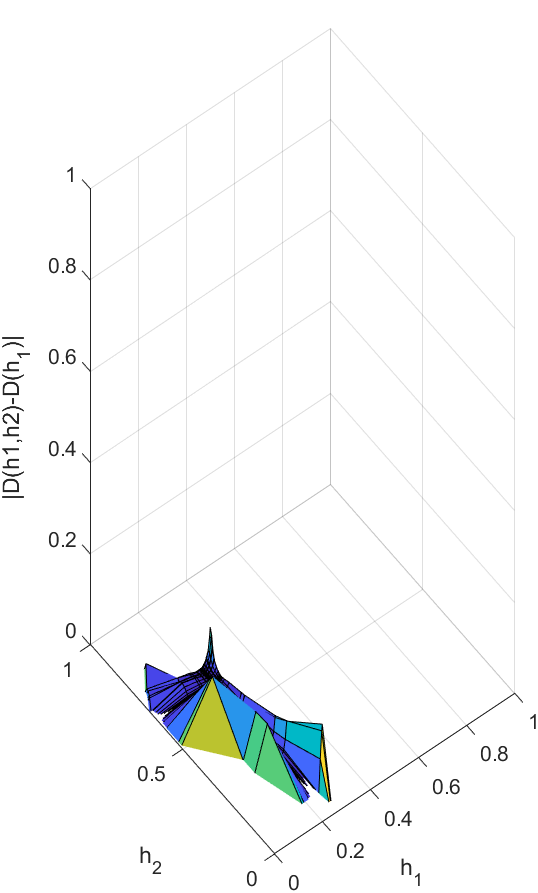}
            \includegraphics[width=2in,height=1.5in]{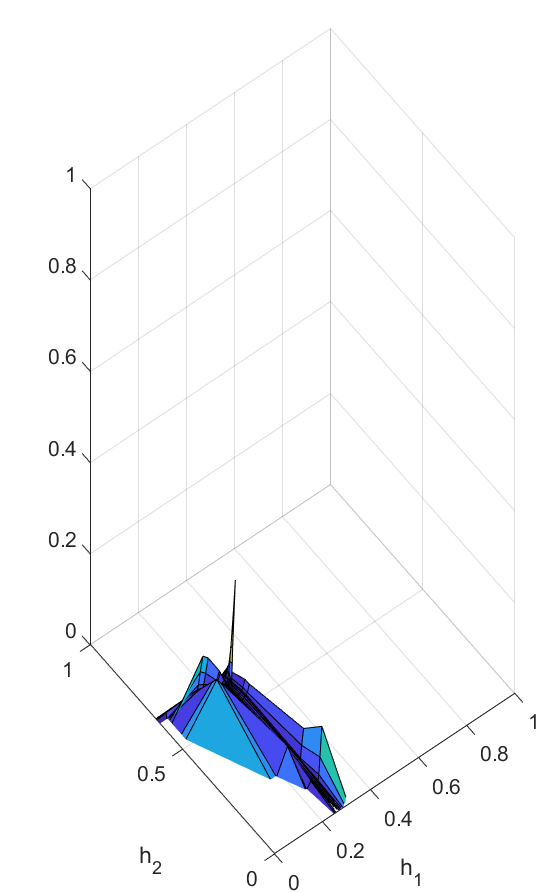}
            \includegraphics[width=2in,height=1.5in]{graph/15_wavelet_difference.png}
        \caption{The left two: the difference between the oscillation Legendre spectrum and the theoretical result based on the oscillations of bivariate Legendre spectra \eqref{1/3}, with $y=1/3$ and $1/5$ respectively. The right two: the difference between the wavelet leader Legendre spectrum and the theoretical result with $y=1/3$ and $1/5$ respectively.}\label{matlab_oscillation1}
    \end{figure}
    
Notably, when $y$ takes values $\frac{1}{3}$ or $\frac{1}{5}$, as depicted in Figure \ref{matlab_oscillation}, the outcomes fit to the "sum of codimension rule" within the region $\frac{H_1}{\alpha_1}+\frac{H_2}{\alpha_2}-1 \geqslant 0$ with $(H_1, H_2) \in [0,\alpha_1]\times [0,\alpha_2]$. The results are in excellent accordance with the theoretical results, as given by \eqref{1/3}, showing the precision of the method we considered. However, it is worth noting the missing part of the Legendre spectrum within the region $\frac{H_1}{\alpha_1}+\frac{H_2}{\alpha_2}-1 < 0$ with $(H_1, H_2) \in [0,\alpha_1]\times [0,\alpha_2]$. The lack of upper bounds for the Legendre spectrum limits its applicability in a way. \\

Our study could be extended in various directions. One of them is to perform the bivariate multifractal analysis of L\'evy functions $L_{\alpha_1}^{b_1}$ and $L_{\alpha_2}^{b_2}$, particularly in the scenario where $b_1$ and $b_2$ are coprime integers. The coprimeness of $b_1$ and $b_2$ introduces additional complexity and richness to the analysis, potentially revealing new insights into the multifractal analysis of these functions. As mentioned by Paul L\'evy himself, the random analogue of L\'evy functions is supplied by L\'evy processes. Therefore another important extension would be to extend the results of the present article to pairs of (independent or shifted) L\'evy processes. 

Another extension concerns  Hecke's function $H_{\alpha}$ which are defined by  
\[ \forall x \in \mathbb{R}, \qquad H_{\alpha}(x)=\frac{\{nx\}}{n^{\alpha}}. \]  They are particular examples of Davenport series, see \cite{jaffard2004davenport}  for which the same questions as those treated in the present two papers can be considered. 

Such studies might give some insight on the key problem of understanding conditions under which the bivariate Legendre spectrum yields an upper bound for the multifractal spectrum.  

Finally, let us mention that Legendre spectra based on oscillations have been considered only in the setting of pointwise regularity supplied by the H\"older exponent. However, it could also be considered in the more general setting of the $p$-exponent, see \cite{jaffard2004wavelet} and references therein for a definition and the multifractal analysis associated with this exponent. In that case, a natural choice for multiresolution quantities would be to consider local $L^p$ norms of finite differences: the { \em $n$-th order $p$-oscillation}  of $f$ on $3 \lambda$ is 
\[ d^{p,n}_\lambda 
=\int\int_{x,\ x+nh \in 3\lambda}|\Delta_f^n(x,h)|^p\; dx \; dh.
 \]
However, the properties of the associated multifractal formalism remain to be explored.

\bibliographystyle{plain}
\bibliography{reference}

\begin{thebibliography}{10}

\bibitem{arneodo1998singularity}
A.~Arneodo, E.~Bacry, S.~Jaffard, and J.~Muzy.
\newblock Singularity spectrum of multifractal functions involving oscillating singularities.
\newblock {\em Journal of {F}ourier {A}nalysis and {A}pplications}, 4:159--174, 1998.

\bibitem{abid2017prevalent}
M.~Ben~Abid.
\newblock Prevalent mixed h{\"o}lder spectra and mixed multifractal formalism in a product of continuous besov spaces.
\newblock {\em Nonlinearity}, 30(8):3332, 2017.

\bibitem{ben2016baire}
M.~Ben~Slimane.
\newblock Baire typical results for mixed {H}{\"o}lder spectra on product of continuous {B}esov or oscillation spaces.
\newblock {\em Mediterranean {J}ournal of {M}athematics}, 13:1513--1533, 2016.

\bibitem{clausel2008quelques}
M.~Clausel.
\newblock {\em Quelques notions d'irr{\'e}gularit{\'e} uniforme et ponctuelle: le point de vue ondelettes}.
\newblock PhD thesis, Universit{\'e} {P}aris-{E}st, 2008.

\bibitem{frisch1980fully}
U.~Frisch and G.~Parisi.
\newblock Fully developed turbulence and intermittency.
\newblock {\em New {Y}ork {A}cademy of {S}ciences, {A}nnals}, 357:359--367, 1980.

\bibitem{jaffard2004davenport}
S.~Jaffard.
\newblock On {D}avenport expansions.
\newblock {\em Proceedings of the {S}ymposium on {P}ure {M}athematics}, 72:273--303, 2004.

\bibitem{jaffard2004wavelet}
S.~Jaffard.
\newblock Wavelet techniques in multifractal analysis.
\newblock In {\em Proceedings of symposia in pure mathematics}, volume~72, pages 91--152, 2004.

\bibitem{Jaffard2015}
S.~Jaffard, P.~Abry, and H.~Wendt.
\newblock {\em Irregularities and Scaling in Signal and Image Processing: Multifractal Analysis}, pages 31--116.
\newblock World scientific publishing, Singapore, 2015.

\bibitem{jaffard2007wavelet}
S.~Jaffard, B.~Lashermes, and P.~Abry.
\newblock Wavelet leaders in multifractal analysis.
\newblock In {\em Wavelet analysis and applications}, pages 201--246. Springer, 2007.

\bibitem{jaffardliaoqian}
S.~Jaffard, L.~Liao, and Zhang Q.
\newblock Multivariate multifractal analysis of {L}\'evy functions. {P}art {I}: {D}etermination of multifractal spectra.
\newblock {\em Preprint available on Arxiv}, 2025.

\bibitem{qianjump}
S.~Jaffard, L.~Liao, and Q.~Zhang.
\newblock Oscillation based multifractal analysis of pure jumped functions.
\newblock {\em Preprint}, 2025.

\bibitem{jaffard2021review}
S.~Jaffard, G.~Sa{\"e}s, W.~Nasr, F.~Palacin, and V.~Billat.
\newblock A review of univariate and multivariate multifractal analysis illustrated by the analysis of marathon runners physiological data.
\newblock In {\em ISAAC {C}ongress ({I}nternational {S}ociety for {A}nalysis, its {A}pplications and {C}omputation)}, pages 3--60. Springer, 2021.

\bibitem{jaffard2019multifractal}
S.~Jaffard, S.~Seuret, H.~Wendt, R.~Leonarduzzi, and P.~Abry.
\newblock Multifractal formalisms for multivariate analysis.
\newblock {\em Proceedings of the {R}oyal {S}ociety {A}}, 475(2229):20190150, 2019.

\bibitem{Frontiers2020}
D.~La~Rocca, H.~Wendt, V.~Van~Wassenhove, P.~Ciuciu, and P~Abry.
\newblock Fractal {C}onnectivity: {R}evisiting functional connectivity for infraslow scale-free brain dynamics using complex wavelet.
\newblock {\em Frontiers in {P}hysiology}, 11:1651, 2021.

\bibitem{lashermes2008comprehensive}
B.~Lashermes, G.~Roux, P.~Abry, and S.~Jaffard.
\newblock Comprehensive multifractal analysis of turbulent velocity using the wavelet leaders.
\newblock {\em The European Physical Journal B}, 61:201--215, 2008.

\bibitem{meneveau1990joint}
C.~Meneveau, K.~Sreenivasan, P.~Kailasnath, and M.~Fan.
\newblock Joint multifractal measures: Theory and applications to turbulence.
\newblock {\em Physical {R}eview A}, 41(2):894, 1990.

\bibitem{seuret2024multivariate}
S.~Seuret.
\newblock On the multivariate multifractal formalism: examples and counter-examples.
\newblock {\em arXiv preprint arXiv:2411.13959}, 2024.

\end{thebibliography}
\end{document}